\newfont{\bssten}{cmssbx10}
\newfont{\bssnine}{cmssbx10 scaled 900}
\newfont{\bssdoz}{cmssbx10 scaled 1200}
\newtheorem{theorem}{Theorem}
\newtheorem{definition}{Definition}
\newtheorem{lemma}{Lemma}
\newtheorem{proposition}{Proposition}
\newtheorem{corollary}{Corollary}
\def\/{\, | \,}
\def\N{{\mathbb N}}
\def\I{{\mathbb I}}
\newcommand{\pae}[1]{\mbox{$\lfloor \kern-1pt #1 \kern-1pt \rfloor$}}
\newcommand{\paep}[1]{\mbox{$\lceil \kern-1pt #1 \kern-1pt \rceil$}}
\def\E{{\mathcal E}}
\def\N{{\mathbb N}}
\def\R{{\mathbb R}}
\def\maV{{\mathcal V}}
\def\maN{{\mathcal N}}
\def\maU{{\mathcal U}}
\def\maE{{\mathcal E}}
\def\v{{\--}}
\def\pv{{\not\!\!\--}}
\newcommand\suite[1]{\left\{#1\right\}_{n\in\N}}
\newcommand\suitek[1]{\left\{#1\right\}_{k\in\N}}
\newcommand\maM{{\mathcal M}}
\newcommand\maI{{\mathcal I}}
\newcommand\maS{{\mathcal S}}
\def\isdef{\triangleq}
\def\E{\mathcal E}
\def\N{\mathbb N}
\def\isdef{\triangleq}
\def\pv{{\not\!\!\--}}
\begin{document}
	
	\title[Stability regions]{Stability regions of systems with compatibilities, and ubiquitous measures on graphs}

	\author[Begeot, Marcovici, Moyal]{Jocelyn Begeot, Irène Marcovici and Pascal Moyal}
	
	\begin{abstract} 
	This paper addresses the ubiquity of remarkable measures on graphs, and their applications. 
	In many queueing systems, it is necessary to take into account the compatibility constraints between users, or between supply 
	and demands, and so on. The stability region of such systems can then be seen as a set of measures on graphs, where the measures under consideration represent the arrival flows to the various classes of users, supply, demands, etc., and the graph represents the compatibilities between those classes. 
	In this paper, we show that these `stabilizing' measures can always be easily constructed as a simple function of a family of weights on the edges of the graph. Second, we show that the latter measures always coincide with invariant measures of random walks on the graph under consideration. 
	\end{abstract}

\maketitle
\section{Introduction}
\label{introduction}
Queueing models with compatibilities have recently gained an increasing interest, both from a theoretical and from an applied standpoint. 
In the original skill-based routing problems, customers and servers are bound by compatibilities constraints, and it is the role of the routing algorithm to determine which customer should be matched to which server upon arrival, see e.g. \cite{TE93}, and the recent survey \cite{CDS20}. 
However, in many applications customers and servers may play symmetric role, such as in peer-to-peer applications, jobs search, dating websites, housing programs, organ transplant allocations, supply and demands, and so on. In all such applications, the system is basically just an interface used by the elements, to be matched, and to depart the system by pairs. 

To account for this variety of applications, in \cite{CKW09} (see also \cite{AdWe}), a variant of skill-based systems was introduced, commonly referred to as `Bipartite matching models' (BM): couples customer/server enter the system at each time point, and servers come and go into the system exactly as customers. A (bipartite) graph, representing the class of users, supplies, demands, etc., determines the compatibility between customers and servers. Each time a couple customer/server is formed, the latter leaves the system right away. The matching policy addressed in these seminal works is `First Come, First Served' (FCFS). 
Interestingly, the stationary state in such cases is shown to enjoy a remarkable product form (see \cite{ABMW17}), that can be generalized to a wider class of systems with compatibilities, such as matching queues implementing the so-called FCFS-ALIS (Assign the Longest Idling Server) service discipline - see e.g. \cite{AW14}, or \cite{AKRW18} for an overview of various extensions of BM models or redundancy models. Let us also mention other approaches to such bipartite models, such as fluid and diffusion limits for systems with compatibility probabilities \cite{BC17}, applications to taxi hubs \cite{BC15} and ride sharing  \cite{OW20}. 
The original settings of \cite{CKW09, AdWe} have then been generalized to the case of non independent arrivals and other matching policies ({\em extended} bipartite matching model - see \cite{BGM13,MBM18}). 

To take into account applications such as assemble-to-order systems, dating website, car sharing and cross-kidney transplants, 
a variant of this model has been more recently introduced, in which arrivals are made one by one, and the compatibility graph is general, i.e. not necessarily bipartite: this is the so-called General stochastic matching model (GM), introduced in \cite{MaiMoy17}. \cite{MBM21} shows that the GM model under the First Come, First Matched matching policy also enjoys a product form in steady form. Then, GM was studied along various angles, among which, fluid limits of a continuous-time variant \cite{MoyPer17}, optimization \cite{NS17} or optimal control \cite{GW14,CBD19}, matching models on hypergraphs \cite{RM21}, on graphs with self-loops \cite{BMMR21}, or models with reneging, see \cite{BDPS11,JMRS20,ADWW21}. Recently, GM models have been shown to share remarkable similarities with order-independent loss queues, see \cite{Com21}.

For all these models, the stability regions are expressed as a set of measures on nodes representing the arrival rates to each class of users, such that the system is positive recurrent. (These measures are probability measures for discrete-time models, and arrival intensities for continuous-time models.) As will be specified below, the conditions on these measures typically take the form of a constraint on the arrival rates to all subsets of nodes, to be less than the arrival rates to the subset of the neighboring nodes - a condition that is reminiscent of the Hall condition for the existence of a perfect matching on a graph, see \cite{Hall35}. In this work, our aim is to characterize exactly these sets of measures. While in such systems, the typical procedure is to construct an optimal control (a Markovian matching policy, for instance) that is able to optimize a given criterion {\em given the arrival rates} (that are thus seen as a 
constraint to the problem), hereafter we somehow reverse this procedure: having fixed a control that is able to achieve stability (the `First Come First Matched' policy for general matching models, the `Match the Longest' policy for bipartite models), we make explicit, the construction procedure of a set of arrival rates rendering the system stable. This aim is natural for practical purposes: being able to explicitly construct a set of arrival rates rendering the considered system stable, provides the feasible admission controls for a given graph topology, and a given Markovian control of the system. 

We obtain two remarkable results: (i) we show that in many cases, the `stabilizing' arrival rates of the considered system can {\em always} be constructed as a simple weighted measure on the edges of the graphs and (ii) we deduce from this, that a stabilizing set of arrival rates always coincide with an invariant measure of a random walk on the graph. As a by-product of the first point, for some graph topologies we are able to 
determine {uniquely} the matching rates of the various edges, {\em independently} on the matching policy. 

Beyond its practical interest for admission control, this ubiquity of the stabilizing measures of matching models draws an insightful correspondance between stochastic matching models and random walks on graphs which, we believe, opens the way for an interesting avenue of research. 

This paper is organized as follows. After some preliminaries in Section \ref{sec:prelim}, in Section \ref{sec:results} we present our main results, establishing various representations of the stabilizing measures of matching models in the cases of non-bipartite multigraphs, of bipartite graphs, and of bipartite skill-based queueing systems, respectively. Proofs for our three main results are provided respectively in Sections \ref{sec:proofmain}, \ref{sec:proofmainBip} and \ref{sec:proofqueueing}. Last, in Section \ref{sec:appli} we show how our results can be applied, to show the insensitivity of the matching rates to the matching policy of the considered matching model.  

\section{Preliminaries}
\label{sec:prelim}

\subsection{General notations}

We denote by $\R$ the set of real numbers, by $\mathbb{R}_+$ the set of non-negative real numbers and by $\mathbb{R}_+^*$ the subset of positive real numbers. Likewise, we denote by $\N$ the set of non-negative integers and by $\N_*$ the subset of positive integers. For $a$ and $b$ in $\mathbb{N}$, we denote by $\llbracket a;b \rrbracket$ the set $[a;b]\cap\mathbb{N}$. 
For any finite set $A$, we denote by $|A|$ the cardinality of $A$ and by $\mathcal{P}(A)$ the set of all the subsets of $A$.
For any set $E$, we denote by $\mathbf 1_E$ the indicator function of $E$. 

The set of positive measures having full support on $A$ is denoted by $\mathcal{M}(A)$, whereas the set of probability measures having full support on $A$ is denoted by $\overline{\mathcal{M}}(A)$. For any $\mu\in\maM(A)$, we denote by $\bar\mu\in\overline{\maM}(A)$, its normalized probability measure, namely $\bar\mu(.)={\mu(.)\over \mu(A)}$. Let us also denote by ${\mathcal{M}^{\ge 0}}(\mathcal{A})$, the set of positive measures on $A$ having support included in $A$. 

\subsection{Multigraphs}

Hereafter, a {\em multigraph} is given by a couple $G=(\maV,\maE)$, where $\maV$ is the (finite) set of nodes and $\maE\subset \maV\times \maV$ is the set of edges. All multigraphs considered hereafter are undirected, that is, $(u,v)\in \maE$ $\implies$ $(v,u)\in \maE$, for all $u,v \in \maV$.  We write $u \v v$ or $v \v u$ for $(u,v) \in \maE$, 
and $u \pv v$ (or $v \pv u$) else. Elements of the form $(v,v) \in \maE$, for $v\in\maV$, are called {\em self-loops}. A multigraph having no self-loop is simply a graph. With respect to the classical notion of multigraphs, we assume hereafter that all the edges are simple. 

For any multigraph $G=(\maV,\maE)$ and any $U\subset\maV$, we denote 
\[
\maE(U) \isdef \{ v \in \maV  :  \exists u \in U, \ u
\-- v\}
\]
the neighborhood of $U$, and for $u\in \maV$, we write for short $\maE(u)\isdef\maE(\{u\})$. 

An {\em independent set} of $G$ is a non-empty subset $\maI \subset \maV$ which does not include any pair of neighbors, i.e.: $\forall (i,j) \in \maI^2, \; i \pv j$. 
We let $\I(G)$ be the set of independent sets of $G$. A graph $G=(\mathcal{V},\mathcal{E})$ is said \textit{bipartite} if the set of its vertex $\mathcal{V}$ can be partitioned into two independent sets 
$\mathcal{V}_1$ and $\mathcal{V}_2$, namely, each edge connects an element of $\mathcal{V}_1$ to an element of $\mathcal{V}_2$. 

Throughout this paper, all considered (multi)graphs are connected, that is, for any $u,v \in \maV$, there exists a subset 
$\{v_0\isdef u, v_1, v_2,\dots, v_p\isdef v\}\subset \maV$ such that $v_i \v v_{i+1}$, for any $i \in\llbracket 0;p-1 \rrbracket$. Moreover, we consider that they have at least two nodes, i.e. $|\mathcal{V}|\geq 2$.

\subsection{Weighted measures on a multigraph}

For any multigraph $G=(\mathcal{V},\mathcal{E})$, we say that a family $\alpha=(\alpha_{i,j})_{(i,j)\in\maE}$ of real numbers is a family of \emph{weights} on $\mathcal{E}$ if for all $(i,j)\in\maE, \; \alpha_{i,j}=\alpha_{j,i}>0$.
Hereafter, a family of weights $\alpha$ on $\mathcal{E}$ will be seen as an element of the set $\mathcal{M}(\mathcal{E})$, and for any such $\alpha$ and any edge $e=(i,j)\in\mathcal{E}$, we will write indifferently $\alpha_e$, $\alpha_{i,j}$ or $\alpha_{j,i}$. 

For any family of weights $\alpha\in{\mathcal{M}}(\maE)$,  
we define the associated positive measure (on nodes) $\mu^\alpha\in \mathcal{M}(\mathcal V)$, by 
\begin{equation}
\label{eq:deflambdaalpha}
\mu^\alpha (i) \isdef \sum\limits_{j\in\mathcal{E}(i)}\alpha_{i,j},\quad i\in\mathcal{V},
\end{equation}
and the associated probability distribution 
$\bar\mu^\alpha:=\overline{\mu^\alpha}\in\overline{\mathcal{M}}(\mathcal{V})$, by 
\begin{equation}
\label{eq:defmualpha}
\bar\mu^\alpha (i) = \frac{\sum\limits_{j\in\mathcal{E}(i)}\alpha_{i,j}}{\sum\limits_{l\in\mathcal{V}}{\sum\limits_{j\in\mathcal{E}(l)}{\alpha}_{l,j}}},\quad i\in\mathcal{V}.
\end{equation}
The (probability) measures $\mu^\alpha$ and $\bar\mu^\alpha$, $\alpha\in{\mathcal{M}}(\maE)$, will be called hereafter  \textit{weighted measures.} 

\newpage
\subsection{Stochastic processes of interest on a multigraph}

\subsubsection{Random walks.}

We say that a stochastic matrix $P=(P(i,j))_{(i,j)\in\maV^2}$ defines a \emph{random walk} on the edges of the multigraph $G=(\maV,\maE)$ if $P(i,j)=0$ as soon as $(i,j)\not\in\maE$. We say furthermore that the random walk is \emph{reversible} if the associated Markov chain admits a reversible invariant measure.

Given a family of weights $(\alpha_{i,j})_{(i,j)\in\maE}$, we introduce the corresponding \emph{weighted random walk}, defined by the stochastic matrix $P^{\alpha}$, with
\[P^\alpha(i,j) = {\alpha_{i,j}  \over \sum_{l\in\maE(i)} \alpha_{i,l}},\quad i\in\maV,\,\quad j\in \maE(i).\]
It is then immediate that $\mu^\alpha$ is a stationary measure for the associated Markov chain. Consequently, as the Markov chain is clearly irreducible on $\maV$, $\bar\mu\alpha$ is the unique stationary probability. 

\subsubsection{General stochastic matching model}
\label{subsec:Stability regions of the general stochastic matching model}

We consider the matching model introduced in \cite{MaiMoy17} (and generalized in \cite{BMMR21} for multi-graphs) for which we 
recall the main definitions, assumptions and stability results. 
For more details, see \cite{MaiMoy17} (for models on general graphs without self-loop), and \cite{BMMR21} for the same model with 
self-loops.  Consider a connected multigraph $G=(\maV,\maE)$. Nodes of $G$, i.e. elements of $\maV$, represent classes of items. 
We say that two items of respective classes $i$ and $j\in\maV$ are {\em compatible} if $(i,j)\in \maE$, that is, $i$ and $j$ are neighbors 
in $G$. Items enter the system one by one in discrete time, and, for all $n\in\N_*$, we denote by $V_n\in \maV$ the class of the item entering 
the system at time $n$. The sequence $\left\{V_n\right\}_{n\in\mathbb{N}_*}$ is supposed to be IID, of common distribution $\bar\mu \in \overline{\maM}(\maV)$. 

Then, upon the arrival of an element, say, of class $i$, we investigate whether there is in line a compatible element 
with the incoming item. If this is the case, then the incoming $i$-item is matched with one of these elements (if there are more than one), following a fixed criterion called {\em matching policy}, and the two matched elements 
leave the system right away. 
The matching policy is denoted by $\Phi$, and 
depends only on the arrival dates of the stored items, their classes, and possibly of a random draw that is independent of everything else 
- we then say that the policy is {\em admissible}. 
For instance, for the \emph{First Come, First Matched} policy $\Phi=\textsc{fcfm}$, the oldest compatible item in line is chosen as the match of $i$. 
See \cite{MaiMoy17,BMMR21} for other examples of admissible matching policies. The system is then fully characterized by the triple 
$(G,\Phi,\bar\mu)$. 

For any fixed connected multigraph $G$, any fixed admissible matching policy $\Phi$ and any fixed probability measure $\bar\mu$, 
the system can be represented by the Markov chain 
$\suite{W_n^{\Phi,\mu}}$, valued in a space denoted by $\mathbb W$.
For any connected multigraph $G=(\mathcal{V},\mathcal{E})$ and any admissible matching policy $\Phi$, we define the {\em stability region} associated to $G$ and $\Phi$ as the set of measures 
\begin{equation*}
\textsc{stab}(G,\Phi) \isdef \left\{\bar\mu \in \overline{\maM}\left(\maV\right)\,: \,\suite{W_n^{\Phi,\bar\mu}} \mbox{ is positive recurrent}\right\}.
\end{equation*}

\subsubsection{Continuous-time matching model} A continuous-time matching model associated to the triple $(G,\Phi,\mu)$, for $\mu\in\maM(\maV)$, is defined exactly as its diecrete-time analog, except that arrivals to each node $i$ are given by a Poisson process of intensity $\mu(i)$, independently of everything else. 

\subsubsection{Extended bipartite stochastic matching model}
\label{subsec:presentation of the bipartite matching model on discrete time}
In this section, we briefly recall the main definitions and results regarding the so-called extended bipartite matching model, as introduced in \cite{BGM13}. This model is a variant of the general stochastic matching model introduced in Section 
\ref{subsec:Stability regions of the general stochastic matching model}. For more details, the reader is referred to \cite{BGM13}. 

We are given a connected bipartite graph $G=(\maV_1\cup\maV_2,\maE)$. 
As above, elements of $\maV_1$ and $\maV_2$ represent classes of items, and we say that two items $i\in\maV_1$ and $j\in\maV_2$ are compatible if there is an edge between $i$ and $j$. 
In an extended bipartite matching model, exactly one element of class in $\maV_1$, and one element of class in $\maV_2$, 
 enter the system at each time point, in other words the arrivals are done {\em two-by-two}. 
 The sequence $\left\{V_n\right\}_{n\in\mathbb{N}_*}$ represents the couples of classes of incoming items; namely, for any $n\in \N_*$, 
$V_n=(i,j)$ for $i\in\maV_1$ and $j\in\maV_2$, means that an element of class $i$ and an element of class $j$ enter the system together at time $n$. The family $\left\{V_n\right\}_{n\in\mathbb{N}_*}$ is supposed to be IID, of generic distribution $\tilde\mu$ on $\maV_1\times\maV_2$ with marginals $\tilde\mu_1$ and $\tilde\mu_2$ respectively. 
We write $\tilde\mu=(\tilde\mu_1,\tilde\mu_2)$ a such distribution. Then, upon the arrival of each couple, say of a couple $(i,j)$, we first investigate whether there is in line an element of class in $\maV_1$ that is compatible with the incoming $j$-item. If this is the case, then the incoming $j$-item is matched with one of these elements (if there are more than one), that is chosen following a given matching policy 
$\Phi$, and the two elements leave the system right away. 
Then we apply the exact same procedure to the incoming $i$-item. If one of the two incoming item did not find a match while the other did, then the element is stored in the buffer. If none of the two did find a match in the buffer, then, either the two incoming items are compatible and then they are matched and leave the system right away, or they are not and they are both stored in the buffer.  
Again, the matching policy $\Phi$ is supposed to be admissible, i.e. to depend only on the arrival dates of the stored items, their classes, and possibly of a random draw independent of everything else. 
For instance, for $\Phi=\textsc{fcfm}$ ('First Come, First Matched'), the oldest compatible item in line is chosen, or, for $\Phi=\textsc{ml}$ ('Match the Longest'), an item 
of the compatible class having the largest number of elements in line is chosen, ties being broken uniformly at random. 
See again \cite{BGM13} for other examples. Altogether, the system is again fully characterized by the triple $(G,\Phi,\tilde\mu)$. 

For a fixed connected bipartite graph $G$, a fixed admissible matching policy $\Phi$ and a fixed probability measure $\tilde\mu$, the bipartite matching model associated to $(G,\Phi,\tilde\mu)$ can be fully represented by a Markov chain $\suite{Y_n^{\Phi,\tilde\mu}}$, that is valued in a state space that we denote by $\mathbb{Y}$.
For any bipartite $G=(\maV_1\cup\maV_2,\maE)$ and any admissible $\Phi$, we define the {\em stability region} associated to $G$ 
and $\Phi$ as the set of measures 
\begin{equation*}
\textsc{stab}_{\textsc{b}}(G,\Phi) \isdef \Bigl\{\tilde\mu \in \overline{\mathcal{M}}\left(\mathcal{V}_1\times\mathcal{V}_2\right): \suite{Y_n^{\Phi,\tilde\mu}} \mbox{ is positive recurrent}\Bigl\}.
\end{equation*}


\section{Results}\label{sec:results}

\subsection{General case}
Our main theorem below presents several equivalences. Most of them were already known, but in order to emphasize on the ubiquitous nature of the measures described in this result, we gather all the results in a single theorem.

\begin{theorem}\label{thm:main}
Let $G=(\maV,\maE)$ be a connected multigraph that is not a bipartite graph, and let $\mu\in\mathcal M(\maV)$. The following properties are equivalent.
\begin{enumerate}
\item[{\it (i)}] The measure $\mu$ is invariant for a reversible random walk on the edges of the multigraph $G$.
\item[{\it (ii)}] The measure $\mu$ is a weighted measure, that is, there exist weights $(\alpha_{i,j})_{(i,j)\in\maE}$ such that for all $i\in\maV$,
$\mu(i) = \mu^\alpha(i)$.  
\item[{\it (iii)}] The measure $\mu$ satisfies: for any non-empty subset $U$ of $\maV$ with $U\not=\maV$, 
$\mu\left(\maU\right) < \mu\left(\maE\left(\maU\right)\right).$
\item[{\it (iv)}] The measure $\mu$ satisfies: for any independent set $\mathcal I$ of $\maV$, 
$\mu\left(\mathcal I\right) < \mu\left(\maE\left(\mathcal I\right)\right).$
\item[{\it (v)}] The measure $\bar\mu$ belongs to $\textsc{stab}(G,\textsc{fcfm})$, that is: 
if items arrive according to the distribution $\bar\mu$, then the general stochastic matching model $(G,\textsc{fcfm},\bar\mu,)$ is stable. 
\item[{\it (vi)}] The continuous time-general stochastic matching model $(G,\textsc{fcfm},\mu)$ 
is stable. 
\end{enumerate}
\end{theorem}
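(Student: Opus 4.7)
The plan is to organize the six equivalences around three clusters—reversibility (i)--(ii), Hall-type inequalities (iii)--(iv), and stability (v)--(vi)—and then bridge them through the pivotal implication (iii) $\Rightarrow$ (ii). The cluster-internal steps are either classical or an easy combinatorial reduction, so the real work lies in the construction of weights from a measure satisfying the Hall-type condition.

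For (i) $\Leftrightarrow$ (ii), I would use the detailed-balance recipe: given a reversible random walk $P$ with invariant $\mu$, set $\alpha_{i,j}\isdef \mu(i)P(i,j)$, which is symmetric by reversibility and strictly positive on every edge, and conversely the weighted walk $P^\alpha$ introduced in Section~\ref{sec:prelim} admits $\mu^\alpha$ as a reversible invariant measure, as checked directly. For (iii) $\Leftrightarrow$ (iv), one direction is immediate; for (iv) $\Rightarrow$ (iii), given any proper non-empty $U\subset \maV$, set $\mathcal I \isdef U\setminus\maE(U)$, the vertices of $U$ with no neighbor inside $U$; this set is independent and satisfies $\maE(\mathcal I)\cap U=\emptyset$ and $\maE(\mathcal I)\subset \maE(U)$. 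Applying (iv) to $\mathcal I$ (or using connectedness of $G$ if $\mathcal I=\emptyset$) together with the trivial inclusion $U\cap\maE(U)\subset \maE(U)$ yields $\mu(U)<\mu(\maE(U))$ by disjointness. Finally, (iii) $\Leftrightarrow$ (v) and (iii) $\Leftrightarrow$ (vi) I would invoke from \cite{MaiMoy17,BMMR21}, together with a standard Poissonization bridging the discrete- and continuous-time models.

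For (ii) $\Rightarrow$ (iv), I would book-keep the edges contributing to $\mu^\alpha(\mathcal I)$ and $\mu^\alpha(\maE(\mathcal I))$. Since $\mathcal I$ is independent and $\mathcal I\cap\maE(\mathcal I)=\emptyset$, every edge between $\mathcal I$ and $\maE(\mathcal I)$ is counted once on each side and cancels in the difference; what remains on the right side is the positive contribution of edges internal to $\maE(\mathcal I)$, of edges leaving $\maE(\mathcal I)$ toward $\maV\setminus(\mathcal I\cup\maE(\mathcal I))$, and of self-loops inside $\maE(\mathcal I)$. If none of these were present, connectedness would force $\maV=\mathcal I\cup\maE(\mathcal I)$, and $(\mathcal I,\maE(\mathcal I))$ would form a bipartition of $G$, contradicting the non-bipartiteness hypothesis; hence the difference is strictly positive.

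The main obstacle is (iii) $\Rightarrow$ (ii): explicitly exhibiting edge-weights whose vertex-sums reproduce $\mu$. The plan is to route through the already-established implication (iii) $\Rightarrow$ (v), and, for each edge $\{i,j\}$, to define $\alpha_{i,j}$ as the long-run stationary rate at which items of classes $i$ and $j$ are matched in the $\textsc{fcfm}$ model driven by $\bar\mu$. Symmetry is built-in, and $\mu(i)\propto\sum_{j\in\maE(i)}\alpha_{i,j}$ follows from the fact that in steady state each class-$i$ arrival is matched exactly once. The delicate point, which I expect to be the real difficulty, is to show $\alpha_{i,j}>0$ on every edge; I would argue by contradiction, showing that an unused edge would saturate a Hall-type inequality in (iii) on a carefully chosen subset and hence contradict its strict form. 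As a backup I would have a Farkas/LP-duality argument on the vertex--edge incidence matrix: on a connected non-bipartite multigraph this matrix has full row rank—an ``odd-cycle'' degree of freedom absent in the bipartite case—so the affine solution set of $A\alpha=\mu$ is non-empty, and the strict Hall inequalities in (iii) ensure that this affine set meets the open positive orthant.
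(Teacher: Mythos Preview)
Your overall architecture matches the paper's: (i)$\Leftrightarrow$(ii) via detailed balance, (iii)$\Leftrightarrow$(iv) by the same reduction (your $\mathcal I=U\setminus\maE(U)$ is exactly the paper's $\maU_N\cap(\maE(\maU))^c$), (iv)$\Leftrightarrow$(v)$\Leftrightarrow$(vi) by citation, and (ii)$\Rightarrow$(iv) by the edge-bookkeeping/non-bipartite argument. The interesting divergence is the hard direction (iv)$\Rightarrow$(ii). Your primary route---stability then matching rates as weights---is the paper's \emph{alternative} proof (Section~\ref{subsec:alternate}); its \emph{primary} proof is Farkas, your backup. Both routes therefore appear, just with reversed priority.

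Two substantive remarks on that hard direction. First, the positivity of the matching rates $\alpha_{i,j}$ that you flag as ``the real difficulty'' is genuinely delicate; the paper's alternative proof simply asserts $\alpha^{\Phi,\bar\mu}\in\mathcal M(\maE)$ without arguing it, so your contradiction idea (an unused edge forces a saturated Hall inequality) would actually fill a gap rather than reproduce an existing step. Second, your Farkas sketch is too loose: full row rank of the incidence matrix only says $A\alpha=\mu$ is solvable, not that it is solvable in the open positive cone, and ``the strict Hall inequalities ensure the affine set meets the positive orthant'' is exactly what needs proving. The paper's argument is sharper and worth knowing: it shows the \emph{closed} inclusion $\{\mu\text{ satisfies }(iv)\}\subset\{\mu=\mu^\alpha:\alpha\ge 0\}$ and then passes to interiors (the left side is already open), thereby dodging strict positivity entirely. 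The closed inclusion is obtained by contraposition: if $Ax=\mu,\ x\ge 0$ has no solution, Farkas gives a dual vector $y$ with $y(i)+y(j)\ge 0$ on each edge and $\sum_i\mu(i)y(i)<0$; choosing $y$ so that $|y|$ takes as few distinct values as possible, one extracts an independent set $\maV_-=\{y=-\max|y|\}$ with $\maE(\maV_-)\subset\maV_+=\{y=+\max|y|\}$ and $\mu(\maV_-)\ge\mu(\maV_+)$, contradicting~(iv). This ``level-set'' analysis of $y$ is the piece your backup plan is missing.
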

Theorem \ref{thm:main} is proven in Section \ref{sec:proofmain}. 
For any given connected multigraph $G=(\mathcal{V},\mathcal{E})$, defining the sets 
\begin{align*}
\mathcal W(G) &\isdef\left\{ \mu^\alpha : \alpha\in\mathcal{M}(\maE)\right\}\subset \maM(\maV),\\
\mathcal{N}(G) &\isdef \left\{\mu \in \maM\left(\maV\right):\forall \,\mathcal U\in\mathcal P(\maV)\setminus\{\emptyset,\maV\},\; \mu\left(\maU\right) < \mu\left(\maE\left(\maU\right)\right)\right\}.
\end{align*}
the equivalence $(ii)\iff (iii)$ can be reformulated as 

\begin{equation}
\label{eq:setequality}
\mathcal N(G) = \mathcal W(G),\mbox{ for any $G$ that is not a bipartite graph.}
\end{equation}
This result has an immediate practical interest for admission control, in the 
wide class of systems described above: it makes precise the exact set of 
arrival rates (or probabilities) for which the corresponding system can be stabilizable, and provides a simple way 
of constructing such arrival rates, and so to calibrate the admission control of the corresponding arrivals into the system. For this, it is necessary and sufficient to set any family $\alpha$ of weights on the edges of $G$, and then to set $\mu\in\mathcal N(G)$ (resp., $\bar\mu\in\overline{\mathcal N}(G)$) in function of $\alpha$ according to (\ref{eq:deflambdaalpha}) (resp., (\ref{eq:defmualpha})). 
Moreover, an explicit representation of the family of weights 
$\alpha$ such that $\mu=\mu^\alpha$, is provided in the proof of Theorem \ref{thm:main}, 
as a simple function of the matching rates of a related stochastic matching model, 
see Corollary \ref{cor:stabpoids} and (\ref{eq:defTheta}). 

\subsection{Bipartite case} 
Clearly, ${\maN}(G)$ is empty whenever $G$ is bipartite, of bipartition, say, 
$\maV=\maV_1 \cup \maV_2$. Indeed, for any $\mu$ in the latter, we we would have 
$\mu(\maV_1)=\mu\left(\maE(\maV_2)\right) > \mu\left(\maV_2\right)$, 
and symmetrically $\mu(\maV_2) > \mu(\maV_1)$. Specifically, from Proposition 4.2 in \cite{BMMR21} and 
the equivalence $(iii) \iff (iv)$, we obtain that the set $\mathcal N(G)$ is non-empty if and only if $G$ is not a bipartite graph. 

In fact, as is observed in the Introduction, the case where $G=(\mathcal{V},\mathcal{E})$ is a bipartite graph is relevant in various applications. 
Hereafter, for a bipartite graph $G=(\mathcal{V}_1\cup\maV_2,\mathcal{E})$ and a measure $\mu\in\maM(\maV)$, let us define the measures 
$\tilde\mu_1\in\overline\maM(\maV_1)$ and $\tilde\mu_2\in\overline\maM(\maV_2)$, by 
\begin{equation}
\label{eq:deftildemu}
\tilde\mu_1(i)={\mu(i)\over \mu(\maV_1)},\,i\in\maV_1\quad\mbox{ and }\quad\tilde\mu_2(j)={\mu(j)\over \mu(\maV_2)},\,j\in\maV_2,
\end{equation}
and by $\tilde\mu$, the probability measure of $\overline{\maM}(\maV_1\times\maV_2)$ having first (resp., second) marginal $\tilde\mu_1$ (resp., $\tilde\mu_2$). We have the following counterpart of Theorem \ref{thm:main} in the bipartite case, 
\begin{theorem}\label{thm:mainBip}
Let $G=(\maV,\maE)$ be a bipartite graph, and let $\mu\in\mathcal M(\maV)$. The following properties are equivalent.
\begin{enumerate}
\item[{\it (i)}] The measure $\mu$ is invariant for a reversible random walk on the edges of $G$.
\item[{\it (ii)}] The measure $\mu$ is a weighted measure, that is, there exist weights $(\alpha_{i,j})_{(i,j)\in\maE}$ such that for all $i\in\maV$,
$\mu(i) = \mu^\alpha(i)$.  
\item[{\it (iii)}] We have $\mu\left(\maU\right) < \mu\left(\maE\left(\maU\right)\right)$ for any non-empty subset $U$ of $\maV$ that is different from $\maV,\maV_1$ and $\maV_2$. 
\item[{\it (iv)}] The measure $\tilde\mu$ belongs to $\textsc{stab}(G,\textsc{ml})$, that is: 
if couple of items arrive according to the distribution $\tilde\mu$, then the extended bipartite matching models 
$(G,\textsc{ml},\tilde\mu)$ is stable. 
\item[{\it (v)}] The continuous-time bipartite matching queue of arrival intensities $\mu$ on servers and service intensities $\mu$ on customer nodes, 
of service discipline FCFS-ALIS, is stable. 
\end{enumerate}
\end{theorem}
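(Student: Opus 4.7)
The plan is to establish the five properties as equivalent by a cyclic chain of implications, mirroring the structure of the proof of Theorem \ref{thm:main} and adapting each step to the bipartite setting. The equivalence (i) $\iff$ (ii) transfers verbatim, since bipartiteness plays no role: given weights $\alpha$, the walk $P^\alpha$ satisfies the detailed-balance relation $\mu^\alpha(i)P^\alpha(i,j)=\alpha_{i,j}=\alpha_{j,i}=\mu^\alpha(j)P^\alpha(j,i)$, so $\mu^\alpha$ is reversibly invariant; conversely, a reversible invariant $\mu$ for some $P$ yields symmetric weights $\alpha_{i,j}:=\mu(i)P(i,j)$ with $\mu^\alpha=\mu$.

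For (ii) $\Rightarrow$ (iii), I would fix $\mu=\mu^\alpha$ and $U\subset\maV$ with $U\notin\{\emptyset,\maV,\maV_1,\maV_2\}$. Every edge incident to $U$ contributes $\alpha_e$ to both $\mu(U)$ and $\mu(\maE(U))$, so $\mu(U)\le\mu(\maE(U))$; strict inequality follows from the existence of at least one edge between $\maV\setminus U$ and $\maE(U)$, which a short case analysis on the decomposition $U=(U\cap\maV_1)\cup(U\cap\maV_2)$ together with the connectedness of $G$ supplies whenever $U$ is neither a full bipartition class nor $\maV$. The converse (iii) $\Rightarrow$ (ii) is the technical heart; my plan is to reduce it to Theorem \ref{thm:main} by passing to an auxiliary non-bipartite multigraph $G'$, for instance by attaching suitable self-loops (allowed in the multigraph framework of \cite{BMMR21}), verifying the hypotheses of Theorem \ref{thm:main} for an extension of $\mu$ to $G'$, and then recovering the weights on $G$ by absorbing or removing the auxiliary edge contributions; a more direct alternative would be an LP-duality/max-flow argument in the spirit of the strong version of Hall's theorem.

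The equivalences with (iv) and (v) are obtained by invoking stability criteria from the literature: (iii) $\iff$ (iv) rests on the characterization in \cite{BGM13} of $\textsc{stab}_{\textsc{b}}(G,\textsc{ml})$ via Hall-type strict inequalities on the marginals $\tilde\mu_1,\tilde\mu_2$, which under the normalization $\tilde\mu_k(\cdot)=\mu(\cdot)/\mu(\maV_k)$ recast exactly as the subset inequalities of (iii); (iii) $\iff$ (v) is analogous, via the stability criterion for FCFS-ALIS bipartite queues. The loop closes via (iv) $\Rightarrow$ (ii) by taking, in steady state, the positive matching rates $(\alpha_{i,j})_{(i,j)\in\maE}$ of the positive-recurrent chain under $\textsc{ml}$: these form a symmetric family of edge-weights with $\mu^\alpha=\mu$.

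The main obstacle is the construction step (iii) $\Rightarrow$ (ii). Any weighted measure on a bipartite graph automatically satisfies the half-edge identity $\mu^\alpha(\maV_1)=\mu^\alpha(\maV_2)$, which is precisely why $\maV_1$ and $\maV_2$ are excluded from (iii). The challenge is therefore to engineer the reduction to Theorem \ref{thm:main} (or the LP argument) so that it respects this identity without imposing any spurious relations between $\mu(\maV_1)$ and $\mu(\maV_2)$; unlike the non-bipartite case, where weights can be perturbed along odd cycles to adjust the marginals, there are no odd cycles in $G$, so all of the missing flexibility must be supplied by the auxiliary construction.
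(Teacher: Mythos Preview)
Your overall architecture matches the paper's: the paper also takes $(i)\iff(ii)$ verbatim from Theorem~\ref{thm:main}, proves $(ii)\Rightarrow(iii)$ directly, obtains $(iii)\iff(iv)$ from Theorem~7.1 of \cite{BGM13} and $(iv)\iff(v)$ from \cite{AW14}, and closes the cycle via $(iv)\Rightarrow(ii)$ using the asymptotic matching rates $\Theta^{\textsc{ml},\tilde\mu}_{\textsc b}[i,j]$ as weights (Corollary~\ref{coroll : NcondB(G) inclus dans l'ensemble des produits tensoriels de mesures a poids (cas biparti)}). In particular, the paper does \emph{not} attempt a direct $(iii)\Rightarrow(ii)$; your matching-rate closure is exactly its route, so you can drop the auxiliary-multigraph plan altogether.

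That said, there is a genuine gap in how you handle the balance condition. As you note, any $\mu^\alpha$ on a bipartite $G$ satisfies $\mu^\alpha(\maV_1)=\mu^\alpha(\maV_2)$. But condition $(iii)$ as literally stated does \emph{not} force this: on the path $1\text{--}2\text{--}3$ with $\maV_1=\{1,3\}$, $\maV_2=\{2\}$, the measure $\mu(1)=\mu(3)=1$, $\mu(2)=3$ satisfies all the strict inequalities of $(iii)$ yet is not a weighted measure. The paper resolves this by interpreting $(iii)$ as membership in $\mathcal N_{\textsc b}(G)$ (see \eqref{eq:defNbG}), which carries the extra clause $\mu(\maV_1)=\mu(\maV_2)$; this equality is then used explicitly in the passage from $\tilde\mu_k=\tilde\mu_k^\alpha$ back to $\mu=\mu^{\check\alpha}$ in the proof of $(iv)\Rightarrow(ii)$. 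Your self-loop reduction cannot supply this: adding a self-loop at a single vertex $v\in\maV_1$ leaves $\maV_2$ independent in $\hat G$, forcing $\mu(\maV_2)<\mu(\maV_1)$, while adding loops on both sides produces weights on $\hat G$ whose restriction to $G$ gives $\mu^\alpha\prec\mu$ on the looped vertices rather than equality. So the ``missing flexibility'' you seek is not obtainable from an auxiliary construction; it is precisely the balance hypothesis, and you should invoke it (as the paper does) rather than try to manufacture it.
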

Define, for any connected bipartite graph $G=(\maV_1\cup \maV_2,\maE)$, 
the set 
\begin{equation}
\label{eq:defNbG}
\mathcal{N}_{\textsc{b}}(G)\isdef
\left\{\mu \in \maM\left(\maV\right) : 
\left\{
    \begin{array}{ll}
        \forall \,\mathcal U\in\mathcal P(\maV)\setminus\{\emptyset,\maV,\maV_1,\maV_2\},\; 
        \mu\left(\maU\right) < \mu\left(\maE\left(\maU\right)\right) \\
        \mu(\maV_1) = \mu(\maV_2)
    \end{array}
\right.
\right\}.\end{equation}
Then, similarly to (\ref{eq:setequality}) the equivalence $(ii)\iff (iii)$ can be reformulated as 
\begin{equation}
\label{eq:setequalityBip}
\mathcal N_{\textsc{b}}(G) = \mathcal W(G),\mbox{ for any bipartite graph }G,
\end{equation}
a result that, again, can be easily exploited to the admission control of bipartite stochastic matching models, and matching queues: 
it is necessary and sufficient to set any family $\alpha$ of weights on the edges of $G$, 
$\mu^\alpha\in{\mathcal N}_{\textsc{b}}(G)$ in function of $\alpha$ according to (\ref{eq:deflambdaalpha}), and then 
to deduce $\tilde \mu^\alpha$ from $\mu^\alpha$ as in (\ref{eq:deftildemu}) to obtain an extended bipartite matching model that is stabilizable 
by the policy {\sc ml}, or a matching queue that is stabilizable by FCFS-ALIS. 
Moreover, as above an explicit representation of the family of weights 
$\alpha$ is provided 
as a simple function of the matching rates of the related extended bipartite model, 
see (\ref{eq:defThetaBip}), and Corollary \ref{coroll : NcondB(G) inclus dans l'ensemble des produits tensoriels de mesures a poids (cas biparti)}.
The proof of Theorem \ref{thm:mainBip} is provided in Section \ref{sec:proofmainBip}. 

\subsection{Asymmetric case}
In many applications, it is relevant to create an asymmetry between two sets of nodes. 
For a connected graph $G=(\maV,\maE)$, let $\maV_1\cup \maV_2$ be a (non-trivial) partition of $\maV$, 
and consider now the following set of measures,
$${\mathcal N}_{\maV_1}(G)=\left\{\mu\in\mathcal{M}(\mathcal{V}) : \forall \,\mathcal U_1\in \mathcal P(\maV_1)\setminus\{\emptyset\}, \; \mu(\maU_1) < \mu(\maE(\maU_1))\right\}.$$
The condition $\mu\in {\mathcal N}_{\maV_1}(G)$ is shown to be sufficient for the stability of a general stochastic matching model on a graph, ruled by a Max-Weight policy, in which nodes having classes in $\maV_2$ have a finite lifetime in the system, see Theorem 3.2 in \cite{JMRS20}. As will be demonstrated below, it also coincides with the stability region of a wide range of applications to skill-based queueing systems. The following result states that it can also be represented, and explicitly constructed, using weighted measures on $G$, 
\begin{theorem}\label{thm:mainqueue}
For any connected graph $G=(\maV,\maE)$ and any non-trivial partition $\maV_1\cup \maV_2$ of $\maV$, the set ${\mathcal N}_{\maV_1}(G)$ coincides with the set
$$\mathcal W_{\prec,\maV_2}(G) \isdef 
\left\{\mu\in\mathcal{M}(\mathcal{V}) : \exists \alpha\in{\mathcal{M}}(\maE) \; \text{s.t.} \;
\left\{
    \begin{array}{ll}
        \forall i\in\mathcal{V}_1, \;  \mu^\alpha(i) = \mu(i) \\
        \forall j\in\mathcal{V}_2,\; \mu^\alpha(j) < \mu(j)
    \end{array}
\right.
\right\}.$$
\end{theorem}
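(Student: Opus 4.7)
To prove Theorem \ref{thm:mainqueue}, I will prove the two inclusions separately. For the direct inclusion $\mathcal{W}_{\prec,\maV_2}(G)\subseteq \mathcal{N}_{\maV_1}(G)$, fix $\alpha\in\mathcal{M}(\maE)$ realizing $\mu$ and a non-empty $\mathcal{U}_1\subset\maV_1$. The key observation is the edge-by-edge comparison $|e\cap\maE(\mathcal{U}_1)|\geq|e\cap\mathcal{U}_1|$, valid for every $e\in\maE$, which gives $\mu^\alpha(\maE(\mathcal{U}_1))\geq\mu^\alpha(\mathcal{U}_1)$; combined with $\mu\geq\mu^\alpha$ on $\maV_2$ (strict), this yields $\mu(\maE(\mathcal{U}_1))\geq\mu(\mathcal{U}_1)$. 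The strict inequality is immediate when $\maE(\mathcal{U}_1)\cap\maV_2\neq\emptyset$; otherwise $\maE(\mathcal{U}_1)\subset\maV_1$, so $\mathcal{U}_1\cup\maE(\mathcal{U}_1)\subsetneq\maV$ (using $\maV_2\neq\emptyset$), and connectedness of $G$ produces an edge $e=\{v,w\}$ with $v\in\maE(\mathcal{U}_1)\setminus\mathcal{U}_1$ and $w\notin\mathcal{U}_1\cup\maE(\mathcal{U}_1)$ (the alternative $v\in\mathcal{U}_1$ would force $w\in\maE(\mathcal{U}_1)$, contradicting the choice of $w$); such an edge witnesses $|e\cap\maE(\mathcal{U}_1)|-|e\cap\mathcal{U}_1|\geq 1$, upgrading the weak inequality to a strict one.

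For the reverse inclusion $\mathcal{N}_{\maV_1}(G)\subseteq\mathcal{W}_{\prec,\maV_2}(G)$, the plan is to combine an LP feasibility step via Farkas' lemma with a perturbation. Let $N\in\{0,1\}^{\maV\times\maE}$ denote the vertex-edge incidence matrix of $G$. I will first establish existence of some $\alpha\geq 0$ with $(N\alpha)(i)=\mu(i)$ on $\maV_1$ and $(N\alpha)(j)\leq\mu(j)$ on $\maV_2$. By LP duality, this feasibility is equivalent to the statement that, for every $y\in\R^{\maV}$ with $y|_{\maV_2}\geq 0$ and $y(u)+y(v)\geq 0$ on each edge $\{u,v\}\in\maE$, one has $\sum_{v\in\maV}y(v)\mu(v)\geq 0$. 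The crux is a layer-cake computation: setting $A_t=\{v:y(v)<-t\}$ and $B_t=\{v:y(v)>t\}$ for $t\geq 0$, the sign constraint forces $A_t\subset\maV_1$ and the edge constraint forces $\maE(A_t)\subset B_t$. The hypothesis $\mu\in\mathcal{N}_{\maV_1}(G)$ then gives $\mu(A_t)\leq\mu(\maE(A_t))\leq\mu(B_t)$ for every $t\geq 0$, and integrating in $t$ yields $\sum_{v\in\maV}y(v)\mu(v)=\int_0^\infty(\mu(B_t)-\mu(A_t))\,dt\geq 0$, as required.

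To upgrade the resulting $\alpha\geq 0$ to a strictly positive weight family that also produces strict inequality on $\maV_2$, I will apply the above Farkas step not to $\mu$ but to the shrunk measure $\tilde\mu$ defined by $\tilde\mu(i)=\mu(i)-\epsilon\deg(i)$ on $\maV_1$ and $\tilde\mu(j)=\mu(j)-\epsilon\deg(j)-\delta$ on $\maV_2$, with $\epsilon,\delta>0$ chosen small enough so that $\tilde\mu$ remains positive and still satisfies $\tilde\mu(\mathcal{U}_1)<\tilde\mu(\maE(\mathcal{U}_1))$ for every non-empty $\mathcal{U}_1\subset\maV_1$; such a choice exists because the defect $\mu(\maE(\mathcal{U}_1))-\mu(\mathcal{U}_1)$ is bounded below by a positive constant as $\mathcal{U}_1$ ranges over the finite family of non-empty subsets of $\maV_1$. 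The Farkas step produces $\alpha'\geq 0$ adapted to $\tilde\mu$, and setting $\alpha_e:=\alpha'_e+\epsilon$ for every $e\in\maE$ gives weights $\alpha>0$ with $\mu^\alpha=\mu$ on $\maV_1$ and $\mu^\alpha\leq\mu-\delta<\mu$ on $\maV_2$, as required. The delicate point of the argument is the layer-cake/water-level computation inside the Farkas step; the edge-level comparison in the easy direction and the final perturbation are then direct verifications.
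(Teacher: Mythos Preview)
Your proof is correct, but it follows a genuinely different route from the paper's.

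The paper proves Theorem~\ref{thm:mainqueue} by reduction to Theorem~\ref{thm:main}: it adds a self-loop at every vertex of $\maV_2$ to obtain a multigraph $\hat G$, checks that $\mathcal N_{\maV_1}(G)=\mathcal N(\hat G)$, invokes Theorem~\ref{thm:main} to get $\mathcal N(\hat G)=\mathcal W(\hat G)$, and then observes that a weight family on $\hat G$ is exactly a weight family on $G$ together with positive self-loop weights that absorb the slack on $\maV_2$, i.e.\ $\mathcal W(\hat G)=\mathcal W_{\prec,\maV_2}(G)$. (A second proof, via max-flow/min-cut, is given only in the bipartite case.)

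You instead attack the asymmetric problem directly. For the easy inclusion you use the edgewise inequality $|e\cap\maE(\mathcal U_1)|\ge|e\cap\mathcal U_1|$, which is essentially the same mechanism as the paper's argument for $(ii)\Rightarrow(iv)$ in Theorem~\ref{thm:main}, and your connectedness argument for strictness when $\maE(\mathcal U_1)\subset\maV_1$ is correct. For the hard inclusion your Farkas/LP-duality step with the level-set (layer-cake) identity
\[
\sum_{v}y(v)\mu(v)=\int_0^\infty\bigl(\mu(B_t)-\mu(A_t)\bigr)\,dt
\]
is a clean alternative to the paper's combinatorial analysis of a dual certificate (choosing $y$ with $|y|$ taking as few values as possible) inside the proof of Theorem~\ref{thm:main}. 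Your final $\epsilon,\delta$ perturbation to pass from $\alpha\ge 0$ with weak slack to $\alpha>0$ with strict slack is a direct verification and plays the same role as the self-loop weights in the paper's reduction. In short: the paper's route emphasizes the structural identity $\mathcal N_{\maV_1}(G)=\mathcal N(\hat G)$ and reuses Theorem~\ref{thm:main} as a black box, whereas your route is self-contained, applies to general connected $G$ without passing through multigraphs, and the layer-cake computation gives a somewhat slicker handling of the Farkas certificate than the discrete case analysis in the paper.
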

In Section~\ref{sec:proofqueueing}, two different proofs of Theorem~\ref{thm:mainqueue} are given. One using Theorem~\ref{thm:main} whereas the other is self-content and involves the so-called max-flow/min-cut theorem used in flow network theory. 
Whenever the restriction of the measure to the subset $\maV_2$ is fixed (say, equal to $\gamma$), the following result immediately follows, 
\begin{corollary}
\label{cor:mainqueue}
For any connected graph $G=(\maV,\maE)$ and any non-trivial partition $\maV_1\cup \maV_2$ of $\maV$, for any $\gamma\in \maM^+(\maV_2)$, 
the sets $${\mathcal N}_{\maV_1,\gamma}(G)\isdef\left\{\mu\in\mathcal{M}(\maV_1) : \forall \,\mathcal U_1\in \mathcal P(\maV_1)\setminus\{\emptyset\}, \; \mu(\maU_1) < \gamma(\maE(\maU_1))\right\}$$
and
$$\left\{\mu\in\mathcal{M}(\mathcal{V}_1) : \exists \alpha\in{\mathcal{M}}(\maE) \; \text{s.t.} \;
\left\{
    \begin{array}{ll}
        \forall i\in\mathcal{V}_1, \;  \mu^\alpha(i)=\mu(i) \\
        \forall j\in\mathcal{V}_2,\; \mu^\alpha(j) < \gamma(j)
    \end{array}
\right.
\right\}$$
coincide. 
\end{corollary}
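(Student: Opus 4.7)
The plan is to derive Corollary~\ref{cor:mainqueue} from Theorem~\ref{thm:mainqueue} by combining $\mu$ on $\maV_1$ and the fixed $\gamma$ on $\maV_2$ into a single measure $\nu\in\maM(\maV)$ via $\nu|_{\maV_1}=\mu$, $\nu|_{\maV_2}=\gamma$; with $\mu$ and $\gamma$ each of full support on their respective part, $\nu$ has full support on $\maV$.

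The second set of the corollary then corresponds, via $\mu\mapsto\nu$, to the slice $\{\nu\in\mathcal W_{\prec,\maV_2}(G):\nu|_{\maV_2}=\gamma\}$, because the constraints $\mu^\alpha(i)=\mu(i)$ on $\maV_1$ and $\mu^\alpha(j)<\gamma(j)$ on $\maV_2$ read verbatim as $\mu^\alpha|_{\maV_1}=\nu|_{\maV_1}$ and $\mu^\alpha|_{\maV_2}<\nu|_{\maV_2}$. By Theorem~\ref{thm:mainqueue}, $\mathcal W_{\prec,\maV_2}(G)=\mathcal N_{\maV_1}(G)$, and since $\gamma$ is supported on $\maV_2$ one unfolds $\nu(\maE(\maU_1))=\mu(\maE(\maU_1)\cap\maV_1)+\gamma(\maE(\maU_1))$ to rewrite the second set of the corollary as
\[\bigl\{\mu\in\maM(\maV_1)\,:\,\mu(\maU_1)<\mu(\maE(\maU_1)\cap\maV_1)+\gamma(\maE(\maU_1))\text{ for every }\emptyset\neq\maU_1\subset\maV_1\bigr\}.\]

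It remains to match this with $\mathcal N_{\maV_1,\gamma}(G)$, defined by the stronger inequality $\mu(\maU_1)<\gamma(\maE(\maU_1))$. The inclusion $\mathcal N_{\maV_1,\gamma}(G)\subset$ (displayed set) is immediate from $\mu(\maE(\maU_1)\cap\maV_1)\geq 0$, so the core of the proof is the reverse: from the weak inequality holding for every $\maU_1$, one must extract the strict inequality $\mu(\maU_1)<\gamma(\maE(\maU_1))$. I plan to handle this by an inclusion-minimal counterexample argument, picking a minimal $\maU_1^{\star}$ with $\mu(\maU_1^{\star})\geq\gamma(\maE(\maU_1^{\star}))$ and attempting to derive a contradiction by pruning a vertex whose $\maE$-neighborhood stays inside $\maU_1^{\star}\cup(\maE(\maU_1^{\star})\cap\maV_2)$; the delicate step, and the true obstacle, is controlling the way $\alpha$-weights on edges internal to $\maV_1$ can ``route'' mass between nodes in $\maV_1$ and so artificially relax the inequality through those internal terms. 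I expect this step to require the max-flow/min-cut reformulation used in the self-contained proof of Theorem~\ref{thm:mainqueue}, with a source supplying $\mu(i)$ at each $i\in\maV_1$, a sink absorbing at most $\gamma(j)$ at each $j\in\maV_2$, and the $\alpha$-weights corresponding to the maximum flow; the condition $\mu(\maU_1)<\gamma(\maE(\maU_1))$ then appears directly as the ``max-flow equals $\mu(\maV_1)$'' certificate, closing the argument without any independence assumption on $\maV_1$.
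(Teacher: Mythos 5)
You have correctly isolated the real difficulty: gluing $\mu$ and $\gamma$ into $\nu$ and invoking Theorem~\ref{thm:mainqueue} only identifies the second set of the corollary with $\{\mu\in\maM(\maV_1) : \mu(\maU_1) < \mu(\maE(\maU_1)\cap\maV_1) + \gamma(\maE(\maU_1))\ \text{for all}\ \emptyset\neq\maU_1\subset\maV_1\}$, and the extra term $\mu(\maE(\maU_1)\cap\maV_1)$ has to be disposed of to reach ${\mathcal N}_{\maV_1,\gamma}(G)$. But your plan to close that gap ``without any independence assumption on $\maV_1$'' cannot succeed, because the inclusion you are trying to prove is false as soon as $\maV_1$ contains an edge. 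Take $G$ the triangle on $\{1,2,3\}$ with $\maV_1=\{1,2\}$, $\maV_2=\{3\}$ and $\gamma(3)=1$; the weights $\alpha_{1,2}=10$, $\alpha_{1,3}=\alpha_{2,3}=0.1$ give $\mu(1)=\mu(2)=10.1$ and $\mu^\alpha(3)=0.2<\gamma(3)$, so this $\mu$ belongs to the second set, yet already $\mu(\{1\})=10.1\geq 1=\gamma(\maE(\{1\}))$, so $\mu\notin{\mathcal N}_{\maV_1,\gamma}(G)$. The mechanism is exactly the one you flagged as delicate: a weight on an edge internal to $\maV_1$ is counted in $\mu^\alpha$ at both of its endpoints while contributing nothing to the load on $\maV_2$, so it inflates $\mu(\maU_1)$ invisibly to $\gamma$. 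A max-flow/min-cut reformulation cannot repair this, since it only certifies the mass that actually crosses from $\maV_1$ to $\maV_2$; the offending mass never crosses.

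The statement (and the paper's one-line derivation of it from Theorem~\ref{thm:mainqueue}) is really meant for the situation where $\maV_1$ is an independent set of $G$ --- in particular the bipartite setting of Propositions~\ref{th:caracterisation de chapeau Ncond} and~\ref{prop:finalqueue}, which is the only place the corollary is used. Under that hypothesis the deduction is genuinely immediate and none of your additional machinery is needed: $\maE(\maU_1)\subset\maV_2$ for every nonempty $\maU_1\subset\maV_1$, so the extra term vanishes, $\nu(\maE(\maU_1))=\gamma(\maE(\maU_1))$ for the extension $\nu$ of $\mu$ by $\gamma$, and the two sets of the corollary are literally the slices of ${\mathcal N}_{\maV_1}(G)$ and of $\mathcal W_{\prec,\maV_2}(G)$ obtained by fixing $\nu|_{\maV_2}=\gamma$. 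You should therefore either add the hypothesis that $\maV_1$ is independent (or that $G$ is bipartite with bipartition $\maV_1\cup\maV_2$) and delete the last step of your plan, or record that the corollary as literally stated fails for a general partition.
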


We now focus again on the case where the graph $G=(\maV,\maE)$ is bipartite, of bipartition $\maV=\maV_1\cup \maV_2$. 
Consider a ``multi-class and multi-pool queue'' defined as follows: $\maV_1$ represents the set of classes of customers, and $\maV_2$ the set of types of parallel servers pools. For each servers pool $j\in\maV_2$, let us denote by $s_j\in\mathbb{N}_*$ the number of indistinguishable servers in $j$. 
To simplify the model under study, let us assume that each server can only serve one customer at a time. 
(Notice that this assumption is violated in various practical cases, see for instance \cite{GV18} and \cite{LZ13}.)  
To each class $i\in\maV_1$ of customers, such elements enter the system according to a Poisson point process of intensity $\mu(i)>0$. 
Any customer can potentially be served by more than one class of servers, and the bipartite graph $G=(\maV=\maV_1\cup\maV_2,\maE)$ represents the compatibilities between classes of servers and of customers. 
We suppose that the service times of customers of class $i\in\maV_1$, whenever served by server of class $j\in\maV_2$, is exponentially distributed 
of parameter $\gamma((i,j))>0$. We set $\gamma((i,j))=0$ if (and only if) $(i,j)\not\in\maE$. For details regarding this class of skill-based queueing systems, see e.g.~\cite{CDS20}. In that case, it is shown in \cite{TE93} that for any measure $\gamma\in\maM^+(\maE)$ and any $(s_j)_{j\in\maV_2}\in(\mathbb{N}_*)^{|\maV_2|}$, the set of measures 
\begin{multline*}
\widetilde{\mathcal N}_{\maV_1,\gamma,s}(G)\\
\isdef\left\{\mu\in\mathcal{M}(\mathcal{V}_1) : \exists \pi\in\maM^+(\maE)  \text{ s.t.} \;
\left\{
    \begin{array}{ll}
        \forall i\in\maV_1, \; \mu(i) = \sum\limits_{j\in\maV_2}\gamma((i,j))s_j\pi((i,j)) \\
        \forall j\in\maV_2, \; \sum\limits_{i\in\maV_1} \pi((i,j)) < 1
    \end{array}
\right.
\right\}\end{multline*}
is {\em maximal} for stability, in the sense that no system can reach ergodicity unless $\mu \in \widetilde{\mathcal N}_{\maV_1,\gamma,s}(G)$, within 
a set of admission controls (deciding the classes of servers the incoming customers are assigned to) and service policies (determining the classes of customers the available servers chose to serve) termed SBR policies, see ~\cite{CDS20}. Moreover, it is also shown in \cite{TE93} that the system is stable for any $\mu \in \widetilde{\mathcal N}_{\maV_1,\gamma,s}(G)$ whenever the so-called {\em Maximum Pressure Policy} $\textsc{mpp}$ is implemented - such a policy is then termed {\em throughput optimal}. Moreover, in that case, for any $(i,j)\in\maE$, $\pi((i,j))$ can be interpreted as the proportion of time in which servers of pool $j$ serve class $i$ customers, in steady state. 
First observe the following immediate characterization of the maximal stability region as a set of weighted measures on $\maV_1$,
\begin{proposition}\label{th:caracterisation de chapeau Ncond}
Let $G=(\maV_1\cup\maV_2,\mathcal{E})$ be a connected bipartite graph. Then, for any 
measure $\gamma\in\maM^+(\maE)$ and any $(s_j)_{j\in\maV_2}\in\left(\mathbb{N}_*\right)^{|\maV_2|}$, we have 
$$\widetilde{\mathcal N}_{\maV_1,\gamma,s}(G)=\left\{\mu\in\mathcal{M}(\maV_1) : \exists \alpha\in{\mathcal{M}}(\maE) \; \text{s.t.} \;
\left\{
    \begin{array}{ll}
        \forall i\in\maV_1, \;  \mu_\alpha(i) =  \mu(i) \\
        \forall j\in\maV_2,\; \sum\limits_{i\in\mathcal{E}(j)}\frac{\alpha_{i,j}}{\gamma((i,j))}< s_j
    \end{array}
\right.
\right\}.$$
\end{proposition}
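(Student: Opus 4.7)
The proposition is essentially a change-of-variables identity between the auxiliary measure $\pi$ parameterizing $\widetilde{\mathcal N}_{\maV_1,\gamma,s}(G)$ and the family of weights $\alpha$ parameterizing the right-hand side. My plan is to exhibit an explicit bijection between the two parameter spaces, under which the defining constraints translate into one another term by term.

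First, I define the map $\Phi$ by setting
\[\alpha_{i,j} \isdef \gamma((i,j))\,s_j\,\pi((i,j)), \quad (i,j)\in\maE.\]
Since $\gamma((i,j))>0$ on $\maE$ and $s_j>0$, this is an affine bijection between non-negative (resp.\ strictly positive) families on $\maE$, with inverse $\pi((i,j))=\alpha_{i,j}/(\gamma((i,j))\,s_j)$. Next, I verify that the two first constraints match: for any $i\in\maV_1$,
\[\mu^\alpha(i)=\sum_{j\in\maE(i)}\alpha_{i,j}=\sum_{j\in\maE(i)}\gamma((i,j))\,s_j\,\pi((i,j)),\]
so $\mu^\alpha(i)=\mu(i)$ is exactly the first defining equation of $\widetilde{\mathcal N}_{\maV_1,\gamma,s}(G)$. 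Then for the second constraints: for any $j\in\maV_2$,
\[\sum_{i\in\maE(j)}\frac{\alpha_{i,j}}{\gamma((i,j))}=s_j\sum_{i\in\maE(j)}\pi((i,j)),\]
so $\sum_{i}\alpha_{i,j}/\gamma((i,j))<s_j$ holds if and only if $\sum_{i}\pi((i,j))<1$. Hence $\mu$ lies in $\widetilde{\mathcal N}_{\maV_1,\gamma,s}(G)$ with witness $\pi$ if and only if $\mu$ lies in the right-hand side with witness $\alpha=\Phi(\pi)$.

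The only delicate point is the mismatch between the conventions, since the definition of $\widetilde{\mathcal N}_{\maV_1,\gamma,s}(G)$ only requires $\pi\in\maM^+(\maE)$ (non-negative) while the right-hand side demands $\alpha\in\maM(\maE)$ (full support). In the direction $\widetilde{\mathcal N}_{\maV_1,\gamma,s}(G)\subset$ RHS, if the $\pi$ given by the hypothesis has zero entries on some edges, I slightly perturb $\pi$ into a strictly positive $\pi'$ while preserving the two constraints: the strict inequalities $\sum_i\pi((i,j))<1$ leave slack, and the connectedness of $G$ allows the redistribution of small amounts of weight around cycles without altering the node marginals $\sum_j \gamma((i,j))s_j\pi((i,j))=\mu(i)$. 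The reverse direction RHS $\subset \widetilde{\mathcal N}_{\maV_1,\gamma,s}(G)$ is immediate, since a strictly positive $\alpha$ automatically produces a strictly positive $\pi$. I expect this positivity/perturbation argument to be the only nontrivial step, but it is routine and entirely elementary; the substantive content of the proposition is the direct algebraic correspondence above.
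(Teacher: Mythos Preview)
Your core argument is exactly the paper's: it gives the same substitution $\alpha_{i,j}=\gamma((i,j))\,s_j\,\pi((i,j))$ for the left inclusion and its inverse $\pi((i,j))=\alpha_{i,j}/(\gamma((i,j))\,s_j)$ for the right, in two lines, without further comment.

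One remark on your extra positivity discussion: the paper does not raise it (it treats $\maM^+(\maE)$ and $\maM(\maE)$ as the same full-support class), but if one did need to upgrade a $\pi$ with zero entries to a strictly positive one, your appeal to ``cycles'' is misleading, since a bipartite tree has none. The argument that actually works is local: for an edge $(i_0,j_0)$ with $\pi((i_0,j_0))=0$, the equality $\sum_j \gamma((i_0,j))s_j\pi((i_0,j))=\mu(i_0)>0$ forces some other $\pi((i_0,j_1))>0$; shift a small $\epsilon$ from $(i_0,j_1)$ to $(i_0,j_0)$, preserving the equality at $i_0$ and only slightly tightening the strict inequality at $j_0$. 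Iterating handles all zero edges.
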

\begin{proof}
For the left inclusion, it suffices to set $\alpha_{i,j}=\gamma((i,j))s_j\pi((i,j))$ for all $i\in\maV_1$ and $j\in\maV_2$. 
As for the right inclusion, just set $$\pi((i,j))=\frac{\alpha_{i,j}}{\gamma((i,j))s_j}\mathbf 1_{\{i-j\}},\quad i\in\maV_1,\,j\in\maV_2.$$
\end{proof}

In the particular case where service times depend only on the classes of servers and not on the classes of customers they serve (i.e., $\gamma((i,j))=\gamma(j)$ for all $(i,j)\in\maE$), by gathering Corollary \ref{cor:mainqueue} and Proposition \ref{th:caracterisation de chapeau Ncond}, we immediately obtain the following characterization of the maximal stability region 

\begin{proposition}
\label{prop:finalqueue}
Let $G=(\maV_1\cup\maV_2,\mathcal{E})$ be a connected bipartite graph, 
$\gamma\in\maM^+(\maV_2)$ and $(s_j)_{j\in\maV_2}\in\left(\mathbb{N}_*\right)^{|\maV_2|}$. Let $\hat\gamma\in\maM^+(\maV_2)$ and $\tilde \gamma \in \maM^+(\maE)$ be respectively defined by $\hat\gamma(j)\isdef s_j\gamma(j)$ for all $j\in\maV_2$, and by $\tilde \gamma((i,j))\isdef\gamma(j)$ for all $(i,j)\in\maE$. Then, the maximal stability region $\widetilde{\mathcal N}_{\maV_1,\tilde\gamma,s}(G)$,
the set ${\mathcal N}_{\maV_1,\hat\gamma}(G)$, and the set 
$$\left\{\mu\in\mathcal{M}(\mathcal{V}_1) : \exists \alpha\in{\mathcal{M}}(\maE) \; \text{s.t.} \;
\left\{
    \begin{array}{ll}
        \forall i\in\mathcal{V}_1, \;  \mu_\alpha(i)=\mu(i) \\
        \forall j\in\mathcal{V}_2,\; \mu_\alpha(j) < s_j\gamma(j)
    \end{array}
\right.
\right\}$$
coincide. 
\end{proposition}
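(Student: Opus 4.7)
The plan is to derive the three claimed equalities essentially as a direct combination of Proposition~\ref{th:caracterisation de chapeau Ncond} and Corollary~\ref{cor:mainqueue}, exploiting the special structure of $\tilde\gamma((i,j)) = \gamma(j)$ (service rates depending only on the server pool).

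First, I apply Proposition~\ref{th:caracterisation de chapeau Ncond} with the edge-measure $\tilde\gamma$ in place of $\gamma$. For any candidate $\alpha\in\maM(\maE)$ and any $j\in\maV_2$, since $\tilde\gamma((i,j))=\gamma(j)$ does not depend on $i\in\maE(j)$, one gets
\[
\sum_{i\in\maE(j)}\frac{\alpha_{i,j}}{\tilde\gamma((i,j))} \;=\; \frac{1}{\gamma(j)}\sum_{i\in\maE(j)}\alpha_{i,j} \;=\; \frac{\mu^\alpha(j)}{\gamma(j)},
\]
so that the constraint $\sum_{i\in\maE(j)}\alpha_{i,j}/\tilde\gamma((i,j)) < s_j$ rewrites as $\mu^\alpha(j) < s_j\gamma(j)=\hat\gamma(j)$. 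Proposition~\ref{th:caracterisation de chapeau Ncond} therefore yields at once that $\widetilde{\mathcal N}_{\maV_1,\tilde\gamma,s}(G)$ equals the third set displayed in the statement of Proposition~\ref{prop:finalqueue}.

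Second, I invoke Corollary~\ref{cor:mainqueue} with the measure $\hat\gamma\in\maM^+(\maV_2)$ playing the role of $\gamma$. Its conclusion reads exactly
\[
{\mathcal N}_{\maV_1,\hat\gamma}(G) \;=\; \Bigl\{\mu\in\maM(\maV_1) : \exists\,\alpha\in\maM(\maE),\ \mu^\alpha(i)=\mu(i)\ \forall i\in\maV_1,\ \mu^\alpha(j)<\hat\gamma(j)\ \forall j\in\maV_2\Bigr\},
\]
which is the same third set. Chaining the two identifications gives the coincidence of the three sets.

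No real obstacle is expected; the only point that deserves attention is the bookkeeping of which measure plays the role of $\gamma$ in each invoked result (an edge-measure in Proposition~\ref{th:caracterisation de chapeau Ncond}, a node-measure on $\maV_2$ in Corollary~\ref{cor:mainqueue}), and the observation that the multiplicities $s_j$ are absorbed into $\hat\gamma$ precisely because, under the assumption $\gamma((i,j))=\gamma(j)$, the edge-sum collapses to $\mu^\alpha(j)/\gamma(j)$. Once this is noted, the proof is a one-line substitution in each of the two prior results.
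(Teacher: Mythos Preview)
Your proposal is correct and follows exactly the route the paper indicates: it derives Proposition~\ref{prop:finalqueue} by combining Proposition~\ref{th:caracterisation de chapeau Ncond} (applied with the edge-measure $\tilde\gamma$) and Corollary~\ref{cor:mainqueue} (applied with the node-measure $\hat\gamma$), using the simplification $\sum_{i\in\maE(j)}\alpha_{i,j}/\tilde\gamma((i,j))=\mu^\alpha(j)/\gamma(j)$ afforded by the assumption $\tilde\gamma((i,j))=\gamma(j)$. The paper itself gives no further detail beyond ``by gathering Corollary~\ref{cor:mainqueue} and Proposition~\ref{th:caracterisation de chapeau Ncond}'', so your write-up is in fact more explicit than the original.
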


\section{Proof of Theorem \ref{thm:main}}
\label{sec:proofmain}
We now turn to the proof of our main result, is the case where $G$ is not a bipartite graph.

\subsection{Proof of Theorem \ref{thm:main}}
Fix a connected multigraph $G$ that is not a bipartite graph. 
\subsection*{${(i) \iff (ii)}$} It is a folk result: to prove the implication $(ii) \implies (i)$, one can consider the weighted random walk associated to the weights $(\alpha_{i,j})_{(i,j)\in\maE}$, and conversely, if $\mu$ is a reversible invariant measure for the random walk of matrix $P$, then we can set $\alpha_{i,j}=\mu(i)P(i,j)=\mu(j)P(j,i)$ and check that $\mu$ is the weighted measure associated to the family of weights $(\alpha_{i,j})_{(i,j)\in\maE}$.

\subsection*{${(iii) \iff (iv)}$}  Implication $(iii) \implies (iv)$ is trivial. We now prove the converse implication, that generalizes Lemma 1 in~\cite{MaiMoy17} from graphs to multigraphs. 
Let $G=(\maV,\maE)$ be a connected multigraph and $\mu$ be a measure on $\maV$ satisfying 
$\mu\left(\mathcal I\right) < \mu\left(\maE\left(\mathcal I\right)\right)$ for any independent set $\mathcal I$ of $G$, 
and fix a subset $\maU\in\mathcal{P}(\maV)\backslash\{\emptyset,\maV\}$.
Denote in this proof $\maU_S \isdef\maU\cap \maV_\maS$ and $\maU_N \isdef\maU\cap \maV^c_\maS$, where $\maV_\maS$ is the set of nodes of $\maV$ having a self-loop. We get that 
\begin{equation*}
\mu\left(\maE(\maU)\right)-\mu(\maU) =\mu(\maE(\maU)\cap \maU_S)+\mu(\maE(\maU)\cap \maU_N)+\mu(\maE(\maU)\cap \maU^c)-\mu(\maU_S)-\mu(\maU_N).
\end{equation*}
But by the very definition of a self-loop, we get that $\maE(\maU)\cap \maU_S = \maU_S$, and thus the latter equality can be rewritten as
\begin{equation}\label{eq:truc1}
\mu\left(\maE(\maU)\right)-\mu(\maU) 
=\mu(\maE(\maU)\cap \maU^c)-\mu\left(\maU_N \cap \left(\maE(\maU)\right)^c\right).
\end{equation}
But any element in the neighborhood of $\maU_N \cap  
\left(\maE(\maU)\right)^c$ is in particular an element of 
$\maE(\maU)$, and also an element of $\maU^c$ as no element of $\maU$ can share an edge with an element of $\left(\maE(\maU)\right)^c$. Therefore, we get that 
\begin{equation}
\label{eq:truc2}
\maE\left(\maU_N \cap  
\left(\maE(\maU)\right)^c\right) \subset \maE(\mathcal U)\cap \maU^c.
\end{equation}
If $\maU_N \cap  
\left(\maE(\maU)\right)^c\neq\emptyset$, it is clearly an independent set, 
so we get that 
\begin{equation*}
\mu\left(\maU_N \cap  
\left(\maE(\maU)\right)^c\right) < \mu\left(\maE\left(\maU_N \cap  
\left(\maE(\maU)\right)^c\right)\right).
\end{equation*}
Gathering this with (\ref{eq:truc1}) and (\ref{eq:truc2}), we conclude that 
\[\mu\left(\maE(\maU)\right)-\mu(\maU)>0,\]
which completes the proof, since the case $\maU_N \cap  
\left(\maE(\maU)\right)^c=\emptyset$ is solved by~\eqref{eq:truc1}, the fact that $\maU\varsubsetneq\maV$ and the connectivity of $G$.

\subsection*{$(iv) \iff (v)$}
This equivalence is precisely Theorem 1 in \cite{BMMR21}. 

\subsection*{$(v) \iff (vi)$}
The equivalence between the stability of discrete-time and continuous-time general stochastic matching models was shown in Theorem 1 in 
\cite{MoyPer17}. 

\subsection*{$(ii) \implies (iv)$}
The implication $(ii) \implies (iv)$ was observed in Lemma 11 in \cite{Com21}, in the context of graphs. 
We can easily extend that proof to multigraphs, as follows. 
Let $G=(\maV,\maE)$ be a connected multigraph that is not a bipartite graph. 
For any subset $\mathcal{X}\subset\mathcal{V}$, denote by $\mathcal{A}_{\mathcal{X}}\subset \mathcal E$
the set of edges of the graph having at least one extremity in $\mathcal X$ (including self-loops $(i,i)$ for $i\in\mathcal X$). 
Let $\alpha\in\maM^+(\mathcal{E})$ be a family of weights, and fix an independent set $\mathcal{I}\in\mathbb{I}(G)$. Then, first observe that 
$\mathcal{A}_{\mathcal{I}}\varsubsetneq\mathcal{A}_{\mathcal{E}(\mathcal{I})}$. Indeed, for any 
$(a_1,a_2)\in\mathcal{A}_{\mathcal{I}}$, we have either $a_1 \in\maI$ and thus $a_2 \in \maE(\maI)$, 
or the other way around, and in both cases, $(a_1,a_2) \in \mathcal{A}_{\mathcal{E}(\mathcal{I})}.$ 
Therefore, we have that $\mathcal{A}_{\mathcal{I}}\subset\mathcal{A}_{\mathcal{E}(\mathcal{I})}.$ On another hand, as $G$ is connected and non-bipartite, there exists an edge $e$ connecting an element of $\mathcal{E}(\mathcal{I})$ to an element of $\mathcal I^c$, otherwise we would have $\maE(\maE(\maI))=\maI$ and, since $\maI\in\mathbb{I}(G)$, $(\maI,\maE(\maI))$ would form a bipartition of $G$.
      In particular, as $\maI$ is an independent set, we have that $e\in\mathcal{A}_{\mathcal{E}(\mathcal{I})}\cap \left(\mathcal{A}_{\mathcal{I}}\right)^c$, entailing that $\mathcal{A}_{\mathcal{I}}\varsubsetneq\mathcal{A}_{\mathcal{E}(\mathcal{I})}$. 
      We obtain that 
\begin{equation}
\label{eq:cagen}
\mu^\alpha(\mathcal{I}) 
=\sum\limits_{i\in\mathcal{I}}\sum\limits_{e\in\mathcal{A}_i}\alpha_e
=\sum\limits_{e\in\mathcal{A}_{\mathcal{I}}}\alpha_e
<\sum\limits_{e\in\mathcal{A}_{\mathcal{E}(\mathcal{I})}}\alpha_e 
\leq \sum\limits_{i\in\mathcal{E}(\mathcal{I})}\sum\limits_{e\in\mathcal{A}_i}\alpha_e 
=\mu^\alpha(\mathcal{E}(\mathcal{I})), 
\end{equation}
where the second equality holds due to the fact that the set $\maI$ is independent. 

\subsection*{$(iv) \implies (ii)$}
Let $G=(\maV,\maE)$ be a connected multigraph that is not a bipartite graph. We will show that the set of measures satisfying $(iv)$ is included in the set of weighted measures with positive or null weights. By taking the interior on each side of the inclusion (the set corresponding to $(iv)$ being already open), we will obtain the implication $(iv) \implies (ii)$.
 
Let $\mu\in\maM^+(\maV)$ be a measure. We will reason by contraposition and show that if $\mu$ is not a weighted measure with positive or null weights, then there exists an independent set $\mathcal I$ of $G$ such that $\mu\left(\mathcal I\right) \geq \mu\left(\maE\left(\mathcal I\right)\right)$.

Let $A$ be the matrix indexed by $\maV \times \maE$ such that for $(v,e)\in\maV \times \maE$, $A_{v,e}=1$ if the vertex $v$ is an extremity of the edge $e$, and $A_{v,e}=0$ otherwise. Introduce the vector $b=(\mu(v))_{v\in \maV}$, and for a given family of weights $\alpha\in{\mathcal{M}}^+(\maE)$ 
introduce the vector $x_{\alpha}=(\alpha_e)_{e\in \maE}$. With these notations, the measure $\mu$ is the weighted measure associated to the family of weights $\alpha$ if and only if $Ax_{\alpha}=b$. 

The rest of the proof will rely on Farkas' Lemma \cite{Far02}, that asserts that one and only one of the following linear systems has a solution:
\begin{enumerate}  
\item the system $Ax = b$ of unknown a column vector $x$ indexed by $\maV$ satisfying $x \geq 0$ ;
\item the system $^tA y \geq 0$ of unknown a column vector $y$ indexed by $\maE$ satisfying $^tb y < 0$.
\end{enumerate}

Let us consider the matrix $^tA$, indexed by $\maE \times \maV$. On the line indexed by the edge $e$,
\begin{itemize}
\item[-] if $e$ is not a self-loop, there are exactly two occurrences of the value $1$, at the positions corresponding to the two extremities of $e$,
\item[-] if $e$ is a self-loop, there is exactly one occurrence of the value $1$, at the position corresponding to the vertex having the self-loop $e$.
\end{itemize}
It follows that the equation $^tA y \geq 0$ is equivalent to: 
\begin{itemize}
\item[-] for any edge $e=(i,j)$ which is not a self-loop, $y(i)+y(j)\geq 0$, 
\item[-] for any self-loop $e=(i,i)$, $y(i)\geq 0$.
\end{itemize}

Let us assume that $\mu$ is not a weighted measure, that is, that the system $Ax = b, \; x\geq 0$ has no solution. Then, by Farkas' lemma, the system $^tA y \geq 0, \; ^tb y < 0$ has at least one solution, which means that there exists a vector $y$ satisfying $^tA y \geq 0$, and such that $\sum_{i\in \maV} \mu(i) y(i) <0$.

Let us choose a solution $y$ of the system such that $|y|$ takes as few different values as possible.

\begin{itemize}
\item If $|y|$ takes a single value $c$, then we must have $c>0$. Let $\maV_+\isdef\{i\in \maV : y(i)=c\}$ and $\maV_-\isdef\{i\in \maV : y(i)=-c\}$. In view of the constraints that $y$ must satisfy on the edges, one can check that $\maV_-$ is an independent set, and that $\E(\maV_-)\subset \maV_+$. Furthermore, the condition $\sum_{i\in \maV} \mu(i) y(i)<0$ implies that $\mu(\maV_+)<\mu(\maV_-)$, so that $\mu(\E(\maV_-))<\mu(\maV_-)$. Thus, the measure $\mu$ does not satisfy $(iv)$.
\item Otherwise, let $p$ be the largest value that $|y|$ takes, and let $q$ be the second largest value.
We introduce the sets $\maV^p_{+}\isdef\{i\in \maV : y(i)=p\}$ and $\maV^p_{-}\isdef\{i\in \maV : y(i)=-p\}$. 
Again, the constraints on the edges imply that $\maV^p_{-}$ is an independent set, and we have $\E(\maV^p_-)\subset \maV^p_+$, so that $\mu(\E(\maV^p_-))\leq \mu(\maV^p_+)$. 
\begin{itemize}
\item Let us first assume that $\mu(\maV^p_+)\leq\mu(\maV^p_-)$. Then, $\maV^p_-$ is an independent set such that $\mu(\E(\maV^p_-))\leq\mu(\maV^p_-)$, so that $\mu$ does not satisfy $(iv)$.
\item Let us now assume that $\mu(\maV^p_+)>\mu(\maV^p_-)$. Then, let us modify $y$ by giving the value $q$ to all the vertices of $\maV^p_+$ and the value $-q$ to all the vertices of $\maV^p_-$. One can check that the new vector $y'$ then defined still satisfy the constraints on the edges and that the new value of the sum $\sum_{i\in \maV} \mu(i) y(i)$ has decreased by $(p-q)(\mu(\maV^p_{+})-\mu(\maV^p_{-}))$, so that it is still negative. We have thus a contradiction with the fact that $|y|$ takes as few different values as possible.\end{itemize}
\end{itemize}
Gathering the above, we have that $(i) \iff (ii)$, $(iii) \iff (iv) \iff (v) \iff (vi)$, and $(ii) \iff (iv)$. Therefore, the proof of Theorem~\ref{thm:main} is complete.

\subsection{An alternative proof of $(v) \implies (ii)$ using matching rates}
\label{subsec:alternate}
We now provide an alternative proof of the implication $(v) \implies (ii)$, relying on the matching rates of a related stochastic matching model.  As will be shown in Section \ref{subsec:unique}, we can deduce from this characterization, interesting properties of the matching models at stake, among which, invariance properties of the matching rates with respect to the matching policy. 

\begin{definition}
For a general matching model associated to any fixed $(G,\Phi,\bar\mu)$, for any fixed time $n\geq 1$ and any fixed $i$ and $j$ in $\mathcal{V}$, we set 
$$M_n[i,j]\isdef\text{{\em \small{Number of matchings $(i,j)$ performed up to time $n$ included}}},$$
where the above is set as null a.s. if $(i,j)\not\in\mathcal{E}$. Then, the {\em matching rate} of $i$ with $j$ (or equivalently, of $j$ with $i$) up to $n$ is defined as the proportion $M_n[i,j] / n$. 
\end{definition}

\noindent In the sequel, for any fixed couple $(G,\Phi)$, any $i\in\mathcal{V}$ and any $j\in\mathcal{E}(i)$, we set 
\begin{equation}
\label{eq:defAij}
A_{i\rightarrow j}\isdef\{\text{\small{states of $\mathbb W$ s.t. an incoming $i$-item gets matched with a $j$-item}}\}.
\end{equation}

\begin{lemma}\label{prop : Mn[i,j]/n tend vers Theta[i,j]}
For any {stable} general matching model associated to $(G,\Phi,\bar\mu)$ and any $(i,j)\in \mathcal{E}$,  the asymptotic matching rates satisfy 
$$\frac{M_n[i,j]}{n}\xrightarrow[n\rightarrow +\infty]{\text{a.s.}} {\Theta}^{\Phi,\bar\mu}[i,j],$$
where
\begin{equation}\label{eq:Theta ij}
{\Theta}^{\Phi,\bar\mu}[i,j]\isdef \bar\mu(i)\Pi_W^{\Phi,\bar\mu}\left(A_{i\rightarrow j}\right) + \bar\mu(j)\Pi_W^{\Phi,\bar\mu}\left(A_{j\rightarrow i}\right)\mathbf 1_{\{i\neq j\}},
\end{equation}
and $\Pi_W^{\Phi,\bar\mu}$ represents the unique stationary distribution of the chain $\suite{W_n^{\Phi,\bar\mu}}$. 
\end{lemma}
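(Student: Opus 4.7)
The plan is to decompose the counter $M_n[i,j]$ into the two disjoint types of events that trigger such a matching, and then to apply the pointwise ergodic theorem to the positive recurrent chain $\{W_n^{\Phi,\bar\mu}\}$.

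First, I would use the definition (\ref{eq:defAij}) of $A_{i\rightarrow j}$ to observe that a matching of the pair $(i,j)$ is realized at step $k\geq 1$ if and only if either $V_k=i$ and $W_{k-1}^{\Phi,\bar\mu}\in A_{i\rightarrow j}$, or (when $i\neq j$) $V_k=j$ and $W_{k-1}^{\Phi,\bar\mu}\in A_{j\rightarrow i}$, the two events being mutually exclusive. Hence
$$M_n[i,j]=\sum_{k=1}^n \mathbf 1_{\{V_k=i,\,W_{k-1}^{\Phi,\bar\mu}\in A_{i\rightarrow j}\}}+\mathbf 1_{\{i\neq j\}}\sum_{k=1}^n \mathbf 1_{\{V_k=j,\,W_{k-1}^{\Phi,\bar\mu}\in A_{j\rightarrow i}\}}.$$

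Second, stability of the model entails that the irreducible chain $\{W_n^{\Phi,\bar\mu}\}$ is positive recurrent on $\mathbb W$ with unique invariant distribution $\Pi_W^{\Phi,\bar\mu}$, hence ergodic. As $V_k$ is independent of $\sigma(W_0,V_1,\ldots,V_{k-1})\supset \sigma(W_{k-1}^{\Phi,\bar\mu})$ and distributed according to $\bar\mu$, the pointwise ergodic theorem applied to the joint Markov chain $(W_{k-1}^{\Phi,\bar\mu},V_k)_{k\geq 1}$ yields, for any $\ell\in\mathcal V$ and any $B\subset \mathbb W$,
$$\frac{1}{n}\sum_{k=1}^n \mathbf 1_{\{V_k=\ell,\,W_{k-1}^{\Phi,\bar\mu}\in B\}}\xrightarrow[n\to\infty]{\text{a.s.}}\bar\mu(\ell)\,\Pi_W^{\Phi,\bar\mu}(B).$$
Plugging $(\ell,B)=(i,A_{i\rightarrow j})$ into the first sum and $(\ell,B)=(j,A_{j\rightarrow i})$ into the second and summing reproduces exactly the right-hand side of (\ref{eq:Theta ij}).

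The only delicate point is justifying the above a.s.\ limit when the chain does not start from $\Pi_W^{\Phi,\bar\mu}$. This is standard for an irreducible positive recurrent Markov chain on a countable state space: either couple $\{W_n^{\Phi,\bar\mu}\}$ with its stationary version after the first hitting time of a fixed atom (which leaves Cesàro averages unaffected), or apply Birkhoff's ergodic theorem directly on the canonical path space, using that such a chain is ergodic. Either route delivers the claimed convergence and hence the lemma.
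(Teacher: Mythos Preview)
Your proposal is correct and follows essentially the same approach as the paper: both decompose $M_n[i,j]$ into indicators of the two exclusive events $\{V_k=i,\,W_{k-1}\in A_{i\to j}\}$ and $\{V_k=j,\,W_{k-1}\in A_{j\to i}\}$, form the joint ergodic Markov chain $(W_{k-1}^{\Phi,\bar\mu},V_k)$ with stationary law $\Pi_W^{\Phi,\bar\mu}\otimes\bar\mu$, and invoke the ergodic theorem. Your explicit remark on the validity of the ergodic limit for arbitrary initial conditions is a welcome addition that the paper leaves implicit.
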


\begin{proof}
Fix an edge $(i,j)\in\mathcal{E}$ with $i\neq j$. 
Then, we have a.s. 
\begin{align*}
\frac{M_n[i,j]}{n}
&{=} \frac{1}{n} \sum\limits_{k=1}^n \mathbf 1_{\{\text{a match $(i,j)$ is performed at time $k$}\}} \\
&{=} \frac{1}{n} \sum\limits_{k=1}^n \left[\mathbf 1_{\{V_k=i\}}\mathbf 1_{\left\{W_{k-1}^{\Phi,\mu}\in A_{i\rightarrow j}\right\}} + \mathbf 1_{\{V_k=j\}}\mathbf 1_{\left\{W_{k-1}^{\Phi,\mu}\in A_{j\rightarrow i}\right\}}\right] \\
&{= :} \frac{1}{n} \sum\limits_{k=0}^{n-1} f_{i,j}\left(W_k^{\Phi,\mu},V_{k+1}\right). 
\end{align*}
\noindent
Moreover, for all $k\in\mathbb{N}, \; W_k^{\Phi,\bar\mu} \; \text{and} \; V_{k+1}$ are independant, and 
$\suitek{W_k^{\Phi,\bar\mu}}$ is an ergodic Markov chain of stationary distribution $\Pi_W^{\Phi,\bar\mu}$. Thus, $\suitek{\left(W_k^{\Phi,\bar\mu},V_{k+1}\right)}$ is an ergodic Markov chain on $\mathbb{W}\times\mathcal{V}$, whose unique stationary distribution is given by $\Pi^{\Phi,\bar\mu}\isdef\Pi_W^{\Phi,\bar\mu}\otimes\bar\mu$. 
As $f_{i,j}$ is bounded by $1$, it is integrable with respect to $\Pi^{\Phi,\bar\mu}$ and the ergodic theorem for Markov chains gives that a.s.,
\begin{eqnarray*}
\underset{n\rightarrow +\infty}{\lim} \left(\frac{1}{n} \sum\limits_{k=0}^{n-1} f_{i,j}\left(W_k^{\Phi,\bar\mu},V_{k+1}\right)\right)&{=} &\sum\limits_{w\in\mathbb{W}}\sum\limits_{v\in\mathcal{V}} f_{i,j}(w,v)\Pi^{\Phi,\bar\mu}(w,v) \\
&{=} &\sum\limits_{w\in\mathbb{W}}\mathbf 1_{\{w\in A_{i\rightarrow j}\}}\Pi_W^{\Phi,\bar\mu}(w)\sum\limits_{v\in\mathcal{V}}\mathbf 1_{\{v=i\}}\bar\mu(v)\\
& & + \sum\limits_{w\in\mathbb{W}}\mathbf 1_{\{w\in A_{j\rightarrow i}\}}\Pi_W^{\Phi,\bar\mu}(w)\sum\limits_{v\in\mathcal{V}}\mathbf 1_{\{v=j\}}\bar\mu(v)\\
&{=}& \bar\mu(i)\Pi_W^{\Phi,\bar\mu}\left(A_{i\rightarrow j}\right) + \bar\mu(j)\Pi_W^{\Phi,\bar\mu}\left(A_{j\rightarrow i}\right),
\end{eqnarray*}
and the computation is the same without the second term, in the case $i=j$. 
\end{proof}

By combining the result above and the strong law of large numbers, we get the following,

\begin{lemma}\label{lemma : taux d'arrivee = taux de depart des i}
For any stable general matching model associated to $(G,\Phi,\mu)$, we have
\begin{equation}
\label{eq:defTheta}
\bar \mu(i)=\sum\limits_{j\in\mathcal{V}} \left(1+\mathbf 1_{\{j=i\}}\right){\Theta}^{\Phi,\bar\mu}[i,j],\quad \mbox{for all }i\in\maV. 
\end{equation}
\end{lemma}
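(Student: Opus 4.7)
The plan is to run a classical accounting argument: count arrivals of class-$i$ items up to time $n$, count departures of class-$i$ items up to time $n$, and use positive recurrence to conclude that the discrepancy is negligible compared to $n$.

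First I would observe that the number of class-$i$ arrivals up to time $n$ is simply $A_n(i) := \sum_{k=1}^n \mathbf{1}_{\{V_k = i\}}$, so by the strong law of large numbers applied to the i.i.d. sequence $\{V_k\}$,
\[
\frac{A_n(i)}{n} \xrightarrow[n\to\infty]{\text{a.s.}} \bar\mu(i).
\]
Next, departures of class-$i$ items by time $n$ are exactly produced by matches involving $i$. Each match of type $(i,j)$ with $j\neq i$ removes precisely one $i$-item, and each match of type $(i,i)$ (a self-loop match) removes two $i$-items. Hence the total number of $i$-items removed by time $n$ is
\[
D_n(i) = \sum_{j \in \maV,\, j\neq i} M_n[i,j] \;+\; 2\, M_n[i,i].
\]

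Let $X_n(i)$ denote the number of class-$i$ items present in the buffer at time $n$; then the conservation identity reads
\[
X_n(i) = A_n(i) - D_n(i).
\]
The key point is that stability of the matching model, i.e.\ positive recurrence of $\{W_n^{\Phi,\bar\mu}\}$, forces $X_n(i)/n \to 0$ a.s. Indeed, since the chain is positive recurrent on its countable state space, for any fixed reference state $w^\star$ (say the empty buffer) the return times have finite mean; between consecutive visits to $w^\star$, the buffer content $X_n(i)$ is bounded, and a standard renewal/regenerative argument then gives $X_n(i) = o(n)$ almost surely.

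Dividing the conservation identity by $n$ and letting $n \to \infty$, the left side tends to $0$ and on the right we can invoke Lemma~\ref{prop : Mn[i,j]/n tend vers Theta[i,j]} edge by edge. This yields
\[
\bar\mu(i) \;=\; \sum_{j \in \maV,\, j\neq i} \Theta^{\Phi,\bar\mu}[i,j] \;+\; 2\, \Theta^{\Phi,\bar\mu}[i,i],
\]
which is precisely the claimed identity once the two contributions are merged via the indicator $(1 + \mathbf{1}_{\{j=i\}})$. The only genuinely delicate point is the sublinearity $X_n(i)=o(n)$, but that is standard once positive recurrence of $\{W_n^{\Phi,\bar\mu}\}$ has been established (which is assumption $(v)$ of Theorem~\ref{thm:main}).
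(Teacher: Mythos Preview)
Your proof is correct and follows essentially the same route as the paper: apply the SLLN to $A_n(i)/n$, apply Lemma~\ref{prop : Mn[i,j]/n tend vers Theta[i,j]} to $D_n(i)/n$, and use positive recurrence to show the buffer count $|W_n|_i = A_n(i)-D_n(i)+|W_0|_i$ is $o(n)$. The only cosmetic difference is that the paper dispatches the sublinearity step by a direct contradiction (if the limit of $(A_n(i)-D_n(i))/n$, which exists since both terms converge, were nonzero then $|W_n|_i$ would eventually stay positive or negative, contradicting recurrence), whereas you invoke a regenerative argument; both are standard and valid.
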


\begin{proof}
Consider an item $i\in\mathcal{V}$ and denote, for all $n\in\N_*$, by $A_n(i)$ and $D_n(i)$ the number of arrivals and the number of departures up to time $n$ (included) of $i$-items, respectively. 
By the strong law of large numbers, we have 
\begin{equation*}
\frac{A_n(i)}{n} 
= \frac{1}{n}\sum\limits_{k=1}^n 1_{\{V_k=i\}} 
\xrightarrow[n\rightarrow +\infty]{\text{a.s.}} \mu(i).
\end{equation*}
Moreover, Lemma~\ref{prop : Mn[i,j]/n tend vers Theta[i,j]} entails that 
\begin{align*}
\frac{D_n(i)}{n} 
&= \frac{2M_n[i,i]}{n} + \sum\limits_{j\in\mathcal{V}\backslash\{i\}} \frac{M_n[i,j]}{n} \\
&\xrightarrow[n\rightarrow +\infty]{\text{a.s.}} 2{\Theta}^{\Phi,\bar\mu}[i,i] + \sum\limits_{j\in\mathcal{V}\backslash\{i\}} {\Theta}^{\Phi,\bar\mu}[i,j] = {\Theta}^{\Phi,\bar\mu}[i,i] + \sum\limits_{j\in\mathcal{V}} {\Theta}^{\Phi,\bar\mu}[i,j]. 
\end{align*}
Comparing to the two limits above, it remains to prove that 
$$\frac{A_n(i)-D_n(i)}{n}\xrightarrow[n\rightarrow +\infty]{\text{a.s.}} 0.$$ 
For this, observe that we have a.s., for all $n\ge 1$, 
\begin{equation}\label{eq : nombre de i dans Wn(omega) (cas general)}
\left|W_n^{\Phi,\bar\mu}\right|_i=A_n(i)-D_n(i)+\left|W_0^{\Phi,\bar\mu}\right|_i,
\end{equation}
where $|w|_i$ denotes the number of occurrences of $i$ in the word $w$.\\
Let us suppose that, 
on some event $\Omega'\subset\Omega$ with $\mathbb{P}(\Omega')>0$, we have 
\begin{equation*}
\underset{n\rightarrow +\infty}{\lim} \displaystyle\frac{A_n(i)-D_n(i)}{n}\neq 0.
\end{equation*}
Fix a realization $\omega\in\Omega'$. First, if $$\underset{n\rightarrow +\infty}{\lim} \displaystyle\frac{A_n(i)(\omega)-D_n(i)(\omega)}{n}> 0,$$ 
from \eqref{eq : nombre de i dans Wn(omega) (cas general)}, there would exist 
a rank $M(\omega)$ such that $\left|W_n^{\Phi,\mu}(\omega)\right|_i>0$ for all $n\ge M(\omega)$.  
Likewise, if we have that $$\underset{n\rightarrow +\infty}{\lim} \displaystyle\frac{A_n(i)(\omega)-D_n(i)(\omega)}{n}< 0,$$
then in view of \eqref{eq : nombre de i dans Wn(omega) (cas general)}, 
for some $M(\omega)$ we would have that $\left|W_n^{\Phi,\bar\mu}(\omega)\right|_i<0$ for all $n\ge M(\omega)$. All in all, we get that 
$\Omega'\subset \lim\inf_{n\to \infty} \left\{\omega\in\Omega : \left|W_n^{\Phi,\bar\mu}(\omega)\right|_i\ne0\right\}$, an absurdity in view of the recurrence of $\suite{W_n^{\Phi,\bar\mu}}$. This concludes the proof.
\end{proof}

We deduce the following result, from which the implication $(v) \implies (ii)$ of Theorem~\ref{thm:main} directly follows,

\begin{corollary}\label{cor:stabpoids}
For any connected multigraph $G=(\maV,\maE)$ and any admissible matching policy $\Phi$, we have
$$\textsc{stab}(G,\Phi)\subset\{\bar\mu^{\alpha}:\alpha\in{\mathcal{M}}(\maE)\}=\mathcal{W}^1(G).$$
Specifically, we have that 
$$\bar\mu\in\textsc{stab}(G,\Phi)\Longrightarrow \bar\mu=\bar\mu{\alpha^{\Phi,\mu}},$$
where
\begin{equation}
\label{eq:defalphamu}
\begin{array}{ccccc}
\alpha^{\Phi,\bar\mu} & : & \mathcal{E} & \longrightarrow & \mathbb{R}_* \\
 & & (i,j) & \longmapsto & \alpha^{\Phi,\bar\mu}((i,j))\isdef \left(1+\mathbf 1_{\{i=j\}}\right){\Theta}^{\Phi,\bar\mu}[i,j],
\end{array}
\end{equation}
for $\Theta^{\Phi,\bar\mu}$ defined by \eqref{eq:Theta ij}.
\end{corollary}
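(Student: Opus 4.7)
The plan is to derive Corollary~\ref{cor:stabpoids} as an immediate consequence of Lemma~\ref{lemma : taux d'arrivee = taux de depart des i}, which already packages the essential computation. First I would fix $\bar\mu\in\textsc{stab}(G,\Phi)$ and observe that, by the very definition of the events $A_{i\to j}$ in~\eqref{eq:defAij}, we have $\Theta^{\Phi,\bar\mu}[i,j]=0$ whenever $(i,j)\notin\maE$: an incoming item can only be matched with a compatible one. Thus Lemma~\ref{lemma : taux d'arrivee = taux de depart des i} can be rewritten as
\[
\bar\mu(i)=\sum_{j\in\maE(i)}\left(1+\mathbf 1_{\{j=i\}}\right)\Theta^{\Phi,\bar\mu}[i,j]=\sum_{j\in\maE(i)}\alpha^{\Phi,\bar\mu}((i,j))=\mu^{\alpha^{\Phi,\bar\mu}}(i),
\]
which is exactly the identity~\eqref{eq:deflambdaalpha}. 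Summing over $i$ gives $\mu^{\alpha^{\Phi,\bar\mu}}(\maV)=\bar\mu(\maV)=1$, so the normalization in~\eqref{eq:defmualpha} is trivial and $\bar\mu=\bar\mu^{\alpha^{\Phi,\bar\mu}}$.

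To conclude that this expresses $\bar\mu$ as a \emph{weighted} measure in the sense of the excerpt, I then need to verify that $\alpha^{\Phi,\bar\mu}$ genuinely belongs to $\mathcal M(\maE)$, i.e.\ is symmetric and strictly positive on each edge. Symmetry is immediate from the symmetric definition~\eqref{eq:Theta ij} of $\Theta^{\Phi,\bar\mu}[i,j]$, together with the symmetry of the correction factor $1+\mathbf 1_{\{i=j\}}$.

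The one point that requires a separate argument, and that I expect to be the main obstacle, is the strict positivity of $\alpha^{\Phi,\bar\mu}((i,j))$ for every $(i,j)\in\maE$. For this I would exploit the full support of $\bar\mu$ together with positive recurrence of $\suite{W_n^{\Phi,\bar\mu}}$. Since the empty queue is a recurrent state, it carries positive stationary mass; starting from it, with positive probability two consecutive arrivals have respective classes $i$ and $j$, in which case the second item is necessarily matched with the first, so the chain reaches $A_{j\to i}$ (or $A_{i\to j}$) with positive probability. By positive recurrence this yields $\Pi_W^{\Phi,\bar\mu}(A_{i\to j})+\Pi_W^{\Phi,\bar\mu}(A_{j\to i})>0$, and combined with $\bar\mu(i),\bar\mu(j)>0$ it gives $\Theta^{\Phi,\bar\mu}[i,j]>0$, hence $\alpha^{\Phi,\bar\mu}((i,j))>0$.

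Putting the two together, $\bar\mu=\bar\mu^{\alpha^{\Phi,\bar\mu}}\in\mathcal W^1(G)$ for the weight family given by~\eqref{eq:defalphamu}, which establishes the inclusion $\textsc{stab}(G,\Phi)\subset\mathcal W^1(G)$ and the explicit formula for the associated weights. In particular, any $\bar\mu$ satisfying~$(v)$ in Theorem~\ref{thm:main} lies in $\mathcal W^1(G)$, which is precisely the assertion $(v)\Rightarrow(ii)$.
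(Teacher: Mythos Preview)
Your proof is correct and follows essentially the same approach as the paper: invoke Lemma~\ref{lemma : taux d'arrivee = taux de depart des i} to identify $\bar\mu(i)$ with $\mu^{\alpha^{\Phi,\bar\mu}}(i)$ and then normalize. You are in fact more careful than the paper, which simply asserts $\alpha^{\Phi,\bar\mu}\in\mathcal{M}(\maE)$ without justifying strict positivity of the $\Theta^{\Phi,\bar\mu}[i,j]$; your argument via positive recurrence of the single-item states fills that gap.
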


\begin{proof}
Let fix a connected multigraph $G=(\mathcal{V}, \mathcal{E})$ and a matching policy $\Phi$. 
Let $\bar\mu\in\textsc{stab}(G,\Phi)$ (if the set is empty, the result is trivial), in a way that the general stochastic matching model associated to $(G,\Phi,\mu)$ is stable. Then, for the family of weights $\alpha^{\Phi,\bar\mu}\in {\mathcal{M}}(\maE)$ defined by (\ref{eq:defalphamu}), recalling (\ref{eq:defmualpha}) and in view of Lemma \ref{lemma : taux d'arrivee = taux de depart des i}, we have that for all $i\in\maV$, 
\begin{equation*}
\mu{\alpha^{\Phi,\bar\mu}}(i)
= \frac{{\Theta}^{\Phi,\bar\mu}[i,i] + \sum\limits_{j\in\mathcal{V}}{\Theta}^{\Phi,\bar\mu}[i,j]}{\sum\limits_{l\in\mathcal{V}}\left({\Theta}^{\Phi,\bar\mu}[l,l] + \sum\limits_{j\in\mathcal{V}}{\Theta}^{\Phi,\bar\mu}[l,j]\right)} 
= \frac{\bar\mu(i)}{\sum\limits_{l\in\mathcal{V}}\bar\mu(l)} = \bar\mu(i), 
\end{equation*} 
which concludes the proof.
\end{proof}


\section{Proof of Theorem \ref{thm:mainBip}}
\label{sec:proofmainBip}

\subsection{Matching rates of extended bipartite matching models} 
Before we could turn to the proof of Theorem \ref{thm:mainBip}, it will be useful to 
adapt the arguments of Section \ref{subsec:alternate} to bipartite graphs. For this, we first define the matching rates in extended 
bipartite stochastic matching models, as follows, 
\begin{definition}
For any bipartite matching model associated to a triple $(G=(\mathcal{V}_1\cup\mathcal{V}_2,\mathcal{E}),\Phi,\tilde\mu)$, for any fixed time $n\geq 1$ and any fixed couple $(i,j)\in\mathcal{V}_1\times\mathcal{V}_2$, we set 
$$M_n^{\textsc{b}}[i,j]\isdef\text{{\em \small{Number of matchings $(i,j)$ performed up to time $n$ included}}},$$
where the above is set as null a.s. if $(i,j)\not\in\mathcal{E}$. 
Then, the {matching rate} of $i$ with $j$ up to $n$ is defined as the proportion $M_n^{\textsc{b}}[i,j] / n$.
\end{definition}

As above, for any fixed bipartite graph $G=(\maV_1\cup\maV_2,\maE)$ and any admissible $\Phi$, for any $i\in\maV_1$ and $j\in\maV_2$, we define the following sets:   
\begin{align*}
A^{\textsc{b}}_{i\rightarrow j} &\isdef\left\{\text{\small{buffers s.t. an incoming $i$-item gets matched with a stored $j$-item}}\right\},\\
A^{\textsc{b}}_{i\leftrightarrow j} &\isdef\{\text{\small{buffers s.t. an incoming 
$(i,j)$-couple gets matched together}}\}.
\end{align*}
(Observe that the latter sets are non-empty only if $(i,j)\in\maE$, 
and that the second one does not depend on $\Phi$.) 
The following result, whose proof is analog to that of Lemma \ref{prop : Mn[i,j]/n tend vers Theta[i,j]}, makes precise the asymptotic matching rates in a stable bipartite matching model, 

\begin{proposition}\label{prop : Mn[i,j]/n tend vers Theta[i,j] (cas biparti)}
Consider a bipartite matching model associated to the triple $(G=(\mathcal{V}_1\cup\mathcal{V}_2,\mathcal{E}),\Phi,\tilde\mu)$ such that 
$\tilde\mu\in \textsc{stab}_{\textsc{b}}(G,\Phi)$. Then, for any 
$(i,j)\in\maE$, we have that 
\begin{multline*}
\begin{aligned}
\frac{M_n^{\textsc{b}}[i,j]}{n} \xrightarrow[n\rightarrow +\infty]{\text{a.s.}}&\;\; {\Theta}_{\textsc{b}}^{\Phi,\tilde\mu}[i,j]\\
\isdef &\;\;\tilde\mu((i,\mathcal{V}_2\setminus\{j\}))\tilde{\Pi}^{\Phi,\tilde\mu}_W\left(A^{\textsc{b}}_{i\rightarrow j}\right) 
+ \tilde\mu((\mathcal{V}_1\setminus\{i\},j))\tilde{\Pi}^{\Phi,\tilde\mu}_W\left(A^{\textsc{b}}_{j\rightarrow i}\right)
\end{aligned}\\
+ \tilde\mu((i,j))\tilde{\Pi}^{\Phi,\mu}_W\left(A^{\textsc{b}}_{i\leftrightarrow j}\right),
\end{multline*}
where $\tilde{\Pi}^{\Phi,\tilde\mu}_W$ represents the unique stationary probability distribution of the positive recurrent Markov chain $\suite{Y_n^{\Phi,\tilde\mu}}$.
\end{proposition}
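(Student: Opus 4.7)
\emph{Proof plan.} The strategy is to mimic the proof of Lemma~\ref{prop : Mn[i,j]/n tend vers Theta[i,j]} in the bipartite setting, namely to write $M^{\textsc{b}}_n[i,j]$ as an ergodic sum along the extended Markov chain $\bigl(Y_k^{\Phi,\tilde\mu},V_{k+1}\bigr)_{k\geq 0}$ and to pass to the limit via Birkhoff's ergodic theorem.

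\textbf{Step 1 (decomposition).} Fix $(i,j)\in\maE$. According to the sequential arrival procedure defining the extended bipartite matching model, an $(i,j)$-matching produced at time $k\geq 1$ falls into exactly one of three disjoint arrival regimes for the incoming couple $V_k=(V_k^1,V_k^2)$:
\begin{enumerate}
\item[(a)] $V_k^1=i$ and $V_k^2\neq j$: the incoming $i$-item pairs off with a stored $j$-item from the buffer, which happens iff $Y_{k-1}^{\Phi,\tilde\mu}\in A^{\textsc{b}}_{i\rightarrow j}$;
\item[(b)] $V_k^1\neq i$ and $V_k^2=j$: symmetrically, the incoming $j$-item pairs off with a stored $i$-item, iff $Y_{k-1}^{\Phi,\tilde\mu}\in A^{\textsc{b}}_{j\rightarrow i}$;
\item[(c)] $V_k=(i,j)$: neither incoming item finds a match in the buffer and the couple is matched together, iff $Y_{k-1}^{\Phi,\tilde\mu}\in A^{\textsc{b}}_{i\leftrightarrow j}$.
\end{enumerate}
Summing these three disjoint indicators defines a bounded function $f_{i,j}:\mathbb{Y}\times(\maV_1\times\maV_2)\to\{0,1\}$ such that, a.s.,
$$\frac{M^{\textsc{b}}_n[i,j]}{n} \;=\; \frac{1}{n}\sum_{k=0}^{n-1} f_{i,j}\bigl(Y_k^{\Phi,\tilde\mu},V_{k+1}\bigr).$$

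\textbf{Step 2 (ergodic extended chain).} For every $k\geq 0$, $Y_k^{\Phi,\tilde\mu}$ is measurable with respect to $\sigma(V_1,\ldots,V_k)$ and is therefore independent of $V_{k+1}$. Combined with the stability hypothesis $\tilde\mu\in\textsc{stab}_{\textsc{b}}(G,\Phi)$, which ensures that $\bigl(Y_k^{\Phi,\tilde\mu}\bigr)_{k\geq 0}$ is an ergodic Markov chain with unique stationary probability $\tilde\Pi^{\Phi,\tilde\mu}_W$, this implies that $\bigl(Y_k^{\Phi,\tilde\mu},V_{k+1}\bigr)_{k\geq 0}$ is an ergodic Markov chain on $\mathbb{Y}\times(\maV_1\times\maV_2)$ with unique stationary probability $\tilde\Pi^{\Phi,\tilde\mu}_W\otimes\tilde\mu$.

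\textbf{Step 3 (Birkhoff).} Since $f_{i,j}$ is bounded, it is integrable against $\tilde\Pi^{\Phi,\tilde\mu}_W\otimes\tilde\mu$, so the ergodic theorem for Markov chains gives, a.s.,
$$\frac{M^{\textsc{b}}_n[i,j]}{n}\xrightarrow[n\to\infty]{}\sum_{y\in\mathbb{Y}}\sum_{v\in\maV_1\times\maV_2} f_{i,j}(y,v)\,\tilde\Pi^{\Phi,\tilde\mu}_W(y)\,\tilde\mu(v).$$
Splitting this double sum along the three disjoint summands defining $f_{i,j}$, each of them factorises into a product of a marginal $v$-sum, yielding $\tilde\mu\bigl((i,\maV_2\setminus\{j\})\bigr)$, $\tilde\mu\bigl((\maV_1\setminus\{i\},j)\bigr)$ and $\tilde\mu((i,j))$ respectively, and a marginal $y$-sum, yielding $\tilde\Pi^{\Phi,\tilde\mu}_W(A^{\textsc{b}}_{i\rightarrow j})$, $\tilde\Pi^{\Phi,\tilde\mu}_W(A^{\textsc{b}}_{j\rightarrow i})$ and $\tilde\Pi^{\Phi,\tilde\mu}_W(A^{\textsc{b}}_{i\leftrightarrow j})$. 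Reassembling the three products recovers the announced expression for $\Theta^{\Phi,\tilde\mu}_{\textsc{b}}[i,j]$.

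The only delicate point is the bookkeeping carried out in Step~1: one must verify carefully that, under the two-stage arrival dynamics of the extended bipartite matching model, every $(i,j)$-matching created at a given time step corresponds to exactly one of the three disjoint events (a)--(c), and that each of them is precisely captured by the corresponding buffer set $A^{\textsc{b}}_\bullet$. Once this matching of events is established, Steps~2 and 3 are a verbatim bipartite transcription of the non-bipartite argument of Lemma~\ref{prop : Mn[i,j]/n tend vers Theta[i,j]}.
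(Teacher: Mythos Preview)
Your overall approach is exactly what the paper intends: it merely writes that the proof is ``analog to that of Lemma~\ref{prop : Mn[i,j]/n tend vers Theta[i,j]}'', and your Steps~2 and~3 are a faithful bipartite transcription of that ergodic-sum argument.

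There is, however, a genuine bookkeeping gap in Step~1. Your case~(c) is stated as ``$V_k=(i,j)$: \emph{neither incoming item finds a match in the buffer} and the couple is matched together''. But when $V_k=(i,j)$, an $(i,j)$-matching can also be produced in two other ways: the incoming $j$ may pair with a \emph{stored} $i$ (buffer in $A^{\textsc{b}}_{j\to i}$), or the incoming $i$ may pair with a \emph{stored} $j$ (buffer in $A^{\textsc{b}}_{i\to j}$). These three sub-events are mutually exclusive (a stored $i$ and a stored $j$ cannot coexist in the buffer since $(i,j)\in\maE$, and if either incoming item matches in the buffer the couple does not match together), but your description of~(c) retains only the last one. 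Consequently, your events (a)--(c) do \emph{not} exhaust all $(i,j)$-matchings. You flag this very point as ``the only delicate point'' but do not carry out the verification, and as written the claim fails. The fix is either to read $A^{\textsc{b}}_{i\leftrightarrow j}$ broadly as ``an $(i,j)$-match is produced upon arrival of an $(i,j)$-couple'' (i.e.\ the disjoint union $A^{\textsc{b}}_{i\to j}\cup A^{\textsc{b}}_{j\to i}\cup\{\text{couple matches together}\}$), which makes your three-term expansion correct as stated; or, keeping the narrow reading, to write the indicator without excluding $j$ (resp.\ $i$) from the second coordinate in cases (a) and (b), namely
\[
\mathbf 1_{\{V_k^1=i\}}\mathbf 1_{\{Y_{k-1}\in A^{\textsc{b}}_{i\to j}\}}+\mathbf 1_{\{V_k^2=j\}}\mathbf 1_{\{Y_{k-1}\in A^{\textsc{b}}_{j\to i}\}}+\mathbf 1_{\{V_k=(i,j)\}}\mathbf 1_{\{Y_{k-1}\in C\}},
\]
with $C$ the narrow set, and then regroup. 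Either route leads to a correct ergodic limit; only the attribution of the ``diagonal'' arrival $V_k=(i,j)$ needs to be made explicit.
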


We can then adequate the arrival rates to the cumulative matching rates of the nodes. 
The following result can be proven similarly to 
Lemma \ref{lemma : taux d'arrivee = taux de depart des i}, 
\begin{lemma}\label{lemma : taux d'arrivee = taux de depart des i (cas biparti)}
For a bipartite matching model associated to any $(G=(\mathcal{V}_1\cup\mathcal{V}_2,\mathcal{E}),\Phi,\tilde \mu)$, such that 
$\tilde\mu=(\tilde\mu_1,\tilde\mu_2)\in \textsc{stab}_{\textsc{b}}(G,\Phi)$, we have that 
\begin{equation}
\label{eq:defThetaBip}\begin{cases}
\tilde\mu_1(i) &= \sum\limits_{j\in\mathcal{E}(i)} {\Theta}_{\textsc{b}}^{\Phi,\tilde\mu}[i,j],\mbox{ for all }i\in\maV_1;\\
\tilde\mu_2(i) &= \sum\limits_{j\in\mathcal{E}(i)} {\Theta}_{\textsc{b}}^{\Phi,\tilde\mu}[i,j],\mbox{ for all }i\in\maV_2.
\end{cases}\end{equation}
\end{lemma}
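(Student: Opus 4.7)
The plan is to mimic the proof of Lemma~\ref{lemma : taux d'arrivee = taux de depart des i}, replacing the Markov chain $\suite{W_n^{\Phi,\bar\mu}}$ by its bipartite counterpart $\suite{Y_n^{\Phi,\tilde\mu}}$, and using Proposition~\ref{prop : Mn[i,j]/n tend vers Theta[i,j] (cas biparti)} in place of Lemma~\ref{prop : Mn[i,j]/n tend vers Theta[i,j]}. Fix $i\in\maV_1$; the case $i\in\maV_2$ is strictly symmetric. Denote by $A_n(i)$ and $D_n(i)$ the number of arrivals and departures of $i$-items up to time $n$.

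First, I would treat the arrival rate. Since arrivals occur as couples of common distribution $\tilde\mu$ with first marginal $\tilde\mu_1$, we have $A_n(i)=\sum_{k=1}^n \mathbf 1_{\{V_k\in\{i\}\times\maV_2\}}$, so the strong law of large numbers yields $A_n(i)/n \to \tilde\mu_1(i)$ a.s. For the departure rate, observe that since $G$ is bipartite there are no self-loops, and any departure of an $i$-item occurs via a matching with exactly one $j$-item for some $j\in\maE(i)$ (regardless of whether the pair was matched on arrival or one was already stored), so
\[
\frac{D_n(i)}{n} = \sum_{j\in\maE(i)} \frac{M_n^{\textsc{b}}[i,j]}{n} \xrightarrow[n\to\infty]{\text{a.s.}} \sum_{j\in\maE(i)} \Theta_{\textsc{b}}^{\Phi,\tilde\mu}[i,j],
\]
by Proposition~\ref{prop : Mn[i,j]/n tend vers Theta[i,j] (cas biparti)}.

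The remaining step, which is the main obstacle, is to establish that $(A_n(i)-D_n(i))/n\to 0$ a.s., for if this holds the two previous limits can be identified and the lemma follows. To this end, note that the number of stored $i$-items in the buffer satisfies the conservation identity
\[
|Y_n^{\Phi,\tilde\mu}|_i = A_n(i) - D_n(i) + |Y_0^{\Phi,\tilde\mu}|_i,
\]
where $|y|_i$ counts the occurrences of $i$ in the buffer word $y$. If, on an event of positive probability, the normalized difference $(A_n(i)-D_n(i))/n$ had a nonzero limsup (resp.\ liminf), then $|Y_n^{\Phi,\tilde\mu}|_i$ would eventually stay strictly positive (resp.\ strictly negative, which is absurd since it is non-negative); in the positive case, the chain $\suite{Y_n^{\Phi,\tilde\mu}}$ would avoid from some rank on every state whose $i$-count is zero, contradicting positive recurrence since $\tilde\mu\in\textsc{stab}_{\textsc{b}}(G,\Phi)$. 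Thus $(A_n(i)-D_n(i))/n\to 0$ a.s., and comparing the two limits above yields the identity $\tilde\mu_1(i) = \sum_{j\in\maE(i)} \Theta_{\textsc{b}}^{\Phi,\tilde\mu}[i,j]$. The same argument applied to $i\in\maV_2$ gives the second equality.
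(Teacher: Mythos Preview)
Your proposal is correct and follows exactly the approach the paper intends: the paper does not spell out a proof of this lemma but simply states that it ``can be proven similarly to Lemma~\ref{lemma : taux d'arrivee = taux de depart des i}'', which is precisely what you do by replacing $\suite{W_n^{\Phi,\bar\mu}}$ with $\suite{Y_n^{\Phi,\tilde\mu}}$ and invoking Proposition~\ref{prop : Mn[i,j]/n tend vers Theta[i,j] (cas biparti)} in lieu of Lemma~\ref{prop : Mn[i,j]/n tend vers Theta[i,j]}. One minor remark: since you have already established that both $A_n(i)/n$ and $D_n(i)/n$ converge a.s., their difference converges a.s.\ as well, so you can speak directly of its limit rather than its limsup/liminf, matching the phrasing in the proof of Lemma~\ref{lemma : taux d'arrivee = taux de depart des i}.
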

\noindent 
Let us introduce two following probability distributions, 
\begin{definition}
For any fixed connected bipartite graph $G=(\maV_1\cup\maV_2,\maE)$ and any fixed family of weights $\alpha\in\maM(\maE)$, we define the conditional probability measures 
$\tilde\mu^\alpha_1\in~\overline{\mathcal{M}}(\mathcal{V}_1)$ and $\tilde\mu^\alpha_2\in\overline{\mathcal{M}}(\mathcal{V}_2)$ as follows: for any $i\in \maV$, 
$$\tilde \mu^\alpha_1(i)\isdef\frac{\sum\limits_{j\in\mathcal{E}(i)}\alpha_{i,j}}{\sum\limits_{l\in\mathcal{V}_1}\sum\limits_{j\in\mathcal{E}(l)} \alpha_{l,j}}
,\,i\in\maV_1
\quad \text{ and } \quad
\tilde\mu^\alpha_2(i)\isdef\frac{\sum\limits_{j\in\mathcal{E}(i)}\alpha_{i,j}}{\sum\limits_{l\in\mathcal{V}_2}\sum\limits_{j\in\mathcal{E}(l)} \alpha_{l,j}}\mathbf ,\,i\in\maV_2.$$
\end{definition}
\noindent
As $(\mathcal{V}_1,\mathcal{V}_2)$ realize a bipartition of $\mathcal{V}$, we can deduce that 
\begin{equation}\label{eq:egalites mu alpha avec mu alpha 1 et mu alpha 2}
\forall i\in\mathcal{V}_1, \; \bar\mu^\alpha(i)=\frac{\tilde\mu^\alpha_1(i)}{2} \quad \text{and} \quad \forall j\in\mathcal{V}_2, \; \bar\mu^\alpha(j)=\frac{\tilde\mu^\alpha_2(j)}{2},
\end{equation}
and thus
\begin{equation}\label{eq:egalites mu alpha avec mu alpha 1 et mu alpha 2 (2)}
\bar\mu^\alpha(\maV_1)=\bar\mu^\alpha(\maV_2)=\frac{1}{2} \quad \text{and} \quad \mu^\alpha(\maV_1)=\mu^\alpha(\maV_2)=\frac{\mu^\alpha(\mathcal{V})}{2}.
\end{equation}

\begin{corollary}\label{coroll : NcondB(G) inclus dans l'ensemble des produits tensoriels de mesures a poids (cas biparti)}
Let $G=(\mathcal{V}_1\cup\mathcal{V}_2,\mathcal{E})$ be a connected bipartite graph. 
Then, for any admissible matching policy $\Phi$, we get that 
$$\textsc{stab}_{\textsc{b}}(G,\Phi)\subset\left\{\tilde\mu=(\tilde\mu^\alpha_1,\tilde\mu^\alpha_2)\in \overline{\maM}(\maV_1\times\maV_2): \alpha\in\maM^+(\maE)\right\}.$$
\end{corollary}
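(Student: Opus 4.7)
The plan is to adapt verbatim the proof of Corollary~\ref{cor:stabpoids} to the bipartite setting, replacing matching rates on multigraphs by the bipartite matching rates $\Theta_{\textsc{b}}^{\Phi,\tilde\mu}$ provided by Proposition~\ref{prop : Mn[i,j]/n tend vers Theta[i,j] (cas biparti)}. Fix a connected bipartite graph $G=(\maV_1\cup\maV_2,\maE)$, an admissible matching policy $\Phi$, and a probability measure $\tilde\mu=(\tilde\mu_1,\tilde\mu_2)\in \textsc{stab}_{\textsc{b}}(G,\Phi)$ (if the stability region is empty, the inclusion is vacuous). The construction I would carry out is to set, for every edge $(i,j)\in\maE$,
\[
\alpha_{i,j}\;\isdef\;\Theta_{\textsc{b}}^{\Phi,\tilde\mu}[i,j],
\]
and then verify that $(\tilde\mu^\alpha_1,\tilde\mu^\alpha_2)=(\tilde\mu_1,\tilde\mu_2)$.

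Symmetry $\alpha_{i,j}=\alpha_{j,i}$ is immediate from the expression in Proposition~\ref{prop : Mn[i,j]/n tend vers Theta[i,j] (cas biparti)}, since that expression depends on the edge and not on any orientation of it. The identities $\tilde\mu_1^\alpha=\tilde\mu_1$ and $\tilde\mu_2^\alpha=\tilde\mu_2$ are then obtained by a direct computation using Lemma~\ref{lemma : taux d'arrivee = taux de depart des i (cas biparti)}: for any $i\in\maV_1$ we have $\sum_{j\in\maE(i)}\alpha_{i,j}=\tilde\mu_1(i)$, and summing this over $i\in\maV_1$ yields the normalizing constant $\sum_{l\in\maV_1}\sum_{j\in\maE(l)}\alpha_{l,j}=1$. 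Plugging these two identities into the definition of $\tilde\mu_1^\alpha$ gives $\tilde\mu_1^\alpha(i)=\tilde\mu_1(i)$, and the same calculation on the $\maV_2$ side yields $\tilde\mu_2^\alpha=\tilde\mu_2$.

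The only delicate step is showing that $\alpha$ is strictly positive, so that it belongs to $\maM^+(\maE)$ and not merely to the set of non-negative weight families. For an edge $(i,j)\in\maE$ one needs $\Theta_{\textsc{b}}^{\Phi,\tilde\mu}[i,j]>0$, i.e.\ that the compatible pair $(i,j)$ is actually matched at a positive asymptotic rate. The plan here is to exploit the third term in the expression of $\Theta_{\textsc{b}}^{\Phi,\tilde\mu}[i,j]$, namely $\tilde\mu((i,j))\,\tilde\Pi^{\Phi,\tilde\mu}_W(A^{\textsc{b}}_{i\leftrightarrow j})$: the arrival weight $\tilde\mu((i,j))$ is positive because $\tilde\mu$ has full support, and the empty-buffer state lies in $A^{\textsc{b}}_{i\leftrightarrow j}$ (any arriving compatible couple found on an empty buffer is matched together irrespective of $\Phi$) and is visited with strictly positive stationary probability by the positive recurrent chain $\{Y_n^{\Phi,\tilde\mu}\}$. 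This is the part of the argument I expect to be the main obstacle, because it rests on the admissibility of the policy and on showing that the empty buffer is indeed positively recurrent (or, more generally, that some state in $A^{\textsc{b}}_{i\leftrightarrow j}$ is). Once this positivity is in hand, the above computation gives $\tilde\mu=(\tilde\mu^\alpha_1,\tilde\mu^\alpha_2)$ with $\alpha\in\maM^+(\maE)$, which is exactly the claimed inclusion.
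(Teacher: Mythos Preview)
Your proposal is correct and follows essentially the same approach as the paper: define $\alpha_{i,j}=\Theta_{\textsc{b}}^{\Phi,\tilde\mu}[i,j]$ and use Lemma~\ref{lemma : taux d'arrivee = taux de depart des i (cas biparti)} to verify $\tilde\mu_k=\tilde\mu_k^\alpha$ for $k=1,2$. You actually go further than the paper by explicitly justifying the strict positivity of $\alpha$ via the empty-buffer state lying in $A^{\textsc{b}}_{i\leftrightarrow j}$, a point the paper simply asserts without argument.
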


\begin{proof}
Fix a connected bipartite graph $G=(\mathcal{V}_1\cup\mathcal{V}_2, \mathcal{E})$ and an admissible policy $\Phi$, and let $\tilde\mu=(\tilde\mu_1,\tilde\mu_2) \in\textsc{stab}_{\textsc{b}}(G,\Phi)$, so that the bipartite matching model associated to $(G,\Phi,\tilde\mu)$ is stable. Let us also define the family of weights $\alpha^{\Phi,\tilde\mu}_{\textsc{b}}\in\maM^+(\maE)$, by
\begin{equation}
\label{eq:defalphamuBip}
\begin{array}{ccccc}
\alpha^{\Phi,\tilde\mu}_{\textsc{b}} & : & \mathcal{E} & \longrightarrow & \mathbb{R}_* \\
 & & (i,j) & \longmapsto & 
 {\Theta}_{\textsc{b}}^{\Phi,\tilde\mu}[i,j].
\end{array}\end{equation}
From Lemma \ref{lemma : taux d'arrivee = taux de depart des i (cas biparti)}, for all $i\in\maV_1$ we have that 
\begin{equation*}
\tilde \mu_1^{\alpha^{\Phi,\tilde\mu}_{\textsc{b}}}(i)
= \frac{\sum\limits_{j\in\mathcal{E}(i)}{\Theta}_{\textsc{b}}^{\Phi,\tilde\mu}[i,j]}{\sum\limits_{l\in\mathcal{V}_1}\sum\limits_{j\in\mathcal{E}(l)}{\Theta}_{\textsc{b}}^{\Phi,\tilde\mu}[l,j]}
= \frac{\tilde\mu_{1}(i)}{\sum\limits_{l\in\mathcal{V}_1}\tilde\mu_{1}(l)} 
= \tilde\mu_{1}(i).
\end{equation*} 
So we have $\tilde\mu_1=\tilde\mu_1^{\alpha^{\Phi,\tilde\mu}_{\textsc{b}}}$, and likewise $\tilde\mu_2=\mu_2^{\alpha^{\Phi,\tilde\mu}_{\textsc{b}}}$, which concludes the proof.
\end{proof}

\subsection{Proof of Theorem \ref{thm:mainBip}}
We can now turn to the proof of Theorem \ref{thm:mainBip}. 
Let $G=(\maV=\maV_1\cup\maV_2,\maE)$ be a connected bipartite graph. 
First observe that the equivalence ${(i) \iff (ii)}$ proven in Theorem \ref{thm:main} is not peculiar to the non-bipartite case, and remains valid 
in the present context. To complete the proof, we then show that ${(iii) \iff (iv)}$, ${(iv) \iff (v)}$, and then ${(ii) \implies (iii)}$ and ${(iv) \implies (ii)}$. 

\subsection*{${(iii) \iff (iv)}$}
We first show that 
\begin{equation}
\label{eq:contact}
\mathcal{N}_{\textsc{b}}(G) = 
\left\{\mu \in \maM\left(\maV\right) : 
\left\{
    \begin{array}{ll}
        \forall \,\maU_1\in\mathcal P(\maV_1)\setminus\{\emptyset,\maV_1\}, \; 
        \mu\left(\maU_1\right) < \mu\left(\maE\left(\maU_1\right)\right) \\
        \forall \,\maU_2\in\mathcal P(\maV_2)\setminus\{\emptyset,\maV_2\}, \; 
        \mu\left(\maU_2\right) < \mu\left(\maE\left(\maU_2\right)\right) \\
        \mu(\maV_1) = \mu(\maV_2)
            \end{array}
\right.
\right\}.
\end{equation}
The left inclusion being trivial, let us focus on the converse. 
Let $\mu$ be an element of the right-hand set, and let $\maU$ be a non-empty set that is strictly included in $\maV$ and different from $\maV_1$ and $\maV_2$. 
Let us first observe that it cannot be the case that $\maU\cap\maV_1=\maV_1$ and $\maU\cap\maV_2=\maV_2$, otherwise 
$\maU$ would coincide with $\maV$. Thus, assuming for instance that $\maU\cap\maV_2$ is strictly included in $\maV_2$ (the other case is symmetric), 
we have that $\mu\left(\maU\cap\maV_2\right) < \mu\left(\maE\left(\maU\cap\maV_2\right)\right)$. On the other hand, we also have that 
$\mu\left(\maU\cap\maV_1\right) < \mu\left(\maE\left(\maU\cap\maV_1\right)\right)$ whenever  
$\maU\cap\maV_1$ is strictly included in $\maV_1$, 
while, in the case where $\maU\cap\maV_1=\maV_1$, we get that 
$$\mu\left(\maU\cap\maV_1\right) =\mu(\maV_1)=\mu(\maV_2)= \mu(\maE(\maV_1))=\mu\left(\maE\left(\maU\cap\maV_1\right)\right).$$ 
We obtain in all cases that 
\begin{align*}
\mu(\maU) = \mu\left(\maU\cap\maV_1\right) +  \mu\left(\maU\cap\maV_2\right)
	   	        &< \mu\left(\maE\left(\maU\cap\maV_1\right)\right) + \mu\left(\maE\left(\maU\cap\maV_2\right)\right)\\
		        &= \mu\left(\maE\left(\maU\right)\cap\maV_2\right) + \mu\left(\maE\left(\maU\right)\cap\maV_1\right)=\mu(\maE\left(\maU\right)),
\end{align*}
hence $\mu$ is an element of $\mathcal{N}_{\textsc{B}}(G)$, which completes the proof of (\ref{eq:contact}).
In particular, Assertion $(iii)$ is equivalent to saying that $\mu$ belongs to the right-hand set of (\ref{eq:contact}). 

Now, from Theorem 7.1 of \cite{BGM13} we have that 
\begin{equation}
\label{eq:maxmlB}
\textsc{stab}_{\textsc{b}}(G,\textsc{ml})=\tilde{\mathcal{N}}_{\textsc{b}}(G),
\end{equation}
where 
\begin{multline*}
\tilde{\mathcal{N}}_{\textsc{b}}(G) \isdef 
\Biggl\{\tilde\mu=(\tilde\mu_1,\tilde\mu_2)\in \overline{\maM}\left(\maV_1\times\maV_2\right) :\\
   \left\{ \begin{array}{ll}
        \forall \maU_1\in\mathcal{P}(\maV_1)\backslash\{\emptyset,\maV_1\}, \; 
        \tilde\mu_1\left(\maU_1\right) < \tilde\mu_2\left(\maE\left(\maU_1\right)\right) \\
        \forall \maU_2\in\mathcal{P}(\maV_2)\backslash\{\emptyset,\maV_2\}, \; 
        \tilde\mu_2\left(\maU_2\right) < \tilde\mu_1\left(\maE\left(\maU_2\right)\right)
            \end{array}
\right.
\Biggl\}.
\end{multline*}
and we conclude by observing that $\mu\in\mathcal{N}_{\textsc{b}}(G)$ is equivalent to $\tilde \mu \in \tilde{\mathcal{N}}_{\textsc{b}}(G)$.

\subsection*{${(iv) \iff (v)}$} In view of the characterization \eqref{eq:contact}, the equivalence ${(iv) \iff (v)}$ is shown in Theorem 1 in \cite{AW14}. 

\subsection*{${(ii) \implies (iii)}$} 
Fix $\alpha\in\maM(\maE)$, and recall that for any subset $\mathcal{X}\subset\mathcal{V}$, we denote by $\mathcal{A}_{\mathcal{X}}\subset \mathcal E$ the set of edges of the graph having at least one extremity in $\mathcal X$. It suffices to show that $\mu^\alpha$ belongs to the right-hand side of (\ref{eq:contact}). For this, fix a set $\mathcal U_1\in\mathcal{P}(\mathcal V_1)\backslash\{\emptyset,\maV_1\}$. 
As in the proof of $(ii)\implies(iv)$ in Theorem \ref{thm:main}, it is immediate that 
$\mathcal{A}_{\mathcal{U}_1}\subset\mathcal{A}_{\mathcal{E}(\mathcal{U}_1)}$. 
On another hand, by the connectivity of $G$, there exists an edge $e$ 
connecting an element of $\mathcal{E}(\mathcal{U}_1)$ to an element of 
$\mathcal U_1^c\cap\maV_1$, otherwise we would have $\maE(\maE(\maU_1))=\maU_1$ and, since $\maU_1\in\mathbb{I}(G)$, $(\mathcal{U}_1,\mathcal{E}(\mathcal{U}_1))$ would form a bipartition of $G$, which is absurd since $\mathcal{U}_1\neq\mathcal{V}_1$. So we have that $\mathcal{A}_{\mathcal{U}_1}\varsubsetneq\mathcal{A}_{\mathcal{E}(\mathcal{U}_1)}$. Since $\maU_1$ is an independent set, the same inequalities as in (\ref{eq:cagen}) hold and we have that $\mu^\alpha(\mathcal{U}_1)<\mu^\alpha(\mathcal{E}(\mathcal{U}_1)).$

By a symmetric argument, we also have that $
\mu^\alpha(\mathcal{U}_2) 
<\mu^\alpha(\mathcal{E}(\mathcal{U}_2))$, for any set $\mathcal U_2\in\mathcal{P}(\mathcal V_2)\backslash\{\emptyset,\maV_2\}$. Using~\eqref{eq:egalites mu alpha avec mu alpha 1 et mu alpha 2 (2)}, this shows that $\mu^\alpha \in\mathcal N_{\textsc{b}}(G)$.

\subsection*{${(iv) \implies (ii)}$} 
Fix a measure $\tilde\mu\in \textsc{stab}_{\textsc{b}}(G,\textsc{ml})$. 
Gathering Corollary \ref{coroll : NcondB(G) inclus dans l'ensemble des produits tensoriels de mesures a poids (cas biparti)} for $\Phi=\textsc{ml}$ with (\ref{eq:maxmlB}), 
we obtain that 
\begin{equation}\label{eq:inclusion tilde N_B^1(G)}
\tilde{\mathcal N}_{\textsc{b}}(G)\subset\left\{\tilde\mu=(\tilde\mu^\alpha_1,\tilde\mu^\alpha_2)\in \overline{\maM}(\maV_1\times\maV_2): \alpha\in\maM^+(\maE)\right\}.
\end{equation}
For any $\mu\in\mathcal N_{\textsc{b}}(G)$, 
we get that $\tilde\mu\in\tilde{\mathcal N}_{\textsc{b}}(G)$, so in view of \eqref{eq:inclusion tilde N_B^1(G)}, we obtain that $\tilde\mu_1=\tilde\mu^\alpha_1$ and $\tilde\mu_2=\tilde\mu^\alpha_2$, for some $\alpha \in \maM^+(\maE)$.
In view of \eqref{eq:egalites mu alpha avec mu alpha 1 et mu alpha 2}, and the fact that $\mu\in\mathcal N_{\textsc{b}}(G)$, we get that $\mu=\mu(\mathcal{V})\mu^{\alpha}$, which amounts to writing that $\mu=\mu^{\check{\alpha}}$ where $\check{\alpha}=\frac{\mu(\mathcal{V})}{\mu{\alpha}(\mathcal{V})}\alpha$, and so the proof is concluded.

\section{Proof of Theorem \ref{thm:mainqueue}}
\label{sec:proofqueueing}
%
\subsection{Proof using Theorem~\ref{thm:main}}
We start with the following result,
\begin{proposition}\label{prop:mainqueue}
For any connected graph $G=(\maV,\maE)$ and any non-trivial partition $\maV_1\cup \maV_2$ of $\maV$, we have
$${\mathcal N}_{\maV_1}(G) = \mathcal W(\hat G),$$
where $\hat G=(\maV,\hat{\maE})$ is the multigraph obtained from $G$ by adding a self-loop at any element of $\maV_2$. 
\end{proposition}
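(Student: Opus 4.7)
The plan is to deduce this equality from Theorem~\ref{thm:main} applied to the augmented multigraph $\hat G$, by showing that the Hall-type condition Theorem~\ref{thm:main} attaches to $\hat G$ coincides with the one defining $\mathcal N_{\maV_1}(G)$. First I would note that $\hat G$ is a connected multigraph, since $G$ is, and that $\hat G$ is not bipartite, because $\maV_2 \neq \emptyset$ (the partition is non-trivial) and any single self-loop rules out a bipartition. Theorem~\ref{thm:main}, and in particular the equivalence $(ii) \iff (iii)$, therefore gives
\[
\mathcal W(\hat G) \;=\; \mathcal N(\hat G) \isdef \bigl\{\mu \in \maM(\maV) : \mu(\maU) < \mu(\hat\maE(\maU)) \text{ for every } \maU \in \mathcal P(\maV) \setminus \{\emptyset, \maV\}\bigr\},
\]
so it is enough to establish the set equality $\mathcal N(\hat G) = \mathcal N_{\maV_1}(G)$.

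The inclusion $\mathcal N(\hat G) \subseteq \mathcal N_{\maV_1}(G)$ is immediate: any non-empty $\maU_1 \subseteq \maV_1$ is a proper subset of $\maV$ (since $\maV_2 \neq \emptyset$) and satisfies $\hat\maE(\maU_1) = \maE(\maU_1)$ (since $\maU_1 \cap \maV_2 = \emptyset$), so the $\hat G$-inequality reduces to the inequality defining $\mathcal N_{\maV_1}(G)$. For the reverse inclusion, I would fix $\mu \in \mathcal N_{\maV_1}(G)$ and a non-empty proper subset $\maU$ of $\maV$, and decompose $\maU = \maU_1 \cup \maU_2$ with $\maU_k = \maU \cap \maV_k$. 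Because every $v \in \maV_2$ carries a self-loop in $\hat G$, one has $\maU_2 \subseteq \hat\maE(\maU)$, hence $\maU \setminus \hat\maE(\maU) \subseteq \maU_1$, and setting $\mathcal I := \maU_1 \setminus \hat\maE(\maU)$ yields the balance identity
\[
\mu(\hat\maE(\maU)) - \mu(\maU) \;=\; \mu(\hat\maE(\maU) \setminus \maU) - \mu(\mathcal I).
\]

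The two structural observations needed to conclude are: (i) $\mathcal I$ is an independent subset of $\maV_1$, because any $G$-edge between two vertices of $\mathcal I$ would place one of them inside $\hat\maE(\maU)$, a contradiction; and (ii) $\maE(\mathcal I) \subseteq \hat\maE(\maU) \setminus \maU$, as the inclusion in $\hat\maE(\maU)$ follows from $\mathcal I \subseteq \maU$, while any vertex in $\maE(\mathcal I) \cap \maU$ would, via the same edge, drag a vertex of $\mathcal I$ back into $\hat\maE(\maU)$. If $\mathcal I \neq \emptyset$, applying the hypothesis $\mu \in \mathcal N_{\maV_1}(G)$ to $\mathcal I$ itself gives $\mu(\mathcal I) < \mu(\maE(\mathcal I)) \leq \mu(\hat\maE(\maU) \setminus \maU)$, so the displayed difference is strictly positive. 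If instead $\mathcal I = \emptyset$, then $\maU \subseteq \hat\maE(\maU)$, and by connectivity of $G$ combined with $\maU \neq \maV$ there exists an edge from $\maU$ to $\maV \setminus \maU$, yielding a vertex in $\maE(\maU) \setminus \maU \subseteq \hat\maE(\maU) \setminus \maU$ to which $\mu$ assigns positive mass by full support.

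I expect the reverse inclusion to be the real content. The difficulty is that one must upgrade the hypothesis, which is a Hall-type inequality only over subsets of $\maV_1$, to a Hall-type inequality over every non-empty proper subset of $\maV$ in the much larger multigraph $\hat G$. The whole point of adding self-loops at $\maV_2$ is structural: they let the $\maV_2$-part of $\maU$ ``absorb itself'' into $\hat\maE(\maU)$, so that the residual deficit $\maU_1 \setminus \hat\maE(\maU)$ is automatically an independent subset of $\maV_1$, precisely the type of set to which $\mathcal N_{\maV_1}(G)$ applies. This mirrors, and makes use of, the reduction from $(iv)$ to $(iii)$ already carried out in Theorem~\ref{thm:main}.
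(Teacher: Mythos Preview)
Your proof is correct and follows essentially the same route as the paper's: reduce via Theorem~\ref{thm:main} to showing $\mathcal N(\hat G)=\mathcal N_{\maV_1}(G)$, get the easy inclusion directly, and for the other use the balance identity $\mu(\hat\maE(\maU))-\mu(\maU)=\mu(\hat\maE(\maU)\setminus\maU)-\mu(\mathcal I)$ with $\mathcal I=\maU_1\setminus\hat\maE(\maU)$, the inclusion $\maE(\mathcal I)\subseteq\hat\maE(\maU)\setminus\maU$, and the same case split on $\mathcal I=\emptyset$ versus $\mathcal I\neq\emptyset$. The only cosmetic difference is that you record the independence of $\mathcal I$, which is true but not needed since the defining condition of $\mathcal N_{\maV_1}(G)$ applies to \emph{all} non-empty subsets of $\maV_1$.
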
 
\begin{proof}[Proof of Proposition \ref{prop:mainqueue}]
In view of Theorem \ref{thm:main}, it suffices to show that 
\begin{equation}
\label{eq:truc0}
{\mathcal N}_{\maV_1}(G)={\mathcal N}(\hat G).
\end{equation}
The right inclusion in (\ref{eq:truc0}) is immediate: if we let $\mu\in{\mathcal N}(\hat G)$, and fix a non-empty subset $\maU_1 \subset \maV_1$, then 
as $\maV_2\ne \emptyset$, we get that $\maU_1 \subsetneq \maV$ and thus we readily obtain that 
$$\mu(\maU_1) <\mu (\hat\maE(\maU_1))=\mu(\maE(\maU_1)),$$
where the second equality follows from the definition of $\hat\maE$. Hence, $\mu \in {\mathcal N}_{\maV_1}(G)$. 
We now turn to the left inclusion. Let $\mu\in {\mathcal N}_{\maV_1}(G)$ and 
fix a set $\mathcal U \in\mathcal{P}(\maV)\backslash\{\emptyset,\maV\}$.
Denote in this proof, $\maU_1\isdef\maU\cap \maV_1$ and $\maU_2\isdef\maU\cap \maV_2$. 
Reasoning exactly as in the argument leading to (\ref{eq:truc1}) and (\ref{eq:truc2}), we obtain that 
\begin{align}\label{eq:truc3}
\mu(\hat\maE(\maU))-\mu(\maU) 
&=\mu(\hat\maE(\maU)\cap \maU^c)-\mu(\maU_1 \cap (\hat\maE(\maU))^c);\\
\label{eq:truc4}
\hat\maE(\maU_1 \cap  
(\hat\maE(\maU))^c) &\subset \hat\maE(\mathcal U)\cap \maU^c.
\end{align}
Now, by the very definition of $\hat G$ we clearly have that 
\[\maU_1 \cap  (\hat\maE(\maU))^c= \maU_1 \cap  (\maE(\maU))^c\quad\mbox{ and }\quad\hat\maE(\maU_1 \cap  
(\hat\maE(\maU))^c)=\maE(\maU_1 \cap  (\maE(\maU))^c).\] 
Consequently, if $\maU_1 \cap  
\left(\maE(\maU)\right)^c\neq\emptyset$, as $\mu$ is an element of 
${\mathcal N}_{\maV_1}(G)$ we get that  
\begin{equation*}
\mu(\maU_1 \cap  (\hat\maE(\maU))^c)= \mu\left(\maU_1 \cap  
\left(\maE(\maU)\right)^c\right) < \mu\left(\maE\left(\maU_1 \cap  \left(\maE(\maU)\right)^c\right)\right)=\mu(\hat\maE(\maU_1 \cap  
(\hat\maE(\maU))^c)).
\end{equation*}
Gathering this with (\ref{eq:truc3}) and (\ref{eq:truc4}), we conclude that 
\[\mu(\hat\maE(\maU))-\mu(\maU)>0,\]
which completes the proof of the left inclusion in (\ref{eq:truc0}), since the case $\maU_1 \cap \left(\maE(\maU)\right)^c=\emptyset$ is solved by~\eqref{eq:truc3}, the fact that $\maU \subsetneq \maV$ and the connectivity of $\hat G$.
\end{proof}

For any finite set $A$, for two measures $\mu,\mu'$ of $\mathcal{M}(A)$, and for a subset $B \subset A$, we denote $\mu \prec_B \mu'$ whenever $\mu(i)<\mu'(i)$ for any $i\in B$, and 
$\mu(i)=\mu'(i)$ for any $i\in A\setminus B$. 
For any $\maV'\subset \maV$, we define the set 
\begin{align*}
\mathcal W_{\prec,\maV'}(G) &\isdef\left\{ \mu\in \maM(\maV)\,:\, \mu^\alpha\prec_{\maV'} \mu\mbox{ for some }\alpha\in\mathcal{M}(\maE)\right\}.
\end{align*}

\noindent We can now prove Theorem \ref{thm:mainqueue},
\begin{proof}[Proof of Theorem \ref{thm:mainqueue}]
From Proposition \ref{prop:mainqueue}, it is enough to show that 
$$\mathcal W(\hat G)=\mathcal W_{\prec,\maV_2}(G).$$
Regarding the left inclusion, for any $\mu\in \mathcal W(\hat G)$, letting $\hat \alpha \in \mathcal M^+(\hat {\mathcal E})$ be such that 
$\mu=\mu^{\hat \alpha}$, and $\alpha\in\mathcal M^+(\mathcal E)$ be the restriction of $\hat\alpha$ to $\mathcal E$ we get that
$\mu(i)=\mu^{\hat\alpha}(i)=\mu^\alpha(i)$ for all $i\in\maV_1$, while for any $i\in\maV_2$, 
we obtain $$\mu(i)=\mu^{\hat{\alpha}}(i) = \hat\alpha_{i,i}+ \sum_{j\ne i:\,j\in\maE(i)}\hat\alpha_{i,j}>\sum_{j\ne i:\,j\in\maE(i)}\alpha_{i,j}=\mu^\alpha(i),$$
hence $\mu \in  \mathcal W_{\prec,\maV_2}(G).$ As for the converse, let 
$\mu \in  \mathcal W_{\prec,\maV_2}(G)$, and $\alpha\in\mathcal M^+(\maE)$ be such that 
$\mu^\alpha \prec_{\maV_2} \mu$. Then, setting $\hat\alpha_{i,j}=\alpha_{i,j}$ for all $i\ne j$ and $\hat\alpha_{i,i} = \mu(i)-\mu^\alpha(i)$ for all 
$i\in\maV_2$, we easily retrieve that $\mu=\mu^{\hat\alpha}$. Hence $\mu\in \mathcal W(\hat G)$, which completes the proof. 
\end{proof}

\subsection{Bipartite case: proof using flow theory}
Interestingly enough, in the particular case where $G$ is bipartite of bipartition $\maV_1 \cup \maV_2$, 
Theorem \ref{thm:mainqueue} is in fact reminiscent of a simple flow argument. Hereafter, we give an alternative and independent proof 
of this result in the bipartite case, using flow network theory, see \cite{FF15}. 
We start with the following result,
\begin{proposition}\label{prop:flow graph}
For any connected bipartite graph $G=(\maV=\maV_1\cup \maV_2,\maE)$ and any measure $\mu\in\mathcal{N}_{\mathcal{V}_1}^+(G)$, there exists a $s-t$ flow graph associated to $G$, denoted by $\stackrel{\rightarrow}{G}\;=\left(\stackrel{\rightarrow}{\mathcal{V}},\stackrel{\rightarrow}{\mathcal{E}}\right)$, and a $s-t$ flow of $\stackrel{\rightarrow}{G}$, denoted by $\overline{f}$, such that:
\begin{equation}\label{eq:egalite f barre}
\sum\limits_{w\in\stackrel{\rightarrow}{\mathcal{V}}} \overline{f}(s,w)
=\mu(\mathcal{V}_1).
\end{equation}
\end{proposition}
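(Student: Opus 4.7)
The plan is to prove this by translating the Hall-type condition $\mu\in\mathcal N_{\maV_1}(G)$ into a minimum $s$--$t$ cut computation, and then invoking the max-flow/min-cut theorem to produce the required flow. Concretely, I would build $\stackrel{\rightarrow}{G}$ as follows: set $\stackrel{\rightarrow}{\maV}=\maV\cup\{s,t\}$, and take as directed edges (i) $(s,i)$ with capacity $\mu(i)$ for each $i\in\maV_1$, (ii) for each $(i,j)\in\maE$ with $i\in\maV_1,j\in\maV_2$, the arc $(i,j)$ with capacity $+\infty$ (or any constant exceeding $\mu(\maV_1)$, to remain in the standard finite-capacity framework), and (iii) $(j,t)$ with capacity $\mu(j)$ for each $j\in\maV_2$. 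By Ford--Fulkerson's theorem, the value of a maximum $s$--$t$ flow equals the minimum capacity of an $s$--$t$ cut, so the goal reduces to showing that this min-cut equals $\mu(\maV_1)$.

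For the upper bound, the trivial cut $S=\{s\}$, $T=\stackrel{\rightarrow}{\maV}\setminus\{s\}$ severs exactly the arcs leaving the source and has capacity $\sum_{i\in\maV_1}\mu(i)=\mu(\maV_1)$. For the matching lower bound, pick any cut $(S,T)$ with $s\in S$, $t\in T$, and set $A=S\cap\maV_1$ and $B=S\cap\maV_2$. Since the interior arcs have capacity large enough to prohibit them from being cut (else the cut capacity exceeds $\mu(\maV_1)$ trivially), we may assume $\maE(A)\subset B$. The capacity of such a cut is $\mu(\maV_1\setminus A)+\mu(B)$. If $A=\emptyset$, this is $\mu(\maV_1)+\mu(B)\geq\mu(\maV_1)$. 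Otherwise, $A$ is a non-empty subset of $\maV_1$, and the hypothesis $\mu\in\mathcal N_{\maV_1}(G)$ gives $\mu(B)\geq\mu(\maE(A))>\mu(A)$, whence $\mu(\maV_1\setminus A)+\mu(B)>\mu(\maV_1\setminus A)+\mu(A)=\mu(\maV_1)$.

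Combining both bounds, the min-cut equals $\mu(\maV_1)$. By max-flow/min-cut there exists an $s$--$t$ flow $\overline f$ on $\stackrel{\rightarrow}{G}$ of value $\mu(\maV_1)$; since the value of a flow equals the net flow out of the source and no arc enters $s$, this reads $\sum_{w\in\stackrel{\rightarrow}{\maV}}\overline f(s,w)=\mu(\maV_1)$, which is the desired equality \eqref{eq:egalite f barre}.

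The only mildly delicate point is the handling of the ``uncrossable'' interior edges: one must verify that they can safely be given a finite but sufficiently large capacity (e.g.\ $\mu(\maV_1)+1$) so that any min-cut automatically respects $\maE(A)\subset B$, which makes the case analysis above exhaustive. Apart from this routine technicality, the proof is a direct application of the classical duality between Hall's marriage condition and maximum flows in bipartite networks.
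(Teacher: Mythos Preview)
Your proof is correct and follows the same max-flow/min-cut strategy as the paper: build the standard $s$--$t$ network on $G$, show the minimum cut equals $\mu(\maV_1)$ by comparing an arbitrary cut to the trivial one $\{s\}$, and invoke Ford--Fulkerson. Your cut analysis (setting $A=S\cap\maV_1$, $B=S\cap\maV_2$, reducing to $\maE(A)\subset B$, then using $\mu(B)\geq\mu(\maE(A))>\mu(A)$) is exactly the content of the paper's Lemma~\ref{lemma:minimal s-t cut of G}, only spelled out more explicitly.

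The one substantive difference is in the sink-side capacities. You put capacity $\mu(j)$ on each arc $(j,t)$, whereas the paper first chooses an auxiliary measure $\delta\in\maM(\maV_2)$ with $\delta(j)<\mu(j)$ for every $j$ that still satisfies $\mu(\maU_1)<\delta(\maE(\maU_1))$ for all nonempty $\maU_1\subset\maV_1$, and uses $\delta(j)$ as the capacity of $(j,t)$. For Proposition~\ref{prop:flow graph} itself this is immaterial---your simpler network already gives a flow of value $\mu(\maV_1)$. But the paper's choice is not gratuitous: in the subsequent proof of Theorem~\ref{thm:mainqueue}, one sets $\alpha_{i,j}=\overline f(i,j)$ and needs the \emph{strict} inequality $\mu^\alpha(j)=\sum_{i}\overline f(i,j)=\overline f(j,t)\leq c(j,t)<\mu(j)$ for every $j\in\maV_2$. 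With your capacities $c(j,t)=\mu(j)$, the max flow could saturate some sink arcs, yielding only $\mu^\alpha(j)\leq\mu(j)$, which is not enough for membership in $\mathcal W_{\prec,\maV_2}(G)$. So your construction proves the proposition as stated, but if you intend to carry it into the proof of Theorem~\ref{thm:mainqueue} you will need the $\delta$-trick (or an equivalent perturbation) at that stage.
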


\begin{proof}[Proof of Proposition~\ref{prop:flow graph}]
Let consider a connected bipartite graph $G=(\maV=\maV_1\cup \maV_2,\maE)$ and a measure $\mu\in\mathcal{N}_{\mathcal{V}_1}^+(G)$.\\ It then allows to consider a measure $\delta\in\mathcal{M}(\mathcal{V}_2)$ such that:
\begin{equation}\label{eq:construction du vecteur Delta}
\left\{
    \begin{array}{ll}
        \forall \,\mathcal U_1\in \mathcal P(\maV_1)\setminus\{\emptyset\}, \; \mu(\maU_1) < \delta(\maE(\maU_1)) ; \\
        \forall j\in\mathcal{V}_2,\; \delta(j)<\mu(j).
    \end{array}
\right.
\end{equation}
Now, let construct a $s-t$ flow graph $\stackrel{\rightarrow}{G}\;=\left(\stackrel{\rightarrow}{\mathcal{V}},\stackrel{\rightarrow}{\mathcal{E}}\right)$ associated to $G$ as follows:\\
- set a "source", denoted by $s$, and add the edges between $s$ and each vertex of $\maV_1$;\\
- set a "well", denoted by $t$, and add the edges between each vertex of $\maV_2$ and $t$;\\
- for all $i\in\maV_1$ and $j\in\maV_2$, let orient the edges from $s$ to $i$, from $j$ to $t$ and from $i$ to $j$, for all edge $(i,j)\in\mathcal{E}$;\\
- let associate a capacity $c(.,.)$ to each edge as follows: for all $i\in\maV_1$ and $j\in\maV_2$, we set $c(s,i)\isdef\mu(i)$, $c(j,t)\isdef\delta(j)$ and $c(i,j)\isdef +\infty$, for all edge $(i,j)\in\mathcal{E}$.\\
\\
In order to use the max-flow/min-cut theorem, we need the following result,

\begin{lemma}\label{lemma:minimal s-t cut of G}
By defining $\widetilde{\mathscr{S}}\isdef\{s\}$ and $\widetilde{\mathscr{T}}\isdef \;\stackrel{\rightarrow}{\mathcal{V}}\backslash\{s\}$, we have:
$$\underset{(\mathscr{S},\mathscr{T})\in\mathfrak{C}_{s,t}\left(\stackrel{\rightarrow}{G}\right)}{\min} c(\mathscr{S},\mathscr{T})
=c\left(\widetilde{\mathscr{S}},\widetilde{\mathscr{T}}\right)
=\mu(\mathcal{V}_1),$$
where $\mathfrak{C}_{s,t}\left(\stackrel{\rightarrow}{G}\right)$ is the set of all the $s-t$ cuts of $\stackrel{\rightarrow}{G}$.
\end{lemma}

\begin{proof}[Proof of Lemma~\ref{lemma:minimal s-t cut of G}]
By considering a $s-t$ cut $\left(\mathscr{S},\mathscr{T}\right)$ of $\stackrel{\rightarrow}{G}$ and with the help of~\eqref{eq:construction du vecteur Delta}, the proof consists in verifying that $c\left(\widetilde{\mathscr{S}},\widetilde{\mathscr{T}}\right)\leq c(\mathscr{S},\mathscr{T})$ in each following case:\\
- $\mathcal{V}_1\cap\mathscr{S}=\emptyset$ and $\mathcal{V}_2\cap\mathscr{S}=\emptyset$;\\
- $\mathcal{V}_1\cap\mathscr{S}\neq\emptyset$ and [$(\mathcal{V}_2\cap\mathscr{T})\cap\mathcal{E}(\mathcal{V}_1\cap\mathscr{S})\neq\emptyset$ or $(\mathcal{V}_2\cap\mathscr{S})\cap\mathcal{E}(\mathcal{V}_1\cap\mathscr{T})\neq\emptyset$];\\
- $\mathcal{V}_1\cap\mathscr{S}\neq\emptyset$ and [$(\mathcal{V}_2\cap\mathscr{T})\cap\mathcal{E}(\mathcal{V}_1\cap\mathscr{S})=\emptyset$ and $(\mathcal{V}_2\cap\mathscr{S})\cap\mathcal{E}(\mathcal{V}_1\cap\mathscr{T})=\emptyset$];\\
- $\mathcal{V}_2\cap\mathscr{S}\neq\emptyset$ and [$(\mathcal{V}_1\cap\mathscr{T})\cap\mathcal{E}(\mathcal{V}_2\cap\mathscr{S})\neq\emptyset$ or $(\mathcal{V}_1\cap\mathscr{S})\cap\mathcal{E}(\mathcal{V}_2\cap\mathscr{T})\neq\emptyset$];\\
- $\mathcal{V}_2\cap\mathscr{S}\neq\emptyset$ and [$(\mathcal{V}_1\cap\mathscr{T})\cap\mathcal{E}(\mathcal{V}_2\cap\mathscr{S})=\emptyset$ and $(\mathcal{V}_1\cap\mathscr{S})\cap\mathcal{E}(\mathcal{V}_2\cap\mathscr{T})=\emptyset$].
\end{proof}

The proof of Proposition~\ref{prop:flow graph} is then concluded by combining Lemma~\ref{lemma:minimal s-t cut of G} with the well-known max-flow/min-cut theorem.
\end{proof}

\noindent We are then able to retrieve the proof of Theorem~\ref{thm:mainqueue} in the bipartite case, 

\begin{proof}[Proof of Theorem~\ref{thm:mainqueue} for $G$ bipartite]
Let us consider a connected bipartite graph $G=(\maV=\maV_1\cup \maV_2,\maE)$. 
In order to prove the first inclusion, we fix a measure $\mu\in\mathcal{N}_{\mathcal{V}_1}^+(G)$, and define the non-negative measure $\alpha^{\overline{f}}\in\overline{\mathcal{M}}(\mathcal{E})$, by
$$\forall (i,j)\in\mathcal{E},\; \alpha^{\overline{f}}_{i,j}\isdef\overline{f}(i,j),$$
where $\overline{f}$ is the flow of $\stackrel{\rightarrow}{G}$, given by Proposition~\ref{prop:flow graph}. 
On the one hand, by the very structure of $\stackrel{\rightarrow}{G}$ and from~\eqref{eq:construction du vecteur Delta}, we have that 
$$\forall j\in\mathcal{V}_2,\; \mu^{\alpha^{\overline{f}}}(j)=\displaystyle\sum\limits_{i\in\mathcal{V}_1} \alpha^{\overline{f}}_{i,j} < \mu(j).$$
On the other hand, it can be proven that 
\begin{equation}\label{eq : f barre(s,u)=lambda_u}
\forall i\in\mathcal{V}_1,\; \overline{f}(s,i)=\mu(i).
\end{equation}
Indeed, the inequality ``$\leq$'' follows from the capacity constraint, whereas the converse inequality follows by contradiction, 
using ~\eqref{eq:egalite f barre}. 
It then follows from~\eqref{eq : f barre(s,u)=lambda_u} and by the very definition of $\alpha^{\overline{f}}$, that 
$$\forall i\in\mathcal{V}_1, \mu^{\alpha^{\overline{f}}}(i)=\sum\limits_{j\in\mathcal{V}_2} \alpha^{\overline{f}}_{i,j} = \mu(i).$$
In order to prove the converse inclusion, let us consider a measure $\mu\in\mathcal{W}^+_{\prec,\maV_2}(G)$. 
Then there exists a non-negative measure $\alpha\in{\mathcal{M}}^{\ge 0}(\mathcal{E})$ such that
\begin{equation}\label{eq : conditions sur le poids alpha}
\left\{
    \begin{array}{ll}
    \forall i\in\mathcal{V}_1,\; \mu^\alpha(i)=\mu(i) \\
    \forall j\in\mathcal{V}_2,\; \mu^\alpha(j) < \mu(j)
    \end{array}.
\right.
\end{equation}
Let us consider a non-empty subset $\mathcal{U}_1\in\mathcal{P}(\mathcal{V}_1)\backslash\{\emptyset\}$. 
Then, by~\eqref{eq : conditions sur le poids alpha} and since $\alpha\in{\mathcal{M}}^{\ge 0}(\mathcal{E})$, we have that 
$$\mu(\mathcal{U}_1) 
=\mu^\alpha(\mathcal{U}_1)
=\sum\limits_{j\in\mathcal{E}(\mathcal{U}_1)}\sum\limits_{i\in\mathcal{U}_1}\alpha_{i,j}
\leq \sum\limits_{j\in\mathcal{E}(\mathcal{U}_1)}\sum\limits_{i\in\mathcal{V}_1}\alpha_{i,j} 
=\mu{\alpha}(\mathcal{E}(\mathcal{U}_1)) \\
<\mu(\mathcal{E}(\mathcal{U}_1)),$$
which concludes the proof.
\end{proof}

\section{About uniqueness of the matching rates}
\label{sec:appli}

In Lemma \ref{lemma : taux d'arrivee = taux de depart des i} (resp., Lemma \ref{lemma : taux d'arrivee = taux de depart des i (cas biparti)}), 
we have proven that the 
arrival rates of stable general (respectively, bipartite) stochastic matching models are given as explicit functions of the matching rates of the various edges - see \eqref{eq:defTheta} (resp., \eqref{eq:defThetaBip}). 
Specifically, recalling \eqref{eq:deftildemu}, 
both systems of equations \eqref{eq:defTheta} and \eqref{eq:defThetaBip} can be rewritten 
under the generic form 
\begin{equation}
\label{eq:S}
\tag{S}
\bar\mu(i)=\sum\limits_{j\in\mathcal{E}(i)} \alpha_{i,j},\quad i\in \maV,
\end{equation}
where the weights $\alpha_{i,j}$, $i,j\in \maE$, represent the matching rates in the various cases. 
These weights depend on the matching policy, and are not unique in general. However, for various (multi-) graph geometries, they are uniquely defined (and thereby, {\em independent of of the matching policy}), as we will show hereafter. 

In what follows, for a rooted tree $G=(\maV,\maE,r)$ of root $r$ having at least two nodes, we denote by $d:=d(G)$, the depth of $G$. 
For any $\ell \in \llbracket 1,d \rrbracket$, we let $n_\ell$ be the number of elements of generation $\ell$, and label by 
$\ell_1,...,\ell_{n_\ell}$, the nodes of generation $\ell$ in an arbitrary manner. In that way, nodes $d_{1},...,d_{n_d}$ are the leaves of $G$. 
 For any $\ell\in\llbracket 1,d \rrbracket$ and $i\in\llbracket 1,n_\ell\rrbracket$, we denote by $f(\ell_i)$, the father of node $\ell_i$. Notice that by construction, we have $f(1_i)=r$ for all $i\in\llbracket 1,n_1 \rrbracket$. Last, for any $\ell\in\llbracket 1,d \rrbracket$ 
 and $i\in\llbracket 1,n_\ell\rrbracket$, we denote by $\mathscr S(\ell_i)$ and $\mathscr D(\ell_i)$, the sets of sons and of descendants of $\ell_i$, respectively. 
 Notice that the above sets are empty if, and only if $\ell=d$, i.e. $\ell_i$ is a leaf.

\begin{lemma}
\label{lemma:solrootedtree}
Let $G=(\maV,\maE,r)$ be a rooted tree having at least two nodes, and $\bar\mu\in\overline{\maM}(G)$.  
Then, the system of equations 
 \begin{equation}
\label{eq:Sstar}
\tag{S$^*$}
\bar\mu(i)=\sum\limits_{j\in\mathcal{E}(i)} \alpha_{i,j},\quad i\in \maV\setminus\{r\}
\end{equation}
admits the following unique solution on $\overline{\mathcal W}(G)$:
\begin{equation}
\label{eq:solSstar}
\alpha_{\ell_i,f(\ell_i)}=\bar\mu(\ell_i)+\displaystyle\sum_{k_j\in \mathscr D(\ell_i)} (-1)^{k-\ell}\bar\mu(k_j),\quad\ell\in\llbracket 1,d \rrbracket,\, i\in \llbracket 1,n_\ell \rrbracket,
\end{equation}
where sums over empty sets are understood as null. 
\end{lemma}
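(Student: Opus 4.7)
The plan is to argue by backward induction on the generation of the nodes, starting from the leaves and moving up to the root. The key structural observation is that in a rooted tree, each non-root node $\ell_i$ at generation $\ell$ has exactly one neighbor at generation $\ell-1$, its father $f(\ell_i)$, while all its remaining neighbors are its sons $\mathscr S(\ell_i)$, located at generation $\ell+1$. The equation of \eqref{eq:Sstar} at $\ell_i$ therefore rewrites as
\[
\alpha_{\ell_i,f(\ell_i)} \;=\; \bar\mu(\ell_i) \;-\; \sum_{s\in\mathscr S(\ell_i)}\alpha_{\ell_i,s}.
\]
At a leaf $d_i$ the set $\mathscr S(d_i)$ is empty, which immediately fixes $\alpha_{d_i,f(d_i)}=\bar\mu(d_i)$; at any intermediate generation $\ell<d$, the displayed equation determines $\alpha_{\ell_i,f(\ell_i)}$ uniquely from $\bar\mu(\ell_i)$ and the $\alpha_{\ell_i,s}$ already fixed at the previous induction step. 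Since every non-root edge of $G$ has the form $\{\ell_i,f(\ell_i)\}$, the recursion assigns a unique value to every weight, yielding both existence and uniqueness of the solution of \eqref{eq:Sstar}.

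To identify this unique solution with \eqref{eq:solSstar}, I would run the same induction and check the formula step by step. At the leaves, \eqref{eq:solSstar} collapses to $\alpha_{d_i,f(d_i)}=\bar\mu(d_i)$, matching the base case. Assuming \eqref{eq:solSstar} for every son $s\in\mathscr S(\ell_i)$ (each sitting at generation $\ell+1$), one has
\[
\alpha_{s,\ell_i}\;=\;\bar\mu(s) \;+\;\sum_{k_j\in\mathscr D(s)}(-1)^{k-\ell-1}\bar\mu(k_j).
\]
Summing over $s\in\mathscr S(\ell_i)$ and using the partition $\mathscr D(\ell_i)=\mathscr S(\ell_i)\sqcup\bigsqcup_{s\in\mathscr S(\ell_i)}\mathscr D(s)$ together with the fact that each son $s$ sits at generation $\ell+1$ so that $(-1)^{(\ell+1)-\ell-1}=1$, all $\bar\mu$-contributions regroup into the single sum
\[
\sum_{s\in\mathscr S(\ell_i)}\alpha_{s,\ell_i}\;=\;\sum_{k_j\in\mathscr D(\ell_i)}(-1)^{k-\ell-1}\bar\mu(k_j).
\]
Injecting this into the recursion displayed above and using $-(-1)^{k-\ell-1}=(-1)^{k-\ell}$ yields precisely \eqref{eq:solSstar} for $\alpha_{\ell_i,f(\ell_i)}$, closing the induction.

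The substance of the proof is really structural: the tree has $|\mathcal V|-1$ edges and \eqref{eq:Sstar} consists of $|\mathcal V|-1$ linear equations, while the leaf-to-root order makes the system triangular and thus trivially invertible. The only genuine obstacle is bookkeeping, namely matching signs and generation indices carefully when collapsing the inductive sums so that the exponent of $-1$ transits from $k-\ell-1$ at the children to $k-\ell$ at the parent. As a final sanity check, one may verify that the missing equation at the root of \eqref{eq:S} is automatically satisfied whenever $\bar\mu\in\overline{\mathcal W}(G)$: summing \eqref{eq:Sstar} over all non-root nodes, every edge contributes twice except those incident to $r$ which contribute once, and comparing with $\bar\mu(\mathcal V)=1$ forces $\bar\mu(r)=\sum_{j\in\mathcal E(r)}\alpha_{r,j}$.
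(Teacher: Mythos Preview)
Your proof of the lemma is correct and follows essentially the same backward induction from leaves to root as the paper, which merely states the base case and says the result ``follows from a straightforward induction'' using the same recursion you display; you have simply filled in the sign bookkeeping that the paper omits.

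One caveat: your final ``sanity check'' paragraph is extraneous to the lemma (which concerns only \eqref{eq:Sstar}, not the full system \eqref{eq:S}) and the argument there is not quite right. Summing \eqref{eq:Sstar} over all non-root nodes gives $1-\bar\mu(r)=2\sum_e\alpha_e-\sum_{j\in\maE(r)}\alpha_{r,j}$, so deducing the root equation requires knowing $\sum_e\alpha_e=\tfrac12$, which does \emph{not} follow from $\bar\mu(\maV)=1$ alone. Indeed, Proposition~\ref{prop:soltree} shows the root equation holds if and only if the bipartition satisfies $\bar\mu(\maV_1)=\bar\mu(\maV_2)=\tfrac12$; this is what membership in $\overline{\mathcal W}(G)$ actually encodes for a tree, but your summation argument does not capture it.
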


\begin{proof}
Regarding the leaves of $G$, it is immediate that we have 
\[\alpha_{d_i,f(d_i)}=\bar\mu(d_i),\quad i\in \llbracket 1,n_\ell \rrbracket.\]
The result then follows from a straightforward induction, by observing that \eqref{eq:Sstar} is equivalent for such $G$, to 
 \begin{equation*}
\alpha_{\ell_i,f(\ell_i)}=\bar\mu(\ell_i)-\displaystyle\sum_{(\ell+1)_j\in \mathscr S(\ell_i)} \alpha_{\ell_i,(\ell+1)_j},\quad\ell\in\llbracket 1,d \rrbracket,\, i\in \llbracket 1,n_\ell \rrbracket.
\end{equation*}
\end{proof}
\noindent From this, we first deduce the following result,  
\begin{proposition}
\label{prop:soltree}
Let $G=(\maV,\maE)$ be a tree having at least two nodes, and $\bar\mu\in\overline{\maM}(G)$.  
Then, the system (\ref{eq:S}) admits a unique solution in $\overline{\mathcal W}(G)$ if, and only if $\bar\mu(\maV_1)=\bar\mu(\maV_2)={1\over 2}$, where 
$\maV_1\cup \maV_2$ is the bipartition of $\maV$. Otherwise there is no solution to  (\ref{eq:S}). 
 \end{proposition}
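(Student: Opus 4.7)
The plan is to leverage Lemma~\ref{lemma:solrootedtree} by fixing an arbitrary root, thereby reducing (\ref{eq:S}) to (\ref{eq:Sstar}) together with one extra compatibility condition at the root.

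For the necessity, I would sum the equations of (\ref{eq:S}) separately over $\maV_1$ and over $\maV_2$. Because every edge of a tree connects $\maV_1$ to $\maV_2$, each of these sums equals $\sum_{e\in\maE}\alpha_e$, so that $\bar\mu(\maV_1)=\bar\mu(\maV_2)$; since $\bar\mu$ is a probability measure, both must equal $1/2$. In particular, if this bipartite balance fails, the system (\ref{eq:S}) has no solution at all.

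For sufficiency and uniqueness, I would fix a root $r\in\maV_1$ and apply Lemma~\ref{lemma:solrootedtree} to produce the unique candidate $\alpha^{*}$ satisfying every equation of (\ref{eq:S}) except possibly the one at $r$. It only remains to verify that $\alpha^{*}$ automatically fulfils this remaining equation $\bar\mu(r)=\sum_{j\in\maE(r)}\alpha^{*}_{r,j}$ precisely when $\bar\mu(\maV_1)=\bar\mu(\maV_2)$. Substituting formula~(\ref{eq:solSstar}) at each child $1_i$ of $r$, each vertex $v\neq r$ at depth $k$ contributes exactly $(-1)^{k-1}\bar\mu(v)$ to this sum, yielding the telescoped expression $\sum_{v\neq r}(-1)^{d(v,r)-1}\bar\mu(v)$. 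Since depth parity from $r\in\maV_1$ exactly encodes the bipartition (nodes at odd depth lie in $\maV_2$, those at even depth in $\maV_1$), this rewrites as $\bar\mu(\maV_2)-\bigl(\bar\mu(\maV_1)-\bar\mu(r)\bigr)$, which collapses to $\bar\mu(r)$ if and only if $\bar\mu(\maV_1)=\bar\mu(\maV_2)$.

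The main obstacle is the bookkeeping in this telescoping sum: carefully tracking that every descendant appears with the sign dictated by its depth, and matching this parity with the bipartition, is where the balance $\bar\mu(\maV_1)=\bar\mu(\maV_2)=1/2$ re-emerges. Uniqueness is then immediate from Lemma~\ref{lemma:solrootedtree}, since any solution of (\ref{eq:S}) is in particular a solution of (\ref{eq:Sstar}), for which the Lemma already guarantees a single candidate.
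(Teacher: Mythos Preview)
Your proposal is correct and follows essentially the same route as the paper: fix a root, invoke Lemma~\ref{lemma:solrootedtree} to obtain the unique solution of (\ref{eq:Sstar}), then verify that the remaining equation at the root is equivalent to $\bar\mu(\maV_1)=\bar\mu(\maV_2)$ via the alternating-sign telescoping of (\ref{eq:solSstar}). Your additional direct necessity argument (summing (\ref{eq:S}) over each side of the bipartition) is a clean shortcut the paper omits, but the core argument is the same.
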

 
 \begin{proof}
 Fix a root $r$, and let $\maV_1$ and $\maV_2$ be respectively given as the sets of nodes at even and odd generations, respectively. 
 For such $G$, the system (\ref{eq:S}) is equivalent to (\ref{eq:Sstar}) together with the relation 
 \[\bar\mu(r)=\sum\limits_{i=1}^{n_1} \alpha_{r,1_i}.\]
 By connectedness, all nodes of $G$ are descendants of the sons of $r$.  Therefore, from (\ref{eq:solSstar}) we have that 
 \begin{align}
 \bar\mu(r) &=\sum\limits_{i=1}^{n_1} \left(\bar\mu(1_i)+\displaystyle\sum_{k_j\in \mathscr D(1_i)} (-1)^{k-\ell}\bar\mu(k_j)\right)\notag\\
 	   &=\sum_{\substack{k=1;\\ k\tiny{\mbox{ odd}}}}^d \sum\limits_{i=1}^{n_k}\bar\mu(k_i) - \sum_{\substack{k=1;\\ k\tiny{\mbox{ even}}}}^d \sum\limits_{i=1}^{n_k}\bar\mu(k_i),\label{eq:M83}
 \end{align}
 which amounts to $\bar\mu(\maV_1)=\bar\mu(\maV_2)$. Therefore, the system \eqref{eq:S} admits the unique solution given by \eqref{eq:solSstar} if $\bar\mu(\maV_1)=\bar\mu(\maV_2)$, and no solution if not. 
 \end{proof}

\begin{proposition}
\label{prop:solcycle}
Let $G=(\maV,\maE)$ be a cycle, and $\bar\mu\in\overline{\maM}(G)$. Then,
\begin{enumerate}
\item If $G$ is of odd length, there exists a unique solution to \eqref{eq:S}. 
\item If $G$ is of even length, and we denote by $\maV_1\cup \maV_2$ the bipartition of $\maV$, then there are infinitely many solutions if 
$\bar\mu(\maV_1)=\bar\mu(\maV_2)={1\over 2}$; otherwise there is no solution. 
\end{enumerate}
\end{proposition}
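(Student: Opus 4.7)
The plan is to reduce the system \eqref{eq:S} to a single scalar consistency equation by going around the cycle once. First, I would label the vertices of the cycle cyclically as $v_1,\ldots,v_n$ in such a way that the edges are exactly $e_i=\{v_i,v_{i+1}\}$ for $i=1,\ldots,n$ (with $v_{n+1}=v_1$), and introduce $\beta_i:=\alpha_{v_i,v_{i+1}}$ together with the convention $\beta_0:=\beta_n$. Since every vertex of a cycle has exactly two neighbors, \eqref{eq:S} rewrites as the cyclic linear system
\begin{equation*}
\beta_{i-1}+\beta_i \;=\; \bar\mu(v_i),\qquad i=1,\ldots,n.
\end{equation*}

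Next, I would iterate the relation $\beta_i=\bar\mu(v_i)-\beta_{i-1}$, which gives the closed form
\begin{equation*}
\beta_i \;=\; (-1)^i\,\beta_0 + \sum_{j=1}^i (-1)^{i-j}\,\bar\mu(v_j),\qquad i=1,\ldots,n,
\end{equation*}
and imposing the cyclic closure $\beta_n=\beta_0$ collapses the entire system into the single scalar equation
\begin{equation*}
\bigl(1-(-1)^n\bigr)\,\beta_0 \;=\; \sum_{j=1}^n (-1)^{n-j}\,\bar\mu(v_j).
\end{equation*}

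Both assertions then follow from a parity dichotomy. For $n$ odd, the left-hand coefficient equals $2$, so $\beta_0$ (and with it, all the other $\beta_i$) is uniquely determined, which yields (1). For $n$ even, the left-hand side vanishes and the right-hand side rewrites as $\bar\mu(\mathcal V_2)-\bar\mu(\mathcal V_1)$, where $\mathcal V_1$ and $\mathcal V_2$ are the sets of odd-indexed and even-indexed vertices, which form the bipartition of $\mathcal V$. The system is therefore compatible if and only if $\bar\mu(\mathcal V_1)=\bar\mu(\mathcal V_2)$, and since $\bar\mu$ is a probability measure, if and only if $\bar\mu(\mathcal V_1)=\bar\mu(\mathcal V_2)=1/2$. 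Under this balance, $\beta_0$ becomes a free parameter and the solution set is an affine line, yielding infinitely many solutions; otherwise the system is inconsistent. This gives (2).

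The step I expect to require the most care is checking positivity of the tuple $(\beta_i)$ whenever one insists that $\alpha$ lie in $\mathcal M(\mathcal E)$, as implicit in the interpretation of ``solution'' here. For $n$ odd, the unique tuple produced by the formula has strictly positive components precisely when $\bar\mu$ sits in the stability region $\overline{\mathcal N}(G)$ of Theorem \ref{thm:main}, which can be extracted from the explicit alternating-sign formula for $\beta_0$ together with the Hall-type inequalities characterizing that set. For $n$ even under the balance condition, the one-parameter family $(\beta_i+(-1)^i c)_i$ meets the positive orthant on a nondegenerate interval of values of $c$ exactly when $\bar\mu$ lies in the bipartite stability region $\overline{\mathcal N}_{\textsc{B}}(G)$ of Theorem \ref{thm:mainBip}, so the ``infinitely many'' count persists in the admissible regime.
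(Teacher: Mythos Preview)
Your argument is correct and is essentially the same as the paper's: both treat \eqref{eq:S} as an $n\times n$ linear system on the cycle and reduce to a parity dichotomy. The paper writes it in matrix form $A\vec\alpha=\vec\mu$, computes $\det A=1+(-1)^{n+1}$, and notes that for $n$ even the rank is $n-1$ with compatibility condition $\sum_{i\text{ odd}}\bar\mu(i)=\sum_{i\text{ even}}\bar\mu(i)$; your explicit iteration $\beta_i=(-1)^i\beta_0+\sum_{j\le i}(-1)^{i-j}\bar\mu(v_j)$ and the closure equation $(1-(-1)^n)\beta_0=\sum_j(-1)^{n-j}\bar\mu(v_j)$ amount to performing the same Gaussian elimination by hand. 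Your version has the minor advantage of giving the solution formula directly rather than as $A^{-1}\vec\mu$.

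One remark: your final paragraph on positivity is not needed here. In the paper, Proposition~\ref{prop:solcycle} is stated and proved for real solutions of \eqref{eq:S} without any sign constraint (unlike Propositions~\ref{prop:soltree} and~\ref{prop:soltreeplusone}, which explicitly restrict to $\overline{\mathcal W}(G)$), and it is used downstream only through uniqueness, which transfers automatically to any subset. So you may safely drop that discussion.
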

\begin{proof}
Let $n$ be the size of the graph, and index the nodes of $G$ by $1,...,n$. The linear system \eqref{eq:S} of unknown 
$\vec{\alpha}=\begin{pmatrix}\alpha_{1,2}\,\alpha_{2,3}\,\alpha_{3,4}\, \cdots \,\alpha_{n-1,n}\,\alpha_{n,1}\end{pmatrix}^\prime$ 
can be written under the form 
\[A\vec \alpha = \vec \mu,\]
for $\vec{\mu}=\begin{pmatrix}\bar\mu(1)\,\bar\mu(2)\, \cdots \,\bar\mu(n)\end{pmatrix}^\prime$ and 
$A=\begin{pmatrix} 1 & 0 &\cdots  & & & 0 & 1\\
                                1 & 1 & 0 & \cdots & & & 0\\
                                0 & 1 & 1 & 0 & \cdots & &0\\
                                0 & 0 & 1 & 1 & 0 & \cdots & 0\\
                                &\\
                                \vdots  &  &  & \ddots &\ddots & \ddots &          \\
                                0 & 0 & \dots & & 0 & 1 & 1
                                \end{pmatrix}.$
                                
\noindent An immediate computation shows that $\det A = 1+(-1)^{n+1}$. 
Thus the system \eqref{eq:S} admits the unique solution $A^{-1}\vec \mu$ if $n$ is odd. 
If $n$ is even, then it is easily seen that $A$ has rank $n-1$, and that the system is compatible if and only 
if $$\sum_{\substack{i=1;\\ i\tiny{\mbox{ odd}}}}^n \bar\mu(i) =  \sum_{\substack{i=1;\\ i\tiny{\mbox{ even}}}}^n \bar\mu(i)={1\over 2},$$ and in that case there are 
infinitely many solutions.           
\end{proof}

\noindent We deduce the following result,
\begin{proposition}
\label{prop:soltreeplusone}
Let $G=(\maV,\maE)$ be a graph consisting of a tree having at least two nodes, and an additional edge, and let $\bar\mu\in\overline{\maM}(G)$.  
Then, 
\begin{enumerate}
\item If $G$ is a non-bipartite graph (i.e. the additional edge forms a cycle of odd length), then the system (\ref{eq:S}) admits a unique solution.
\item If $G$ is a bipartite graph (i.e. the additional edge forms a cycle of even length), the system (\ref{eq:S}) admits infinitely many solutions in $\overline{\mathcal W}(G)$ if, and only if $\bar\mu(\maV_1)=\bar\mu(\maV_2)={1\over 2}$. Otherwise, there is no solution to (\ref{eq:S}). 
\item If $G$ is the additional edge is a self-loop, the the system (\ref{eq:S}) admits a unique solution.
\end{enumerate} 
 \end{proposition}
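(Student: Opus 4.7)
The plan is to single out the extra edge $e^*\in\maE$ such that $T\isdef(\maV,\maE\setminus\{e^*\})$ is a spanning tree of $G$, treat $\alpha_{e^*}$ as a free parameter $t\in\mathbb R$, and reduce~\eqref{eq:S} on $G$ to the residual system on $T$, to which an immediate extension to signed right-hand sides of Proposition~\ref{prop:soltree} applies. Indeed, upon fixing $\alpha_{e^*}=t$, the remaining equations in~\eqref{eq:S} become exactly~\eqref{eq:S} on the tree $T$ with right-hand side $\bar\mu_t$ obtained from $\bar\mu$ by subtracting the contribution of $e^*$ at its endpoint(s). The argument leading to~\eqref{eq:M83} is purely linear-algebraic, so it yields that the residual system on $T$ admits a (unique) solution if and only if $\bar\mu_t$ has equal mass on both classes of the bipartition of $T$.

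For case (1), write $e^*=(u,v)$ with $u\neq v$: since the cycle that $e^*$ creates in $T$ has odd length, the unique tree path from $u$ to $v$ has even length, so $u$ and $v$ lie in the \emph{same} class, say $\maV_1$, of the bipartition of $T$. The compatibility condition $\bar\mu_t(\maV_1)=\bar\mu_t(\maV_2)$ takes the form
\[\bar\mu(\maV_1)-2t=\bar\mu(\maV_2),\]
which uniquely determines $t$, and the remaining weights on $T$ are then given uniquely by formula~\eqref{eq:solSstar} applied to $\bar\mu_t$. For case (2), the even cycle forces the tree path between $u$ and $v$ to have odd length, so $u$ and $v$ sit in opposite classes $\maV_1$ and $\maV_2$ of the bipartition of $T$ (which coincides with that of $G$); the compatibility condition then reads $\bar\mu(\maV_1)-t=\bar\mu(\maV_2)-t$, so $t$ cancels completely. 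Hence, either $\bar\mu(\maV_1)=\bar\mu(\maV_2)=\frac12$, in which case every real $t$ yields a valid solution and we obtain a full one-parameter family, or the equality fails and no solution exists. For case (3), writing $e^*=(w,w)$ with $w\in\maV_1$ (say) and fixing $\alpha_{w,w}=t$, the modification affects only node $w$, and the compatibility condition becomes $\bar\mu(\maV_1)-t=\bar\mu(\maV_2)$, uniquely pinning down $t$ and hence the rest of the solution.

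The main (mild) obstacle will be the parity bookkeeping in the bipartition of $T$: identifying, in each regime, the parity of the tree distance between the endpoints of $e^*$ (or handling the self-loop separately) and translating it into which class each endpoint belongs to. Once this is settled, cases (1), (2), (3) correspond respectively to the coefficients $2$, $0$, $1$ in front of $t$ in the compatibility equation, which is precisely what produces uniqueness (cases (1) and (3)) versus the one-parameter family conditional on the balance $\bar\mu(\maV_1)=\bar\mu(\maV_2)=\frac12$ (case (2)).
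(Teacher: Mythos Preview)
Your proof is correct and takes a genuinely different route from the paper's. The paper decomposes $G$ into the unique cycle $\mathscr C$ together with the rooted trees hanging off its nodes: it first pins down the weights on the pendant trees via Lemma~\ref{lemma:solrootedtree}, then reduces the problem to the system~\eqref{eq:Sstarstar} on $\mathscr C$ with modified node values $\breve\mu$, and finally invokes Proposition~\ref{prop:solcycle} on cycles to decide uniqueness/compatibility according to the parity of $|\mathscr C|$. You instead remove the extra edge $e^*$ to expose a spanning tree $T$, parametrize $\alpha_{e^*}=t$, and reduce directly to (the linear version of) Proposition~\ref{prop:soltree} with the perturbed vector $\bar\mu_t$; the three cases then fall out immediately from whether the two endpoints of $e^*$ lie in the same or opposite classes of the tree bipartition (or coincide), giving coefficients $2$, $0$, $1$ in front of $t$ in the balance equation. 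Your argument is more elementary in that it bypasses Proposition~\ref{prop:solcycle} altogether and makes the dimension count (one equation in one unknown~$t$, with rank $1$, $0$, or $1$) completely explicit; the paper's decomposition, on the other hand, keeps the cycle structure visible and yields the explicit closed form for~$\alpha$ in case~(3). One small point of hygiene: in case~(3) the sets $\maV_1,\maV_2$ you invoke are the bipartition of the \emph{tree}~$T$, not of $G$ (which has a self-loop and is not bipartite); you use this correctly, but it is worth saying explicitly.
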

\begin{proof}
Adding an edge that is no self-loop to the tree $G$, generates a cycle $\mathscr C$. If the cycle is of even size, then the resulting graph is bipartite, while if the cycle is of odd size we obtain a non-bipartite graph. 
\begin{enumerate}
\item First suppose that the resulting graph $G$ is non-bipartite, and is not reduced to an odd cycle (otherwise, assertion (1) of Proposition \ref{prop:solcycle} applies).  Then $G$ can be seen as the odd cycle $\mathscr C$, to which are appended one or several disconnected rooted trees. More precisely, denote by $N$ the (odd) number of nodes of $\mathscr C$, and by $r^1,...,r^p$ the nodes of $\mathscr C$ that are of degree more than 
then two, and by $s^1,...,s^{N-p}$ the nodes of $\mathscr C$ having degree 2, if any. Then, for any $l\in\llbracket 1,p \rrbracket$, to the node $r^l$ 
is appended a rooted tree $\mathscr T^l$ of root $r^l$. Lemma \ref{lemma:solrootedtree} yields that the weights associated to all edges of 
$\mathscr T^l$ are unique. 
In particular, denoting by $a^l$ and $b^l$ the two neighbors of $r^l$ in $\mathscr C$, 
\eqref{eq:solSstar} implies that 
\begin{equation}
\label{eq:stay}
\bar\mu(r^l)=\alpha_{r^l,a^l}+\alpha_{r^l,b^l}+\sum_{\substack{k=1;\\ k\tiny{\mbox{ odd}}}}^{d^l} \sum\limits_{i=1}^{n^l_k}\bar\mu(k^l_i) -\sum_{\substack{k=1;\\ k\tiny{\mbox{ even}}}}^{d^l} \sum\limits_{i=1}^{n^l_k}\bar\mu(k^l_i),
\end{equation}
using the same notations as in \eqref{eq:M83} for $\mathscr T^l$. 
This is true for any $l\in\llbracket 1,p \rrbracket$, so the restriction to $\mathscr C$ of any solution $\alpha$ to (\ref{eq:S}) solves in particular 
the system 
\begin{equation}
\label{eq:Sstarstar}
\tag{S$^{**}$}
\breve\mu(\ell)=\sum\limits_{j\in\mathcal{E}(\ell)\cap\mathscr C} \alpha_{l,j},\quad \ell\in \mathscr C,
\end{equation}
where 
\[\begin{cases}
\breve\mu(r^l) &=\bar\mu(r^l)-\sum\limits_{\substack{k=1;\\ k\tiny{\mbox{ odd}}}}^{d^l} \sum\limits_{i=1}^{n^l_k}\bar\mu(k^l_i) + \sum\limits_{\substack{k=1;\\ k\tiny{\mbox{ even}}}}^{d^l} \sum\limits_{i=1}^{n^l_k}\bar\mu(k^l_i),\quad l\in\llbracket 1,p \rrbracket,\\
\breve\mu(s^l) &=\bar\mu(s^l),\quad l\in\llbracket 1,N-p \rrbracket.
\end{cases}
\] 
From (1) of Proposition \ref{prop:solcycle}, after normalizing $\breve\mu$ the solution to \eqref{eq:Sstarstar} is unique, and so is the solution to \eqref{eq:S} in view of the uniqueness of the solution to (\ref{eq:Sstar}) on each tree $\mathscr T^l$, $l\in\llbracket 1,p \rrbracket$, from Lemma \ref{lemma:solrootedtree}. 
\item Suppose that the resulting graph $G$ is bipartite, and is not reduced to an even cycle (otherwise assertion (2) of Proposition 
\ref{prop:solcycle} applies). 
Applying the same construction as in case (1), we obtain again \eqref{eq:stay} for all $l$. 
Assertion (2) of Proposition \ref{prop:solcycle} shows that the resulting system 
\eqref{eq:Sstarstar} on $\mathscr C$ admits no solution unless $\breve\mu(\maV_1)=\breve\mu(\maV_2)$. But this is equivalent to 
$\bar\mu(\maV_1)=\bar\mu(\maV_2)$, as in each rooted tree $\mathscr T^l$ of $G$, the nodes of the even generations belong to the same subset of the bipartition as $r^l$. So under that condition, \eqref{eq:Sstarstar} and thereby 
\eqref{eq:S}, admit infinitely many solutions.
\item If the additional edge is a self-loop on node $r$, then $G$ can be seen as a rooted tree of root $r$. 
Then, it is immediate in view of Lemma \ref{lemma:solrootedtree} that \eqref{eq:S} admits a unique solution, that from \eqref{eq:solSstar}, 
is given by 
\begin{equation*}
\begin{cases}
\alpha_{\ell_i,f(\ell_i)} &=\bar\mu(\ell_i)+\displaystyle\sum_{k_j\in \mathscr D(\ell_i)} (-1)^{k-\ell}\bar\mu(k_j),\quad\ell\in\llbracket 1,d \rrbracket,\, i\in \llbracket 1,n_\ell \rrbracket,\\
\alpha_{r,r} &= \bar\mu(r)-\sum\limits_{\substack{k=1;\\ k\tiny{\mbox{ odd}}}}^{d} \sum\limits_{i=1}^{n_k}\bar\mu(k_i) + \sum\limits_{\substack{k=1;\\ k\tiny{\mbox{ even}}}}^{d} \sum\limits_{i=1}^{n_k}\bar\mu(k_i),
\end{cases}
\end{equation*}
which completes the proof.
\end{enumerate}
\end{proof}

Let us now come back to the representation of solutions of \eqref{eq:S} as asymptotic matching rates for corresponding matching models. 
\begin{definition}
Let $G=(\maV,\maE)$ be a multigraph (resp., a bipartite graph). We say that the matching rates are {\em policy-invariant} for $G$, if 
for any $\mu\in{\mathcal N}(\maV)$ (resp., any $\mu\in{\mathcal N}_{\tiny{\textsc{b}}}(\maV)$), the asymptotic matching rates $\Theta^{\Phi,\bar\mu}$ (resp., $\Theta^{\Phi,\tilde\mu}_{\scriptsize{\textsc{b}}}$) in the matching model $(G,\bar\mu,\Phi)$ (resp., $(G,\tilde\mu,\Phi)$) do not depend on $\Phi$ as long as $\bar\mu\in\textsc{Stab}(G,\Phi)$ (resp., $\tilde\mu\in\textsc{Stab}_{\tiny{\textsc{b}}}(G,\Phi)$). 
\end{definition}
Recalling equations \eqref{eq:defTheta} and \eqref{eq:defThetaBip}, and gathering Propositions \ref{prop:soltree}, \ref{prop:solcycle} and \ref{prop:soltreeplusone}, we readily obtain the following result, 

\begin{theorem}
Matching rates are policy-invariant for $G$ in the following cases:
\begin{itemize}
\item $G$ is an odd cycle,
\item $G$ is a tree,
\item $G$ a tree with an additional edge forming an odd cycle, 
\item $G$ is a tree with an additional self-loop. 
\end{itemize}
\end{theorem}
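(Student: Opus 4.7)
The plan is to observe that in all four listed cases the matching rates, viewed as weights on the edges of $G$, are uniquely determined by the arrival distribution through a linear system whose coefficients do not depend on $\Phi$; policy-invariance then follows immediately.

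First, I would fix an admissible policy $\Phi$ for which the corresponding matching model is stable. In the three cases where $G$ is non-bipartite --- the odd cycle, the tree with an added edge forming an odd cycle, and the tree with an added self-loop --- Lemma \ref{lemma : taux d'arrivee = taux de depart des i} together with the definition \eqref{eq:defalphamu} shows that the family of weights
\[\alpha^{\Phi,\bar\mu}((i,j))\isdef\bigl(1+\mathbf 1_{\{i=j\}}\bigr)\Theta^{\Phi,\bar\mu}[i,j],\quad(i,j)\in\maE,\]
solves system \eqref{eq:S} driven by $\bar\mu$. In the bipartite tree case, Lemma \ref{lemma : taux d'arrivee = taux de depart des i (cas biparti)} together with \eqref{eq:defalphamuBip} shows that $\alpha^{\Phi,\tilde\mu}_{\textsc{b}}=\Theta_{\textsc{b}}^{\Phi,\tilde\mu}$ solves the coupled pair \eqref{eq:defThetaBip}. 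After merging these two halves into a single scalar system, one obtains an instance of \eqref{eq:S} driven by the (unnormalized) measure $\mu$ on $\maV=\maV_1\cup\maV_2$ defined by $\mu|_{\maV_k}=\tilde\mu_k$ for $k\in\{1,2\}$. Since arrivals in the extended bipartite model come in pairs with one element on each side, we automatically have $\mu(\maV_1)=\mu(\maV_2)=1$, and the normalization $\bar\mu=\mu/2$ satisfies the balance condition $\bar\mu(\maV_1)=\bar\mu(\maV_2)=\tfrac12$.

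Second, I would invoke the uniqueness statements of the preceding propositions case by case. The odd cycle is treated by Proposition \ref{prop:solcycle}(1); the tree, thanks to the balance condition just established, by Proposition \ref{prop:soltree}; the tree with an added edge forming an odd cycle by Proposition \ref{prop:soltreeplusone}(1); and the tree with an added self-loop by Proposition \ref{prop:soltreeplusone}(3). In each of these four cases, the solution of \eqref{eq:S} in $\overline{\mathcal W}(G)$ is unique, so the weights $\alpha^{\Phi,\bar\mu}$ (resp.\ $\alpha^{\Phi,\tilde\mu}_{\textsc{b}}$) are determined entirely by $\bar\mu$ (resp.\ $\tilde\mu$), and therefore coincide across all admissible stabilizing policies $\Phi$. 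Inverting the defining relations \eqref{eq:defalphamu} and \eqref{eq:defalphamuBip} then shows that the asymptotic matching rates $\Theta^{\Phi,\bar\mu}$ and $\Theta_{\textsc{b}}^{\Phi,\tilde\mu}$ are themselves independent of $\Phi$, which is precisely policy-invariance.

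The only mildly delicate step is the bipartite tree case: one has to recognize that the pair of balance equations \eqref{eq:defThetaBip} for $\tilde\mu_1$ and $\tilde\mu_2$ reassembles into the scalar form \eqref{eq:S} on the full vertex set $\maV$, and that the condition $\bar\mu(\maV_1)=\bar\mu(\maV_2)=\tfrac12$ required by Proposition \ref{prop:soltree} is automatically supplied by the paired-arrivals mechanism. Once this translation is made explicit, no further work is needed: the combinatorial heavy lifting has already been done inside Propositions \ref{prop:soltree}, \ref{prop:solcycle} and \ref{prop:soltreeplusone}, and the theorem is obtained as a direct consequence.
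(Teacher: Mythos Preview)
Your proposal is correct and follows exactly the approach of the paper, which simply states that the theorem is obtained by recalling equations \eqref{eq:defTheta} and \eqref{eq:defThetaBip} and gathering Propositions \ref{prop:soltree}, \ref{prop:solcycle} and \ref{prop:soltreeplusone}. You have spelled out in more detail than the paper how these pieces fit together, including the observation that in the bipartite tree case the balance condition $\bar\mu(\maV_1)=\bar\mu(\maV_2)=\tfrac12$ is automatic, but the underlying argument is the same.
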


\subsection*{Acknowledgment} The authors would like to warmly thank Emmanuel Jeandel, for suggesting the use of Farkas Lemma to show the implication $(iv)\implies (ii)$ in Theorem \ref{thm:main}.

\end{document}